\title{The Twisted K\"ahler-Ricci Flow}
\theoremstyle{plain}
\newtheorem{thm}{Theorem}
\newtheorem{prop}{Proposition}[section]
\newtheorem{defn}{Definition}[section]
\newtheorem{lem}{Lemma}[section]
\newtheorem{cor}{Corollary}
\theoremstyle{remark}
\newtheorem*{rem}{Remark}
\newcommand{\Tr}{\textrm{Tr}}
\newcommand{\del}{\partial} 
\newcommand{\dbar}{\overline{\del}}
\newcommand{\ddb}{i\del\dbar}
\newcommand{\osc}{\mathrm{osc}\,}
\newcommand{\ddt}{\frac{\del}{\del t}}
\newcommand{\R}{\mathbb{R}}
\newcommand{\W}{\mathcal{W}}
\renewcommand{\leq}{\leqslant}
\renewcommand{\geq}{\geqslant}
\author{Tristan C. Collins}
\address{Department of Mathematics, Columbia University, New York, NY}
\email{tcollins@math.columbia.edu}
\author{G\'abor Sz\'ekelyhidi}
\address{Department of Mathematics, University of Notre Dame, South Bend, IN}
\email{gszekely@nd.edu}
\begin{document}

\begin{abstract}
	In this paper we study a generalization of the K\"ahler-Ricci flow,
	in which the Ricci form is twisted by a closed, non-negative 
	(1,1)-form. We show that when a twisted K\"ahler-Einstein metric
	exists, then this twisted flow converges exponentially.  This
	generalizes a result of Perelman on the convergence of
	the K\"ahler-Ricci flow, and it builds on work of Tian-Zhu. 
\end{abstract}

\maketitle 
\section{Introduction}
The K\"ahler-Ricci flow, introduced by Hamilton \cite{Ham} has been studied
extensively in recent years. 
In this paper we study a generalization of the K\"ahler-Ricci flow, which
we call the \emph{twisted} K\"ahler-Ricci flow.  Fix a compact K\"ahler
manifold $(M,J)$.
\begin{defn}
Let $\alpha$ be a closed, non-negative $(1,1)$-form on $(M,J)$,
and suppose that $2\pi c_{1}(M)-\alpha >0$ is a K\"ahler class.  Fix
$\omega_{0} \in  2\pi c_{1}(M)-\alpha$.  The \emph{normalized} twisted
K\"ahler-Ricci flow is the
evolution equation
\begin{equation}\label{eq:twistedKRF}
	\ddt \omega = \omega + \alpha - \mathrm{Ric}(\omega),
\end{equation}
with $\omega(0)=\omega_0$ at $t=0$. 
\end{defn}

The normalized twisted K\"ahler-Ricci flow preserves the cohomology class
of $\omega$, and the short and long-time existence results follow from the
standard arguments for the K\"ahler-Ricci flow on a Fano manifold 
(see e.g. Cao \cite{Cao}, or the book \cite{C}).  Below we will also consider an unnormalized version of the flow. 
When  $\alpha=0$, both of these flows reduce to the usual
K\"ahler-Ricci flow on the underlying Fano manifold $(M,J)$. Our main
object of study in this paper is the convergence of the flow
\eqref{eq:twistedKRF} when a solution of the equation
\begin{equation}\label{eq:tKEmetric}
	\mathrm{Ric}(\omega) = \omega + \alpha
\end{equation}
exists. Solutions of this equation are called twisted K\"ahler-Einstein
metrics, and they arise in various settings, for instance in
Fine \cite{Fine} and Song-Tian \cite{SongTian}.  Of particular interest recently
has been the generalization where $\alpha$ is a multiple of the
current of integration along a divisor, in relation with K\"ahler-Einstein
metrics which have conical singularities (see for example Donaldson
\cite{Don}, Jeffres-Mazzeo-Rubinstein \cite{JMR}). In this paper, however, 
we will focus on the case when $\alpha$ is a smooth form. 

Our main result is the following.
\begin{thm}\label{thm:main}
	Suppose that there is a solution $\omega$ of
	equation~\eqref{eq:tKEmetric}. Then for any $\omega_0\in [
		\omega]$, the flow \eqref{eq:twistedKRF} with initial
		metric $\omega_0$ converges
		exponentially fast to a (perhaps different) solution of
		\eqref{eq:tKEmetric}. 
\end{thm}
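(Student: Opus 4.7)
The plan is to follow the approach of Perelman--Tian--Zhu for the classical K\"ahler-Ricci flow, checking at each step that the non-negative twist $\alpha$ does not obstruct---and in places assists---the argument.

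First, I would recast the flow as a parabolic complex Monge-Amp\`ere equation. Let $\omega_*$ be a solution of \eqref{eq:tKEmetric}, which exists by hypothesis. Since $[\omega(t)] = [\omega_*]$ is preserved along \eqref{eq:twistedKRF}, one may write $\omega(t) = \omega_* + \ddb\varphi(t)$, and after absorbing a time-dependent constant the flow is equivalent to
\[
\ddt\varphi = \log\frac{(\omega_* + \ddb\varphi)^n}{\omega_*^n} + \varphi.
\]
Setting $u = \ddt\varphi$, differentiation gives $\ddt u = \Delta_{\omega(t)} u + u$, which will be useful in the asymptotic analysis.

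Second, I would develop twisted analogues of Perelman's a priori estimates. The idea is to modify Perelman's $\W$-entropy by replacing $\mathrm{Ric}(\omega)$ with $\mathrm{Ric}(\omega) - \alpha$; because $\alpha \geq 0$ and closed, the standard monotonicity computation adapts, yielding uniform control along the flow of (i) the twisted scalar curvature $R(\omega) - \mathrm{tr}_\omega\alpha$, (ii) the diameter of $(M,\omega(t))$, and (iii) a non-collapsing constant. These are the twisted versions of the three pillars used by Tian-Zhu.

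Third, with these estimates in hand, I would derive a uniform $C^0$ bound for $\varphi$ and $\ddt\varphi$ following Tian-Zhu. The existence of a solution to \eqref{eq:tKEmetric} should imply a properness/coercivity property for a twisted Ding or Mabuchi functional, and combined with the twisted Perelman estimates this should yield $\|\varphi(t)\|_{C^0} \leq C$. Standard parabolic bootstrapping then supplies $C^\infty$-bounds, so any sequence $t_k \to \infty$ admits a subsequence along which $\omega(t_k)$ converges smoothly to some $\omega_\infty$; monotonicity of the twisted $\W$-functional forces $\ddt\varphi \to 0$, so $\omega_\infty$ solves \eqref{eq:tKEmetric}.

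Finally, to upgrade subsequential smooth convergence to \emph{exponential} convergence, I would linearize at $\omega_\infty$. The linearized equation for $u = \ddt\varphi$ is $\ddt v = \Delta_{\omega_\infty} v + v$, whose long-time behavior is governed by a twisted Lichnerowicz-type operator. The strict positivity contributed by $\alpha$ rules out the usual zero modes arising from holomorphic vector fields, so one obtains a positive spectral gap; a \L ojasiewicz-type argument (or direct spectral decay coupled with the uniform higher-order bounds) then yields exponential convergence, first at the linearized level and, by a standard interpolation once the flow is sufficiently close to $\omega_\infty$, at the full nonlinear level.

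The step I expect to be the main obstacle is the uniform $C^0$ bound on $\varphi$: while the Perelman-type geometric estimates adapt cleanly thanks to the sign of $\alpha$, converting them into a potential bound requires a twisted analogue of Tian-Zhu's Moser-Trudinger / Harnack argument, and it is at this point that the interaction of $\alpha$ with the available functional inequalities must be handled with care.
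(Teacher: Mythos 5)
Your proposal has a genuine gap at exactly the step you flag as the main obstacle, and the way you expect to resolve it does not work.

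You assert that ``the existence of a solution to \eqref{eq:tKEmetric} should imply a properness/coercivity property for a twisted Ding or Mabuchi functional.'' This is false in general: the existence of a (twisted) K\"ahler--Einstein metric only implies that the (twisted) Mabuchi energy is \emph{bounded below} (this is the content of the Bando--Mabuchi / Berndtsson uniqueness theorem, used via Theorem~\ref{MabuchiLB}), not that it is proper. Properness fails whenever the group $G$ of automorphisms preserving $\alpha$ is nontrivial, because the twisted KE metric is only unique modulo $G$ and the Mabuchi energy is $G$-invariant. In particular, to recover Perelman's theorem (the case $\alpha=0$) your argument would have to cover Fano manifolds such as $\mathbb{CP}^n$ with large automorphism groups, where the Ding and Mabuchi functionals are manifestly not proper. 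Consequently there is no reason to expect a uniform $C^0$ bound on the normalized potential $\varphi(t)$; the metrics can (and in general do) drift by a family of automorphisms along the flow. The same issue sinks your exponential-convergence step: $\alpha$ is only assumed closed and non-negative, so it does \emph{not} kill the zero modes of the Lichnerowicz operator coming from holomorphic vector fields in $\mathrm{Lie}(G)$, and there is no spectral gap at the linearized level.

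The paper circumvents this by never attempting a direct $C^0$ bound on $\varphi$. Instead it quotients out by $G$: one measures distance to the twisted KE metric via $d(g) = \inf_{\tau\in G}\osc\phi_\tau$, and proves convergence by a \emph{method of continuity in the initial metric}. One shows that the set $S$ of initial metrics in $[\omega_{tKE}]$ for which the flow converges is both open and closed in $C^5$, using three ingredients: the monotone convergence of the twisted entropy $\mu(g(t),\tfrac12)$ to a fixed topological constant $\Lambda$ (Corollary~\ref{cor:mu max}), the rigidity $\mu(g)=\Lambda\Leftrightarrow g$ is twisted KE, and a ``no escape'' lemma (Proposition~\ref{prop:no escape}) showing that $\mu(g)$ close to $\Lambda$ forces $d(g)$ small. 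Exponential convergence then follows from the Phong--Song--Sturm--Weinkove argument once $d(g(t))$ stays uniformly bounded, with the relevant $\overline{\partial}$-eigenvalue bounded away from zero along the flow (not because $\alpha$ removes the kernel, but because the eigenvalue is $G$-invariant and uniformly controlled once the metrics are uniformly bounded modulo $G$). If you want to salvage your direct approach you would need to carry out the entire argument modulo the $G$-action from the start, which is essentially what the paper's continuity method does.
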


Similar results can be proved in the case when 
$c_1(M) - \alpha\leqslant 0$, but this is essentially 
contained in the work of Cao \cite{Cao}.
In the case when $\alpha=0$, our main theorem reduces to an unpublished
result of Perelman.  Namely, we obtain as a corollary;

\begin{cor}
Suppose the Fano manifold $(M, J)$ admits a K\"ahler-Einstein metric.  Then for any $\omega_{0} \in c_{1}(M)$,
the K\"ahler-Ricci flow with initial metric $\omega_{0}$ converges exponentially fast to a K\"ahler-Einstein metric.
\end{cor}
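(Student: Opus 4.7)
\emph{Plan.} The corollary is set up to be a direct consequence of Theorem~\ref{thm:main}, so my strategy is simply to apply that theorem with the choice $\alpha\equiv 0$ and verify that all hypotheses are met. With $\alpha = 0$ the twisted Kähler--Einstein equation \eqref{eq:tKEmetric} reduces to the ordinary Kähler--Einstein equation $\mathrm{Ric}(\omega)=\omega$, and the twisted flow \eqref{eq:twistedKRF} specializes to $\ddt\omega=\omega-\mathrm{Ric}(\omega)$, which is precisely the normalized Kähler--Ricci flow on a Fano manifold.

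Next I would check the hypotheses of the definition and of Theorem~\ref{thm:main}. The zero form is trivially closed and non-negative, and since $(M,J)$ is Fano, the class $2\pi c_{1}(M)-\alpha=2\pi c_{1}(M)$ is Kähler, so the flow is well defined. By assumption there exists a Kähler--Einstein metric $\omega_{KE}$, which is a solution of \eqref{eq:tKEmetric} with $\alpha=0$. Any initial metric in $c_{1}(M)$ lies, after the harmless rescaling that reconciles the normalization $c_{1}(M)$ used in the corollary with the convention $2\pi c_{1}(M)$ of the paper, in the cohomology class $[\omega_{KE}]$. Theorem~\ref{thm:main} then gives exponential convergence of the flow to some (possibly different) solution of $\mathrm{Ric}(\omega)=\omega$, which is the desired Kähler--Einstein metric.

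The only genuine subtlety is the matter of normalizations. Since the paper normalizes the flow to have stationary cohomology class $2\pi c_{1}(M)$ while the corollary refers to representatives of $c_{1}(M)$, I would explicitly indicate the scaling $\omega \mapsto 2\pi\omega$ (together with the induced rescaling of time $t$ absorbed into the flow equation) that transforms one convention to the other, noting that exponential convergence is invariant under this rescaling. Beyond this bookkeeping there is no real obstacle: the hard analytic content is entirely in Theorem~\ref{thm:main}, and the corollary is a pure specialization.
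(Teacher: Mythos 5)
Your proof is correct and follows exactly the route the paper intends: the corollary is the specialization $\alpha=0$ of Theorem~\ref{thm:main}, and the paper states this explicitly ("In the case when $\alpha=0$, our main theorem reduces to an unpublished result of Perelman"). Your remark about reconciling the $c_1(M)$ versus $2\pi c_1(M)$ normalization by a harmless rescaling is a reasonable piece of bookkeeping that the paper leaves implicit.
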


This theorem has been addressed several times in the literature, most
notably by Tian-Zhu and collaborators (see \cite{TZ, TZ3, TZZZ}). 
Our approach in this paper is based on the ideas in \cite{TZ},
in particular we make strong use of the result of Tian-Zhu \cite{TZ} that
Perelman's entropy functional increases to a fixed topological constant
along the K\"ahler-Ricci flow.  

The first step in the proof of Theorem~\ref{thm:main} is an
extension of Perelman's estimates~\cite{ST} to the twisted
flow.  For our later applications, we require uniform control of the constants appearing
Perelman's estimates for a family of twisted K\"ahler-Ricci flows with initial metrics lying 
in a bounded family in $C^{3}$.  This requires us to reformulate the arguments in  \cite{ST} 
in order to obtain effective bounds. The presence of the 
extra form $\alpha$ causes little difficulty, although at various points it is important that $\alpha$ is
closed and non-negative.  In addition, we extend to the twisted case the uniform Sobolev inequality along 
the K\"ahler-Ricci flow, proved by Zhang \cite{QZ}.  These developments appear in 
Sections~\ref{sec: w func} and~\ref{sec: Perelman}.  With these results established, we show in 
Section~\ref{sec: Mabuchi} that Perelman's entropy functional increases along the twisted KRF to a
fixed topological constant, extending a result of Tian-Zhu \cite{TZ} to the
twisted setting.  

Finally,
in Section~\ref{sec: convergence} we prove Theorem~\ref{thm:main}.  The proof 
is by a method of continuity argument for the initial metric, similar to
the method  of Tian-Zhu in \cite{TZ}.  
The main difference between the two
approaches is the use of different norms to measure the distance from a
metric to a K\"ahler-Einstein metric. In \cite{TZ}, the distance between
two metrics is measured by setting
\begin{equation*}
||g-g'||_{C^{\ell}(M)} = \inf_{\Phi}|g- \Phi^{*}(g')|_{C^{\ell}(M)},
\end{equation*}
where the norm on the right hand side is computed with respect to a
fixed metric, and the infimum runs over all diffeomorphisms of $M$.  
In this paper, we measure the distance
between  an evolving metric and a twisted K\"ahler-Einstein metric, using
instead a $C^{0}$-norm on K\"ahler potentials.  More precisely, fixing a
twisted K\"ahler-Einstein metric $g_{tKE}$, we can write $\tau^*g = g_{tKE}
+ i\del\overline{\del}\phi_{\tau}$ for any biholomorphism $\tau$ of
$(M,J)$ which fixes $\alpha$.  Then, we set
\begin{equation*}
d(g) = \inf_{\tau} \text{osc} \phi_{\tau}.
\end{equation*}
We believe that using this norm
makes some of the arguments more transparent.
Moreover, working
with K\"ahler potentials allows us to avoid using a
result analogous to Chen-Sun's generalized uniqueness theorem ~\cite{CS}
similarly to Tian-Zhang-Zhang-Zhu~\cite{TZZZ}.
Instead, we only need the extension of Bando-Mabuchi's result \cite{BM} to
the twisted case, which was given by Berndtsson \cite{Ber}.

Before proceeding, we make a note about conventions.  In Section~\ref{sec:
w func} we work in the Riemannian setting. 
We take this approach, since our results hold in the case of the \emph{real} Ricci flow, twisted by a $(0,2)$ tensor satisfying
a ``contracted Bianchi identity".  In particular, all geometric quantities are the \emph{Riemannian} quantities.  In all subsequent sections, we work in complex coordinates, with the corresponding complex quantities.

\subsection*{Acknowledgements}
We would like to thank Professor D. H. Phong for his interest in this work
and his encouragement. We are also grateful to Valentino Tosatti and Ben
Weinkove for helpful comments, and for pointing out several typos in a previous
version of this paper .

\section{The Twisted $\W$-functional}\label{sec: w func}

In this section we introduce the twisted analog of some of Perelman's
functionals.  These functionals will play a crucial role in our later
estimates. For this section only, we will consider the 
\emph{unnormalized} twisted
K\"ahler-Ricci flow, which is the evolution equation
\begin{equation}\label{eq:unnorm}
\ddt \omega = -2(\mathrm{Ric}(\omega) -\alpha).
\end{equation}
This will alow us to use calculations in the existing literature more
readily. Note that if $\omega(t)$ is a solution of the normalized flow
\eqref{eq:twistedKRF}, then $\tilde{\omega}(t) = (1-2t)\omega(-\log(1-2t))$ is a
solution of the unnormalized flow with the same initial condition. 
In particular in our situation the existence time of the flow
\eqref{eq:unnorm} is $t\in[0,\frac{1}{2})$. 

\begin{defn}
Let $(M,g,J)$ be a compact K\"ahler manifold of complex dimension $n$, and
let $\alpha$ be a closed, non-negative $(1,1)$-form.  Let $\beta:= \alpha(\cdot, J\cdot)$ be
the induced, symmetric, non-negative $(0,2)$ tensor.  Define the twisted
entropy functional $\W: \mathfrak{Met} \times C^{\infty}(\R) \times \R_{>0}
\rightarrow \R$ by
\begin{equation*}
\W(g,f,\tau) := (4 \pi \tau)^{-n} \int_{M} \left( \tau(R-\Tr_{g}\beta + |\nabla f|^{2}) + (f-2n)\right)e^{-f}dm
\end{equation*}
where $dm = \sqrt{\det g}$ denotes the Riemannian volume form of $g$, and all quantities are the real quantities.
\end{defn}

\begin{thm}\label{W monotonicity}
Suppose the $(g(t), f(t), \tau(t)) \in \mathfrak{Met} \times C^{\infty}(\R) \times \R_{>0}$ solves the coupled system of partial differential equations
\begin{equation}\label{tKRF}
\ddt g = -2(Ric(g)- \beta)
\end{equation}
\begin{equation}\label{f evolution}
\ddt f = -\Delta_{g} f + |\nabla f |_{g}^{2} -R(g) + \Tr_{g}\beta + \frac{n}{\tau},
\end{equation}
\begin{equation}\label{tau evolution}
\frac{d}{dt} \tau = -1,
\end{equation}
on the interval $[0,T]$. Then, $\W(t) := \W(g(t),f(t),\tau(t))$ satisfies
\begin{equation*}
\begin{aligned}
\frac{d}{dt} \W(t) &= \\ &2\tau
\int_{M} \left( \left|Ric(g) +\nabla\nabla f-\beta
-\frac{g}{2\tau}\right|^{2}_{g} + \beta(\nabla f, \nabla f) \right) (4\pi\tau)^{-n}e^{-f} dm.
\end{aligned}
\end{equation*}
In particular, $\W(g(t),f(t),\tau(t))$ is monotonically increasing in $t$. 
\end{thm}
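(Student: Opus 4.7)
The plan is to mimic Perelman's classical monotonicity calculation for the $\W$-functional, carefully accounting for the extra contributions of $\beta$ and exploiting the twin facts that $\alpha$ is closed (so $\beta$ satisfies a contracted Bianchi-type identity) and non-negative. The prescribed evolutions of $f$ and $\tau$ are chosen precisely so that the reference measure $(4\pi\tau)^{-n}e^{-f}\,dm$ has constant total mass and so that, after integration by parts, $d\W/dt$ rearranges into a perfect square plus a non-negative residual.

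\textbf{Step 1: basic evolutions.} First I would compute: $\ddt dm = -(R - \Tr_g\beta)\,dm$ from $\ddt\log\sqrt{\det g} = \frac{1}{2}g^{ij}\ddt g_{ij}$; $\ddt \Tr_g\beta = 2\langle\mathrm{Ric}-\beta,\beta\rangle$, since $\alpha$ is $t$-independent and $\ddt g^{ij} = 2(\mathrm{Ric}^{ij} - \beta^{ij})$; and, using the general variation formula for scalar curvature applied to $h=-2(\mathrm{Ric}-\beta)$ together with the contracted second Bianchi identities for both $\mathrm{Ric}$ (automatic) and $\beta$ (the identity $\nabla^i\beta_{ij} = \nabla_j\Tr_g\beta$, valid for the symmetric tensor associated to a closed $(1,1)$-form on a K\"ahler manifold), $\ddt R = \Delta R + 2|\mathrm{Ric}|^2 - 2\langle\mathrm{Ric},\beta\rangle$. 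Combining these with the given evolution of $f$, one also obtains $\ddt((4\pi\tau)^{-n}e^{-f}\,dm) = (\Delta f - |\nabla f|^2)(4\pi\tau)^{-n}e^{-f}\,dm$, confirming that the reference measure is preserved up to a divergence.

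\textbf{Step 2: differentiation and collection.} Next I would differentiate each of the four pieces of the integrand of $\W$, namely $\tau R$, $-\tau\Tr_g\beta$, $\tau|\nabla f|^2$, and $(f-2n)$, also accounting for $d\tau/dt=-1$ and the derivative of the prefactor $(4\pi\tau)^{-n}$. Substituting the evolutions from Step 1 and integrating by parts against $e^{-f}$ via $\int(\Delta u)e^{-f}\,dm = \int\langle\nabla u,\nabla f\rangle e^{-f}\,dm$ converts every Laplacian into a gradient pairing, so that the resulting integrand involves only curvature, first derivatives of $f$, and the tensor $\beta$.

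\textbf{Step 3: completing the square.} Finally, I would compare the assembled integrand to the expansion of $|\mathrm{Ric} + \nabla\nabla f - \beta - \frac{g}{2\tau}|^2$. The only subtle piece is the cross term $-2\langle\nabla\nabla f,\beta\rangle$; when integrated by parts against $e^{-f}$ — using the Bianchi identity $\nabla^i\beta_{ij} = \nabla_j\Tr_g\beta$ to handle the $\mathrm{div}\,\beta$ that arises — it leaves behind precisely the residual $\beta(\nabla f,\nabla f)$ that appears in the statement. Since $\beta\geq 0$, this residual is non-negative and monotonicity follows. The main obstacle is the bookkeeping: all $\beta$-dependent contributions from $\ddt R$, $\ddt\Tr_g\beta$, the derivative of $-\tau\Tr_g\beta$, and the integration by parts of the cross term must combine in exactly the right proportions to yield the clean square-plus-residual form, and this is what forces the specific choice of $-\beta$ (rather than $+\beta$ or $2\beta$) inside the square and relies essentially on the closedness of $\alpha$.
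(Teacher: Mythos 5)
Your plan is essentially the same as the paper's: adapt Perelman's classical $\W$-monotonicity computation, keep track of the $\beta$-contributions, exploit the contracted-Bianchi-type identity satisfied by $\beta$ (which is where closedness of $\alpha$ enters), and observe that the cross term involving $\nabla\nabla f$ and $\beta$ integrates by parts to leave the residual $\beta(\nabla f,\nabla f)$, non-negative because $\alpha\geq 0$. The only structural difference is that the paper quotes the general variational formula for $\W$ from the literature and then substitutes the evolution equations, whereas you propose to rederive the individual evolution equations from scratch; both routes are fine. Two small factual points to watch in Step~1: the Bianchi-type identity for the symmetric tensor $\beta$ associated to a closed $(1,1)$-form carries a factor of 2 (it is $\nabla_j\Tr_g\beta = 2\,\nabla^i\beta_{ij}$, exactly paralleling $\nabla_j R = 2\nabla^i R_{ij}$ for Ricci), not $\nabla^i\beta_{ij}=\nabla_j\Tr_g\beta$ as you wrote; and with the correct coefficient the scalar-curvature evolution is $\ddt R = \Delta(R-\Tr_g\beta) + 2|\mathrm{Ric}|^2 - 2\langle\mathrm{Ric},\beta\rangle$, so your formula is missing the $-\Delta\Tr_g\beta$. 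These combine nicely with $\ddt \Tr_g\beta = 2\langle\mathrm{Ric}-\beta,\beta\rangle$ to give the clean equation $\ddt(R-\Tr_g\beta)=\Delta(R-\Tr_g\beta)+2|\mathrm{Ric}-\beta|^2$, which is worth isolating since it is the exact analogue of the Ricci-flow scalar-curvature evolution and makes the bookkeeping transparent. Once these coefficients are corrected, the argument goes through exactly as you describe and matches the paper.
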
 

\begin{proof}
For the proof, we work in \emph{real} coordinates.  From now on, we shall suppress the dependence on $g$, with the understanding that all Laplacians, curvatures, traces and inner products are computed with respect to $g$, unless otherwise noted.  Denote by $\W(t) := \W(g(t), f(t), \tau(t))$.  Following the computation of the variational formula for Perelman's entropy functional \cite{CZ, C} we find the variational formula for the twisted entropy functional is given by
\begin{equation}\label{ddtW}
\begin{aligned}
(4\pi\tau)^{n}&\frac{d}{dt} \W(t) \\&= \int_{M}-\tau\left\langle \ddt g, Ric - \beta +\nabla \nabla f -\frac{1}{2\tau}g\right\rangle e^{-f}dm\\
& + \int_{M} \left(\frac{1}{2}\Tr \ddt g - \ddt f +\frac{n}{\tau} \right)
	\Big[ \tau(R - \Tr \beta + 2 \Delta f - |\nabla f|^{2}) \\
&\qquad\qquad\qquad\qquad\qquad\qquad\qquad + f - 2n-1\Big]
		e^{-f}dm \\
& - \int_{M} \left( R -\Tr \beta+ |\nabla f|^{2} - \frac{n}{\tau} \right)
e^{-f}dm
\end{aligned}
\end{equation}
Plugging in the evolution equations~\eqref{tKRF},~\eqref{f evolution},~\eqref{tau evolution}, we obtain 
\begin{equation*}
\begin{aligned}
(4\pi\tau)^{n}&\frac{d}{dt} \W(t) \\
&= \int_{M}-2\tau\left\langle Ric-\beta, Ric - \beta +\nabla \nabla f -\frac{1}{2\tau}g\right\rangle e^{-f}dm\\
& + \int_{M}(\Delta f - |\nabla f|^{2})(\tau(R- \Tr \beta + 2\Delta f - |\nabla f|^{2}) + f] e^{-f}dm \\
& - \int_{M} \left( R -\Tr \beta+ |\nabla f|^{2} - \frac{n}{\tau} \right) e^{-f}dm
\end{aligned}
\end{equation*}
where in the second line we have used that $\int_{M}( \Delta f- |\nabla
f|^{2}) e^{-f}dm =0$. Since $\alpha$ is a closed, $(1,1)$-form, the tensor $\beta$ 
 satisfies the ``contracted Bianchi identity"
\begin{equation}\label{bianchi}
\nabla_{i}\Tr \beta = 2g^{jp}\nabla_{p}\beta_{ij}.
\end{equation}
Using this identity, the second term can be manipulated as follows;
\begin{equation*}
\begin{aligned}
 &\int_{M}(\Delta f - |\nabla f|^{2})(\tau(R- \Tr \beta + 2\Delta f - |\nabla f|^{2}) + f] e^{-f}dm\\
 &= \int_{M}(\Delta f - |\nabla f|^{2})(2\tau \Delta f - \tau |\nabla f|^{2})e^{-f}dm\\
 &- \int_{M}|\nabla f|^{2} e^{-f}dm - \tau \int_{M} \langle \nabla f, \nabla (R-\Tr \beta)\rangle e^{-f}dm \\
 &=\tau\int_{M} \langle -\nabla f, \nabla (2\Delta f - |\nabla f|^{2})\rangle e^{-f}dm\\
 &-\int_{M}\Delta f e^{-f}dm -2\tau \int_{M} g^{jp}g^{ki}(\nabla_{p}R_{ij}-\nabla_{p}\beta_{ij})\nabla_{k}f e^{-f}dm\\
   \end{aligned}
 \end{equation*}
 \begin{equation*}
\begin{aligned}
 &=-2\tau \int_{M} g^{ij}\nabla_{i}f(\nabla_{j}\Delta f -\langle \nabla f,\nabla_{j}\nabla f \rangle)e^{-f}dm\\
 &+2\tau \int_{M}g^{jp}g^{ki}[(R_{ij}-\beta_{ij})\nabla_{p}\nabla_{k}f  - \nabla_{p}f\nabla_{k}f (R_{ij}-\beta_{ij})]e^{-f}dm\\
 &+2\tau \int_{M}\left\langle \frac{g}{2\tau}, \nabla \nabla f \right\rangle e^{-f}dm\\
 &= 2\tau \int_{M}\left[\left \langle \nabla\nabla f , Ric -\beta+\nabla \nabla f - \frac{1}{2\tau}g\right\rangle + \beta(\nabla f, \nabla f) \right] e^{-f} dm  .
 \end{aligned}
 \end{equation*}
Moreover, the third term can be written as
\begin{equation*}
\begin{aligned}
&-\int_{M} \left( R -\Tr \beta+ |\nabla f|^{2} - \frac{n}{\tau} \right) e^{-f}dm \\
&= 2\tau \int_{M}\left \langle \frac{-1}{2\tau}g, Ric-\beta +\nabla\nabla f - \frac{1}{2\tau}g\right\rangle e^{-f}dm.
\end{aligned}
\end{equation*}
Combining these three expressions we obtain
\begin{equation*}
\frac{d}{dt} \W(t) = 2\tau \int_{M} \left(\left|Ric - \beta +\nabla\nabla f - \frac{1}{2\tau}g \right|_{g}^{2} + \beta(\nabla f, \nabla f)\right)(4\pi\tau)^{-n}e^{-f}dm,
\end{equation*}
which proves the theorem.

\end{proof}

\begin{rem}
Note that this computation works for any \emph{real} Ricci flow twisted by
any $(0,2)$-tensor $\beta$ satisfying the contracted Bianchi
identity~(\ref{bianchi}) with respect to all metrics along the flow.  However, monotonicity 
may fail if $\beta$ is not semi-positive. 
\end{rem}

We define the twisted $\mu$ functional as follows;

\begin{defn}
The functional $\mu : \mathfrak{Met} \times \R_{>0} \rightarrow \R$ is defined by
\begin{equation*}
\mu(g,\tau) := \inf \left\{ \W(g,f,\tau) : f\in C^{\infty}(M, \R) \text{ satisfies } (g,f,\tau) \in \mathcal{X} \right\}
\end{equation*} 
where
\begin{equation*}
\mathcal{X} := \left\{(g,f,\tau) : \int_{M} (4\pi \tau)^{-n}e^{-f}dm =1 \right\}
\end{equation*}
\end{defn}

Many of the properties of the twisted $\mu$ functional carry over from the
standard Ricci flow.  We list them, without proof, in the following
proposition.  For details, we refer the reader to \cite{C}.

\begin{prop}\label{mu properties}
The twisted $\mu$ functional has the following properties;
\begin{itemize}
\item[({\it i})]
For any $c \in \R_{>0}$ we have $\mu(cg,c\tau) = \mu (g,\tau)$.
\item[({\it ii})]
For a fixed $\tau \in \R_{>0}$, the function $\mu(g,\tau)$ is continuous in the $C^{2}$ topology on $\mathfrak{Met}$.
\item[({\it iii})]
For any $(g,\tau)$ there exists a function $f \in C^{\infty}(M,\R) \cap \mathcal{X}$ such that $\W(g,f,\tau) = \mu(g,\tau)$.  In particular, $\mu(g,\tau) >-\infty$.  Moreover, $f$ satisfies the nonlinear elliptic equation
\begin{equation}\label{min equation}
\tau(2\Delta f - |\nabla f|^{2} +R -\Tr_{g}\beta) +f -2n = \mu(g,\tau)
\end{equation} 
\item[({\it iv})]
Let $(g,\tau) \in \mathfrak{Met}\times \R_{>0}$ satisfy
equations~(\ref{tKRF}) and (\ref{tau evolution}) on $[0,T]$.  Then
functional $\mu (g,\tau)$ is monotonically increasing.  More precisely, for
any $t_{0} \in [0,T]$, if $f(t_{0}) \in \mathcal{X}$ is the minimizer whose
existence is guaranteed by ({\it iii}) we have
\begin{equation*}
	\begin{aligned}
		\frac{d}{dt}&\big|_{t=t_{0}}\mu(g,\tau)\\
	&\geq 2\tau(t_{0})
\int_{M}\left|Ric(t_{0}) + \nabla \nabla f(t_{0}) -\beta
-\frac{g(t_{0})}{2\tau(t_{0})}\right|^{2}
(4\pi\tau(t_{0}))^{-n}e^{-f(t_{0})}dm_{t_{0}}
\end{aligned}
\end{equation*}
in the sense of $\liminf$ backwards difference quotients.  In particular, for $r>0$ we have
\begin{equation*}
\mu(g(t_{0}), r^{2}) \geq \mu(g(0), r^{2}+t_{0})
\end{equation*}
which follows by taking $\tau(t) = t_{0}+r^{2}-t$.
\end{itemize}
\end{prop}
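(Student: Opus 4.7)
The plan is to adapt the standard treatment of Perelman's $\mu$-functional from the Ricci flow case, tracking the additional $\beta$-terms; since $\beta\geqslant 0$ is smooth and bounded on compact $M$, none of these terms creates an essential new difficulty. For \textit{(i)}, a direct substitution suffices: under $(g,\tau)\mapsto(cg,c\tau)$ with $f$ unchanged, the volume form scales as $dm\mapsto c^n dm$ while each of $R$, $\Tr_g\beta$, $|\nabla f|^2_g$ scales by $c^{-1}$, so $(4\pi\tau)^{-n}dm$, $\tau(R-\Tr_g\beta+|\nabla f|^2)$ and $f-2n$ are all invariant. Hence both $\W$ and the constraint $\mathcal{X}$ are preserved, and so is the infimum.

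For \textit{(iii)}, the standard substitution $u = e^{-f/2}$ converts $\W$ into
\begin{equation*}
\W = (4\pi\tau)^{-n}\int_M \left[4\tau|\nabla u|^2 + \tau(R-\Tr_g\beta)u^2 - 2u^2\log u - 2nu^2\right]dm,
\end{equation*}
with constraint $(4\pi\tau)^{-n}\int_M u^2\,dm = 1$. The logarithmic Sobolev inequality on $(M,g)$ bounds $\int u^2\log u\,dm$ in terms of $\int|\nabla u|^2\,dm$, and together with boundedness of $\Tr_g\beta$ this yields $\mu(g,\tau)>-\infty$ together with uniform $H^1$ bounds on minimizing sequences. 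Direct methods then produce a nonnegative $H^1$ minimizer; its Euler-Lagrange equation is semilinear elliptic (with Lagrange multiplier $\mu-1$), so the strong maximum principle and elliptic bootstrap give $u>0$ and smoothness, and rewriting in terms of $f=-2\log u$ recovers \eqref{min equation}. Part \textit{(ii)} then follows by a compactness argument: uniform log-Sobolev constants for $C^2$-nearby metrics give uniform $H^1$ bounds on the minimizers $u_k$ for $g_k\to g$ in $C^2$, weak lower semicontinuity of the Dirichlet term combined with $C^2$-continuity of the remaining integrand yields $\liminf_k\mu(g_k,\tau)\geqslant\mu(g,\tau)$, and the reverse bound is obtained by testing $\W(g_k,\cdot,\tau)$ against an appropriate rescaling of the minimizer for $g$.

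For \textit{(iv)}, the main trick is to solve \eqref{f evolution} \emph{backward} in time from a minimizer $f(t_0)$ at time $t_0$: the substitution $v = e^{-f}$ converts the equation into a linear parabolic PDE with smooth coefficients, which is solvable on a short backward interval given the smooth initial data supplied by \textit{(iii)}, and a direct differentiation shows that $\int_M(4\pi\tau)^{-n}e^{-f}\,dm$ is conserved, so $(g(t),f(t),\tau(t))\in\mathcal{X}$ for $t\leqslant t_0$. Theorem~\ref{W monotonicity} then gives
\begin{equation*}
\mu(g(t_0),\tau(t_0))=\W(g(t_0),f(t_0),\tau(t_0))\geqslant \W(g(t),f(t),\tau(t))\geqslant \mu(g(t),\tau(t))
\end{equation*}
for $t\leqslant t_0$; dividing by $t_0-t$ and passing to the backward $\liminf$, the exact derivative formula in Theorem~\ref{W monotonicity} evaluated at $t_0$ with $f(t_0)$ produces the stated lower bound on $\frac{d}{dt}\mu$. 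The concluding inequality is then immediate: $\tau(t) = t_0 + r^2 - t$ solves \eqref{tau evolution} with $\tau(0) = r^2+t_0$ and $\tau(t_0) = r^2$, so monotonicity of $\mu$ along the coupled flow yields $\mu(g(t_0),r^2)\geqslant\mu(g(0),r^2+t_0)$. I expect the main technical obstacle to lie in \textit{(iii)}, specifically checking that the $-\Tr_g\beta$ contribution does not spoil the log-Sobolev lower bound or the ensuing minimizer regularity; since $\beta$ is smooth and bounded this turns out to be essentially routine, but it is the one spot where the twisting genuinely must be tracked.
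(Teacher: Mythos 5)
The paper does not actually prove this proposition; it lists the properties and defers to Chow--Chu et al.\ \cite{C}, noting only that they ``carry over from the standard Ricci flow.'' Your write-up fills in the argument, and it is the standard Rothaus/Perelman proof adapted to the twisted setting, so there is no disagreement with anything in the paper to report. A few points are worth flagging, though none is fatal.

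For \textit{(i)}, the scaling computation is correct: with complex dimension $n$, the real volume form scales by $c^n$, which cancels $(4\pi\tau)^{-n}$, and $\tau\cdot c^{-1}$ compensates the scaling of $R$, $\Tr_g\beta$, and $|\nabla f|^2$; neither $\W$ nor the constraint changes. For \textit{(iv)}, your reduction $v=e^{-f}$ converts \eqref{f evolution} into the backward linear heat equation $\partial_t v = -\Delta v + (R-\Tr\beta-n/\tau)v$, and the computation that $(4\pi\tau)^{-n}\int v\,dm$ is conserved is correct (the $n/\tau$ term from $\dot\tau = -1$ cancels against the coefficient, and $\partial_t dm = -(R-\Tr\beta)\,dm$ cancels the remaining linear term). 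The sandwiching $\mu(t_0)=\W(t_0)\geqslant \W(t)\geqslant\mu(t)$ and division by $t_0-t$ then gives the backward-$\liminf$ derivative bound, and dropping the non-negative $\beta(\nabla f,\nabla f)$ term from Theorem~\ref{W monotonicity} produces exactly the stated inequality.

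Two places deserve slightly more care than your sketch indicates. In \textit{(iii)}, after obtaining an $H^1$ minimizer $u\geqslant 0$, positivity is not quite a direct application of the strong maximum principle, because the nonlinearity $-2u\log u$ degenerates at $u=0$; the classical argument (Rothaus) uses a Harnack-type step, and one should note that the contribution $-\tau(\Tr_g\beta) u$ is a bounded zeroth-order term (here non-negativity and smoothness of $\beta$ are used) that does not interfere. In \textit{(ii)}, the ``weak lower semicontinuity'' step for the entropy term $-\int u^2\log u^2$ is not immediate from convexity (this term is concave in $u^2$); one should invoke strong $L^2$ convergence from the compact Sobolev embedding plus the uniform $H^1$ bound (which controls $\int u^2|\log u^2|$ via log-Sobolev), or alternatively use the direct perturbation estimate in \cite{C}: testing the minimizer for $g$ (rescaled to satisfy the constraint for $g'$) in $\W(g',\cdot,\tau)$ gives $|\mu(g,\tau)-\mu(g',\tau)|\leqslant C\Vert g-g'\Vert_{C^2}$ directly. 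With these caveats acknowledged, your proof is correct and matches what the reference \cite{C} does in the untwisted case.
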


It follows from Proposition~\ref{mu properties} ({\it i}) and ({\it iv}),
and the relation between the normalized and unnormalized twisted K\"ahler
Ricci flow, that $\mu(g, \frac{1}{2})$ is monotonically increasing along
the normalized flow.  For convenience, we record this in the following
lemma.

\begin{lem}\label{lem: mu upper bound}
The quantity $\mu(g,\frac{1}{2})$ is monotonically increasing along the
normalized twisted K\"ahler-Ricci flow.  Moreover, $\mu(g,\frac{1}{2})$ is
uniformly bounded above by a topogical constant,
\begin{equation*}
\mu(g,\frac{1}{2}) \leq  \log\left[(2\pi)^{-n}Vol(M)\right].
\end{equation*}
\end{lem}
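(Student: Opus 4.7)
The plan is to handle the two assertions separately; both reduce immediately to Proposition~\ref{mu properties}.

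For the monotonicity, let $g(u)$ be a solution of the normalized twisted K\"ahler-Ricci flow on $[0,\infty)$, and set $\tilde g(t) := (1-2t)\,g(-\log(1-2t))$, which solves the unnormalized flow~(\ref{tKRF}) on $[0,\tfrac{1}{2})$. Writing $c := e^u = 1/(1-2t)$, we have $g(u) = c\,\tilde g(t)$, so property~(\emph{i}) of Proposition~\ref{mu properties} gives
\begin{equation*}
	\mu(g(u),\tfrac{1}{2}) \;=\; \mu(c\,\tilde g(t),\tfrac{1}{2}) \;=\; \mu\bigl(\tilde g(t),\tfrac{1-2t}{2}\bigr).
\end{equation*}
The function $\tau(t) := (1-2t)/2$ satisfies~(\ref{tau evolution}), so property~(\emph{iv}) applied along the unnormalized flow yields monotonicity of the right hand side in $t$, hence of the left hand side in $u$.

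For the upper bound, the definition of $\mu$ gives $\mu(g,\tfrac{1}{2}) \leq \W(g,f,\tfrac{1}{2})$ for any admissible $f$. I would use the constant test function
\begin{equation*}
	f_0 := \log\bigl(V/(2\pi)^n\bigr), \qquad V := \mathrm{Vol}(M),
\end{equation*}
which automatically satisfies $(4\pi\cdot\tfrac{1}{2})^{-n}\int_M e^{-f_0}\,dm = 1$ and has $|\nabla f_0|^2 = 0$. The key ingredient is the topological identity
\begin{equation*}
	\int_M (R - \Tr_g \beta)\,dm \;=\; 2nV,
\end{equation*}
which follows from the K\"ahler relations $R\,dm = 2n\,\mathrm{Ric}(\omega)\wedge\omega^{n-1}/n!$ and $(\Tr_g\beta)\,dm = 2n\,\alpha\wedge\omega^{n-1}/n!$, combined with the fact that $[\mathrm{Ric}(\omega)-\alpha] = 2\pi c_1(M) - [\alpha] = [\omega]$ along the normalized flow. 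Substituting $f_0$ into $\W$ and using this identity collapses every term to give
\begin{equation*}
	\W(g,f_0,\tfrac{1}{2}) \;=\; \log\bigl[(2\pi)^{-n}V\bigr] - n,
\end{equation*}
which is in fact stronger than the stated bound.

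The only thing to watch out for is the interplay between the Riemannian normalization (with real dimension $2n$ and $R = 2\,\mathrm{Tr}_\omega\mathrm{Ric}$) and the K\"ahler normalization, and keeping track of the scaling in property~(\emph{i}) when translating between the normalized and unnormalized flows; neither is a genuine obstacle.
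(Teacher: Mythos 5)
Your proof is correct and follows essentially the same route as the paper's: the monotonicity is the same rescaling argument via Proposition~\ref{mu properties}~(\emph{i}),~(\emph{iv}), and the upper bound comes from the same constant test function $f_0 = \log\bigl((2\pi)^{-n}V\bigr)$. The one small refinement is that you exploit the exact cohomological identity $[\mathrm{Ric}(\omega)-\alpha]=[\omega]$ to compute $\int_M (R-\Tr_g\beta)\,dm = 2nV$, which yields the sharper bound $\mu(g,\tfrac12) = \W(g,f_0,\tfrac12) \leq \log\bigl((2\pi)^{-n}V\bigr) - n$; the paper's terse proof invokes semi-positivity of $\beta$ and settles for the looser stated constant, but both arguments are sound.
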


\begin{proof}
This follows immediately by substituting 
\[ f= -n\log(2\pi) + \log( Vol(M) ),\]
into the $\mathcal{W}$-functional, 
using the relation between the \emph{real} scalar curvature and the complex
scalar curvature, and the semi-positivity of $\beta$.
\end{proof}

In the remainder of this section we prove a non-collapsing estimate along
the twisted K\"ahler-Ricci flow,
extending Perelman's estimates for the K\"ahler-Ricci flow.  In the untwisted case, this result
is an improvement (due to Perelman), of Perelman's original non-collapsing result \cite{KleinerLott, ST}.  
For our later applications, it will be important to prove \emph{effective}
estimates, with clear dependence on the geometry of the initial metric
$g_{0}$.  First, we need the following easy lemma.

\begin{lem}\label{radius Lemma}
Fix $x \in M$ and $t \in [0,\frac{1}{2})$, and suppose there is an $ r > 0 $ such that $|R(g(t))-Tr_{g(t)}\beta| \leq \frac{K}{r^{2}}$ on $B(x,r)$.  Then there exists $r' \in (0,r]$ such that
\begin{enumerate}
\item[{\it (i)}] $|R - Tr_{g}\beta| \leq \frac{K}{r'^{2}}$ on $B(x,r')$
\item[{\it (ii)}] $(r')^{-2n}Vol(B(x,r'))\leq r^{-2n}Vol(B(x,r)) $
\item[{\it (iii)}] $Vol(B(x,r'))\leq 3^{2n}Vol(B(x,r'/2))$.
\end{enumerate}
\end{lem}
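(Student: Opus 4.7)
The plan is to prove the lemma by a dyadic descent on the radius, of the sort familiar from Perelman-style non-collapsing arguments. Property (i) is essentially automatic: for any $r' \in (0,r]$ we have $B(x,r') \subset B(x,r)$, and on the smaller ball the hypothesis gives $|R-\Tr_g \beta| \leq K/r^2 \leq K/(r')^2$. Thus the real content is to produce an $r' \in (0,r]$ satisfying both the scale-invariant volume comparison (ii) and the doubling condition (iii) simultaneously.

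To organize the descent, introduce the scale-invariant volume ratio
\[ V(s) := s^{-2n} \mathrm{Vol}(B(x,s)). \]
I argue by contradiction: suppose no such $r'$ exists. Then every $s \in (0,r]$ with $V(s) \leq V(r)$ must violate (iii), i.e.
\[ \mathrm{Vol}(B(x,s)) > 3^{2n} \mathrm{Vol}(B(x,s/2)). \]
Dividing both sides by $(s/2)^{2n}$ rearranges this to $V(s/2) < (2/3)^{2n} V(s)$. Starting from $s=r$, which trivially satisfies (ii), and iterating with $r_k := r \cdot 2^{-k}$, a routine induction yields $V(r_k) \leq (2/3)^{2nk} V(r)$ for all $k$, so $V(r_k) \to 0$ as $k \to \infty$.

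This is a contradiction, because on any smooth Riemannian manifold the volume ratio satisfies $\lim_{s\to 0^+} V(s) = c_{2n}$, where $c_{2n}>0$ is the Euclidean volume of the unit ball in $\R^{2n}$ (a standard consequence of expanding the volume element in normal coordinates at $x$). Hence such an $r'$ must exist, and combined with the observation about (i) the lemma follows.

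I do not anticipate a genuine obstacle here: this is a purely geometric selection lemma that does not interact with the flow at all. The only point worth tracking in the induction is that each $r_k$ produced by the descent continues to satisfy (ii), which is immediate from $(2/3)^{2nk} \leq 1$, so that the hypothesised failure of (iii) can be applied again at the next scale.
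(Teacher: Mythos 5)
Your proof is correct and follows essentially the same route as the paper's: both arguments perform a dyadic descent in scale, exploit the standard small-radius asymptotic $\mathrm{Vol}(B(x,s)) \sim c_{2n}s^{2n}$ (the paper phrases it as $\lim_k \mathrm{Vol}(B(x,r/2^k))/\mathrm{Vol}(B(x,r/2^{k+1})) = 2^{2n}$, you as $V(s)\to c_{2n}$), and note that a persistent failure of the doubling bound (iii) forces geometric decay of the scale-invariant volume ratio, which is impossible. The paper argues directly by choosing the smallest $k$ at which the dyadic ratio drops to $3^{2n}$ and iterating the reverse inequality at smaller $l$ to get (ii); you phrase the same computation as a contradiction. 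No gap.
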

\begin{proof}
	The first item holds for any $r' \leqslant r$. 
By the standard expansion of the volume of a geodesic ball we have
\begin{equation*}
\lim_{k\rightarrow \infty} \frac{Vol(B(x,r/2^{k})) }{Vol(B(x,r/2^{k+1}))} = 2^{2n}
\end{equation*}
Hence, there is a $k < \infty$ such that
\begin{equation}\label{eq:iii}
\frac{Vol(B(x,r/2^{k}))}{Vol(B(x,r/2^{k+1}))}\leq 3^{2n},
\end{equation}
and if $l < k$, then
\begin{equation}\label{iterate}
 \frac{Vol(B(x,r/2^{l})) }{Vol(B(x,r/2^{l+1}))}>3^{2n}.
 \end{equation}
Choosing $r'=r/2^k$, item (iii) follows from \eqref{eq:iii}, while item
(ii) follows by 
iterating the inequality (\ref{iterate}) 
\end{proof}
 
We first prove the non-collapsing estimate along the \emph{unnormalized}
twisted K\"ahler-Ricci flow.
The corresponding estimate along the tKRF is then obtained by rescaling.

\begin{prop}\label{non collapsing unnorm}
Let $\tilde{g}(s)$ be a solution of the unnormalized twisted K\"ahler-Ricci flow with $\tilde{g}(0)=g_{0}$.  
Fix a number $\rho >0$, and define
\begin{equation*}
A(\rho):= \inf_{\tau \in [0,\frac{1}{2}+\rho^{2}]} \mu(g(0),\tau) >-\infty.
\end{equation*}
Then, for all  $(x,t) \in M\times [0,\frac{1}{2})$ and  $0<r \leq \rho$ 
such that 
\[ r^{2}|R(\tilde{g}(s))-\Tr_{\tilde{g}(s)}\beta| \leq K \text{ on } 
	B_{\tilde{g}(s)}(x,r),\]
there holds 
\begin{equation*}
Vol_{\tilde{g}(s)}(B(x, r)) \geq \kappa(K,\rho) r^{2n},
\end{equation*}
where $\kappa(K, \rho) = \exp
\left(A(\rho)+2n+n\log(4\pi)-3^{2n+2}-K\right)$.
\end{prop}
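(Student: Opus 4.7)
The plan is to run Perelman's non-collapsing argument, adapted to the twisted flow: use the monotonicity of the twisted $\mu$-functional (Proposition~\ref{mu properties}) to transfer information from the initial metric, reduce to a convenient scale via Lemma~\ref{radius Lemma}, and then test $\mu$ from above with a standard cutoff function. To begin, apply Lemma~\ref{radius Lemma} to produce $r' \in (0, r]$ satisfying items~(i)--(iii); item~(ii) will eventually transfer the desired lower bound at scale $r'$ back to scale $r$, while item~(iii) supplies the volume-doubling estimate used below.

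Since $r \leq \rho$ and $t \in [0, \tfrac{1}{2})$ we have $(r')^2 + t \leq \rho^2 + \tfrac{1}{2}$, so Proposition~\ref{mu properties}(iv), applied with $\tau(s) = t + (r')^2 - s$, yields
\begin{equation*}
\mu(\tilde g(t), (r')^2) \geq \mu(\tilde g(0), (r')^2 + t) \geq A(\rho).
\end{equation*}
We then construct a test function competing for $\mu(\tilde g(t), (r')^2)$. Fix a smooth cutoff $\chi : [0, \infty) \to [0, 1]$ with $\chi \equiv 1$ on $[0, \tfrac{1}{2}]$, $\chi \equiv 0$ on $[1, \infty)$ and $|\chi'| \leq C_0$ for some absolute constant $C_0$, and set $\psi(y) = \chi(d_{\tilde g(t)}(x, y)/r')$. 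Taking $f = c - 2\log\psi$ (interpreted via a standard smooth approximation where $\psi$ vanishes), choose $c$ so that the normalization $(4\pi (r')^2)^{-n}\int_M e^{-f}\,dm = 1$ holds. This gives $e^{-c} = (4\pi (r')^2)^n / I$ with $I := \int_M \psi^2\,dm$, and since $\psi \leq 1$, $I \leq \mathrm{Vol}(B(x, r'))$.

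A direct computation using $|\nabla f|^2 \psi^2 = 4|\nabla\psi|^2$ gives
\begin{equation*}
\W(\tilde g(t), f, (r')^2) = c - 2n + \frac{1}{I}\int_M \bigl[(r')^2(R - \Tr\beta)\psi^2 + 4(r')^2|\nabla\psi|^2 - 2\psi^2 \log\psi\bigr]\,dm.
\end{equation*}
The curvature integrand is bounded pointwise by $K$ by hypothesis. The gradient integrand is supported in the annulus $\{r'/2 \leq d(x, \cdot) \leq r'\}$, whose volume is at most $3^{2n}\mathrm{Vol}(B(x, r'/2)) \leq 3^{2n} I$ by item~(iii), so combined with the cutoff derivative bound it contributes a constant depending only on $n$. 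The term $-\psi^2 \log\psi$ is pointwise bounded and, by the same doubling estimate, contributes a similar dimensional constant. Absorbing these into $3^{2n+2}$ (with a suitably sharp choice of $\chi$), we obtain
\begin{equation*}
\W(\tilde g(t), f, (r')^2) \leq c - 2n + K + 3^{2n+2}.
\end{equation*}

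Combining with the preceding bound on $\mu$ gives $c \geq A(\rho) + 2n - K - 3^{2n+2}$, and the normalization together with $I \leq \mathrm{Vol}(B(x, r'))$ yields
\begin{equation*}
\mathrm{Vol}(B(x, r')) \geq (4\pi)^n (r')^{2n} e^c \geq \kappa(K, \rho)\,(r')^{2n};
\end{equation*}
item~(ii) of Lemma~\ref{radius Lemma} then transfers this bound from scale $r'$ to scale $r$. The main bookkeeping task will be to choose $\chi$ sharp enough and track the annular doubling constant so that the combined gradient and entropy contribution fits within the stated $3^{2n+2}$. The twist $\beta$ causes no further complication: it enters $\W$ only through the shift $R - \Tr\beta$, already controlled by hypothesis, while its non-negativity has already been invoked implicitly through the monotonicity of $\mu$ in Proposition~\ref{mu properties}.
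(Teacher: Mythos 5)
Your proposal follows essentially the same route as the paper's proof: reduce to a good scale $r'$ via Lemma~\ref{radius Lemma}, transfer the lower bound on $\mu$ from time $0$ using Proposition~\ref{mu properties}~({\it iv}), plug a cutoff supported in $B(x,r')$ into $\W$ with $f = c - 2\log\psi$, and use the doubling estimate~({\it iii}) to control the annular gradient and $-\psi^2\log\psi$ contributions. The only real differences are cosmetic --- the paper frames the estimate as a contradiction argument starting from $\mathrm{Vol}(B(x,r)) < \kappa r^{2n}$ and uses an explicit piecewise-linear cutoff with $|\phi'| \leqslant 2$ to pin down the constant $3^{2n+2}$ appearing in $\kappa(K,\rho)$, whereas you defer that final bookkeeping to an unspecified choice of~$\chi$.
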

\begin{proof}
For simplicity, we suppress the dependence on $\tilde{g}(s)$. 
Fix a point $x \in M$, a radius $r \in (0, \rho]$ 
and a time $t \in [0,1/2)$. Suppose that  
\begin{equation*}
Vol(B(x, r)) < \kappa r^{2n}.
\end{equation*}
Let $r'\in (0, r]$ be the number provided by  Lemma~\ref{radius Lemma}.  
Then we have \\$r'{}^{2}|R-\Tr \beta| \leq K$ on $B(x,r')$, and
\begin{equation*}
Vol(B(x, r')) < \kappa r'{}^{2n}.
\end{equation*}
In order to ease notation, we now set $r=r'$.  Let $\phi: [0,\infty)\rightarrow \mathbb{R}$ 
be the function which is $1$ on $[0, 1/2)$, decreases linearly 
to zero on $(1/2,1]$, and is identically zero on $[1,\infty)$.  Then we set
\begin{equation*}
u(y) = e^{C}\phi(r^{-1}d(x,y))
\end{equation*}
where $C$ is chosen so that
\begin{equation*}
(4\pi)^{n} = e^{2C}r^{-2n}\int_{B(x,r)}\phi(r^{-1}d(x,y))^{2}dm(y).
\end{equation*}
It follows immediately from the definition of $\phi$ that 
\begin{equation*}
C > \frac{n}{2}\log(4\pi) - \frac{1}{2}\log(\kappa).
\end{equation*}
We now plug the function $u$ into the twisted entropy functional to get
\begin{equation*}
\begin{aligned}
\W(g(t),u, r^{2}) &\leq 8(4\pi)^{-n}r^{-2n}e^{2C}\left[Vol(B(x, r)) -
	Vol(B(x, r/2))\right]\\
	&\quad + K -2n -2C\\
&\leq 3^{2n+2}(4\pi)^{-n}r^{-2n}e^{2C}Vol(B(x, r/2)) + K-2n-2C\\
&\leq 3^{2n+2}(4\pi)^{-n}r^{-2n}e^{2C}\int_{M}\phi^{2}(y)dm(y)+ K-2n-2C\\
&< 3^{2n+2} + K - 2n - n\log(4\pi) + \log(\kappa).
\end{aligned}
\end{equation*}
Since $A(\rho) \leq \mu(g(0),t+r^{2}) \leq \mu(g(t), r^{2})$ for any $r\in [0,\rho)$, we obtain
\begin{equation*}
A(\rho) < 3^{2n+2} + K - 2n - n\log(4\pi) + \log(\kappa),
\end{equation*} 
from which it follows that $\kappa > \exp(A(\rho) + 2n + n\log(4\pi) -
3^{2n+2}-K)$. 
\end{proof}

Along the unnormalized flow we obtain;
\begin{prop}\label{non collapsing}
Let $g(t)$ be a solution of the normalized twisted K\"ahler-Ricci flow.
Fix a number $\rho >0$.  Then, for all  $(x,t) \in M\times [0,\infty)$
and  $0<r \leq e^{t/2}\rho$ 
such that $r^{2}|R(g(t))-\Tr_{g(t)}\beta| \leq K$ 
on $B(x,r)$ there holds 
\begin{equation*}
Vol_{g(t)}(B(x, r)) \geq \kappa(K,\rho) r^{2n}
\end{equation*}
where $\kappa(K, \rho)$ is defined in Proposition~\ref{non collapsing unnorm}.
\end{prop}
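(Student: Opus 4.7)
The plan is to deduce Proposition~\ref{non collapsing} from Proposition~\ref{non collapsing unnorm} by the standard change of variable between the normalized and unnormalized twisted K\"ahler-Ricci flows already recorded in the introduction. Recall that if $g(t)$ solves the normalized flow, then $\tilde g(s) = (1-2s)\,g(-\log(1-2s))$ solves the unnormalized flow for $s\in[0,1/2)$. Setting $t = -\log(1-2s)$, equivalently $s = (1-e^{-t})/2$, we have $\tilde g(s) = e^{-t}g(t)$.

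First, I would work out how the hypotheses and conclusion transform under this conformal rescaling. Given $(x,t)$ and a radius $\hat r$ as in the statement, set $r := e^{-t/2}\hat r$. Then the distance functions are related by $d_{\tilde g(s)} = e^{-t/2}d_{g(t)}$, so
\begin{equation*}
B_{\tilde g(s)}(x,r) = B_{g(t)}(x,\hat r),
\end{equation*}
and the Riemannian volumes satisfy
\begin{equation*}
\mathrm{Vol}_{\tilde g(s)}(B_{\tilde g(s)}(x,r)) = e^{-nt}\,\mathrm{Vol}_{g(t)}(B_{g(t)}(x,\hat r)).
\end{equation*}
Moreover the scalar curvature and twist-trace both scale by $e^t$, so
\begin{equation*}
r^{2}\bigl|R(\tilde g(s)) - \mathrm{Tr}_{\tilde g(s)}\beta\bigr| = \hat r^{2}\bigl|R(g(t)) - \mathrm{Tr}_{g(t)}\beta\bigr|,
\end{equation*}
so the hypothesis $\hat r^2|R - \mathrm{Tr}_{g(t)}\beta|\leq K$ on $B_{g(t)}(x,\hat r)$ is exactly the hypothesis of Proposition~\ref{non collapsing unnorm} for $\tilde g(s)$ on $B_{\tilde g(s)}(x,r)$ with the same constant $K$. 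The admissibility constraint $r\leq\rho$ becomes $\hat r\leq e^{t/2}\rho$, which matches the statement.

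Next I would apply Proposition~\ref{non collapsing unnorm} to $\tilde g(s)$, yielding
\begin{equation*}
\mathrm{Vol}_{\tilde g(s)}(B_{\tilde g(s)}(x,r)) \geq \kappa(K,\rho)\,r^{2n},
\end{equation*}
and then translate back using the identities above:
\begin{equation*}
\mathrm{Vol}_{g(t)}(B_{g(t)}(x,\hat r)) = e^{nt}\,\mathrm{Vol}_{\tilde g(s)}(B_{\tilde g(s)}(x,r)) \geq \kappa(K,\rho)\,e^{nt}r^{2n} = \kappa(K,\rho)\,\hat r^{2n},
\end{equation*}
which is exactly the desired conclusion with the same constant $\kappa(K,\rho)$.

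There is no real obstacle here; the content of the proposition is already packaged into the unnormalized statement. The only thing to be careful about is that the factor $e^{t/2}$ appearing in the hypothesis $r\leq e^{t/2}\rho$ arises precisely so that the rescaled radius $e^{-t/2}r$ remains within the admissible range $[0,\rho]$ on which the initial-time entropy lower bound $A(\rho)$ (and hence $\kappa(K,\rho)$) was defined. Consequently no new dependence on $t$ enters the constant $\kappa$.
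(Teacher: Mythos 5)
Your proof is correct and follows essentially the same scaling argument as the paper: rescale to the unnormalized flow via $\tilde g(s) = e^{-t}g(t)$, track how radii, volumes, and $R-\mathrm{Tr}\beta$ transform, apply Proposition~\ref{non collapsing unnorm}, and undo the scaling. The only (cosmetic) difference is that you spell out the volume and curvature scaling factors explicitly, whereas the paper states them more tersely; you also correctly note that the hypothesis $r\leq e^{t/2}\rho$ exists precisely to ensure the rescaled radius stays in $(0,\rho]$.
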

\begin{proof}
The proof is just an exercise in scaling.  We include the details for completeness.  Define a solution of the unnormalized tKRF by setting $\tilde{g}(s) = (1-2s)g(t(s))$ where $t(s) = -\ln(1-2s)$.
Set $\tilde{r} = e^{-t/2}r$.  Then $0 < r \leq \rho$, and we have
\begin{equation*}
\tilde{r}^{2}|R(\tilde{g}) - \Tr_{\tilde{g}}\beta| = r^{2}|R(g) - \Tr_{g}\beta| \leq K
\end{equation*}
on $B_{\tilde{g}}(x, \tilde{r}) = B_{g}(x,r)$.  Thus, we can apply the non-collapsing estimate of Proposition~\ref{non collapsing unnorm} to $\tilde{g}(s)$ to obtain
\begin{equation*}
Vol_{\tilde{g}}(B(x,\tilde{r})) \geq \kappa(K, \rho) \tilde{r}^{2n}.
\end{equation*}
 Replacing $\tilde{g}$ with $g$ proves the proposition.
 \end{proof}

\section{Perelman type estimates and the Sobolev inequality}\label{sec: Perelman}
In this section we extend Perelman's bounds \cite{ST}
on the scalar curvature and
diameter along the K\"ahler-Ricci flow to the twisted flow. We will need
effective estimates in our application, so we are careful to keep track of
constants. In addition one difference with the arguments in \cite{ST} is
that we bound $u$ independently of the diameter. This may be of independent
interest in situations when the diameter is not bounded. 

Since the 
twisted K\"ahler-Ricci flow (tKRF) preserves the cohomology class of
$\omega$, we can write it in terms of the K\"ahler potential $\phi$
defined by $\omega(t) = \omega_{0}+\ddb \phi(t)$.  Let $u(t)$ be the
twisted Ricci potential defined by
\[ \ddb u = \omega + \alpha - \mathrm{Ric}(\omega). \]
Then on the level of potentials the twisted K\"ahler-Ricci flow is given by
\begin{equation}\label{eq:ddtphi}
\dot{\phi} = \log\left(\frac{(\omega_{0}+
\ddb\phi)^{n}}{\omega_{0}^{n}}\right) + \phi + u(0),
\end{equation}
up to the addition of a time dependent constant. At each time we can
normalize the twisted Ricci potential $u$, so that
\[ \int_M e^{-u}\,dm = \int_Mdm = Vol(M). \]
This normalization implies that $u$ must vanish somewhere. 
We can then normalize the potentials along the flow by setting $\phi(0)=0$,
and 
\[ \ddt \phi = u, \]
which gives an equation equal to \eqref{eq:ddtphi}, up to adding a time
dependent constant.

Differentiating \eqref{eq:ddtphi}, the evolution of $u$ is given by
\begin{equation}\label{u evolve}
 \ddt u = \Delta u + u - c(t),
 \end{equation}
where $c(t)$ is a time-dependent constant. We can compute $c$ from the
normalization condition, since
\[
	0 = \frac{d}{dt}\int_M e^{-u}dm = \int_M (-\Delta u - u + c +
\Delta u)e^{-u}\,dm, 
\]
so we need 
\[ c = \int_M ue^{-u}\,dm \leq0. \]
where the inequality follows from Jensen's inequality.

The following is analogous to the weighted Poincar\'e inequality of
Futaki \cite{Fut}(see also, \cite{TZ3}). The proof is identical, using that $\alpha$ is a
non-negative form. 
\begin{lem}\label{poincare}
For any $f$ on $M$ we have
\[ \frac{1}{V}\int_M f^2e^{-u}\,dm \leqslant \frac{1}{V}
\int_M |\nabla
f|^2e^{-u}\,dm + \left(\frac{1}{V}\int_M fe^{-u}\,dm\right)^2.\]
\end{lem}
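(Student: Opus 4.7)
The plan is to follow Futaki's weighted Poincar\'e argument (compare \cite{Fut, TZ3}), with the only change being that the non-negativity of $\alpha$ contributes a favorable extra term at the Bochner step. The key input is the twisted Ricci identity
\[ R_{i\bar j} + u_{i\bar j} = g_{i\bar j} + \alpha_{i\bar j}, \]
obtained by applying $\partial_i\partial_{\bar j}$ to the defining equation $\ddb u = \omega + \alpha - \mathrm{Ric}(\omega)$. It says exactly that the Bakry-Emery Ricci form is bounded below by $\omega + \alpha$, and hence by $\omega$ because $\alpha\geq 0$.

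First I would reduce to the mean-zero case. Writing $\bar f = V^{-1}\int_M f e^{-u}\, dm$ and $\varphi = f - \bar f$, one has $|\nabla f|^2 = |\nabla \varphi|^2$, $\int_M \varphi\, e^{-u}\, dm = 0$, and $\int_M f^2 e^{-u}\, dm = \int_M \varphi^2 e^{-u}\, dm + V\bar f^2$, so the inequality reduces to
\[ \int_M \varphi^2 e^{-u}\, dm \leq \int_M |\nabla \varphi|^2 e^{-u}\, dm \]
for any real $\varphi$ with zero weighted mean.

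I would then apply the K\"ahler Bochner--Kodaira identity for the $(0,1)$-form $\bar\partial\varphi$, weighted by $e^{-u}$. Integration by parts against $e^{-u}\,dm$ yields an identity of the form
\[ \int_M |\bar\partial^*_u\, \bar\partial\varphi|^2\, e^{-u}\, dm = \int_M |\partial\bar\partial\varphi|^2\, e^{-u}\, dm + \int_M (R_{i\bar j}+u_{i\bar j})\, \varphi^i\varphi^{\bar j}\, e^{-u}\, dm, \]
where $\varphi^i = g^{i\bar j}\varphi_{\bar j}$. Substituting the twisted Ricci identity rewrites the curvature term as $\int_M |\bar\partial\varphi|^2\, e^{-u}\, dm + \int_M \alpha_{i\bar j}\varphi^i\varphi^{\bar j}\, e^{-u}\, dm$, and since $\alpha\geq 0$ as a $(1,1)$-form the $\alpha$-integrand is non-negative. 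Dropping this $\alpha$-term together with the non-negative Hessian term gives $\int|\bar\partial\varphi|^2 e^{-u}\,dm \leq \int|\bar\partial^*_u\bar\partial\varphi|^2 e^{-u}\,dm$. Since $\bar\partial^*_u\bar\partial$ is a positive self-adjoint operator on $L^2(e^{-u}\,dm)$ with kernel the constants, spectral decomposition on the orthogonal complement of the constants now shows that the first eigenvalue $\mu_1$ of $\bar\partial^*_u\bar\partial$ satisfies $\mu_1\geq 1$, whence $\int \varphi^2 e^{-u}\,dm \leq \int |\bar\partial\varphi|^2 e^{-u}\,dm$, which is the claimed Poincar\'e inequality (after noting $|\bar\partial\varphi|^2\leq|\nabla\varphi|^2$).

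The only technical point calling for real care is bookkeeping the weights in the integration by parts --- in particular tracking how the multiplier $e^{-u}$ modifies $\bar\partial^*$, and how the extra $u_{i\bar j}$ term appears in the Bochner--Kodaira formula on $(0,1)$-forms. Beyond that, the non-negativity of $\alpha$ is used exactly once, to ensure the additional $\alpha$-integrand has the favorable sign; otherwise the argument is a verbatim copy of the classical untwisted Futaki proof.
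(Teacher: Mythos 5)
Your proof is correct and is precisely the Futaki-type weighted Bochner--Kodaira argument that the paper invokes when it says ``The proof is identical, using that $\alpha$ is a non-negative form.'' The reduction to the mean-zero case, the twisted Ricci identity $R_{i\bar j}+u_{i\bar j}=g_{i\bar j}+\alpha_{i\bar j}$, the favorable sign of the $\alpha$-term, and the resulting eigenvalue bound $\mu_1\geq 1$ for $\bar\partial_u^*\bar\partial$ all go through exactly as you outline (and note that in the paper's complex convention $|\nabla\varphi|^2=g^{i\bar j}\varphi_i\varphi_{\bar j}=|\bar\partial\varphi|^2$, so your last inequality is in fact an equality).
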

A consequence is the following monotonicity.
\begin{lem}
\[
\frac{d}{dt} c =  \frac{d}{dt}\int_M ue^{-u}\,dm \geqslant 0. 
\]
\end{lem}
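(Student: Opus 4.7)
The plan is to differentiate $c$ directly in time, using the evolution equation \eqref{u evolve} for $u$ together with the fact that along the normalized twisted K\"ahler-Ricci flow the volume form evolves by $\ddt\, dm = (\Delta u)\, dm$ (this follows from $\ddt \omega = \ddb u$ and tracing with $\omega$). The identity will then reduce, after some cancellation and integration by parts, to an inequality that falls directly out of Lemma~\ref{poincare}.

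First I would write
\begin{equation*}
\ddt \int_M u e^{-u}\,dm = \int_M (1-u)\,\ddt u \cdot e^{-u}\,dm + \int_M u e^{-u}\,\Delta u\,dm,
\end{equation*}
substitute $\ddt u = \Delta u + u - c$, and expand $(1-u)(\Delta u + u - c)$. The cross-term $-\int_M u\,\Delta u \cdot e^{-u}\,dm$ cancels precisely against the contribution $+\int_M u e^{-u}\,\Delta u\,dm$ coming from the evolution of $dm$. The remaining Laplacian term $\int_M \Delta u \cdot e^{-u}\,dm$ integrates by parts to $\int_M |\nabla u|^2 e^{-u}\,dm$. Using the normalizations $\int_M e^{-u}\,dm = V$ and $\int_M u e^{-u}\,dm = cV$ to simplify the linear-in-$c$ and constant-in-$c$ pieces, I expect to land on
\begin{equation*}
\ddt c = \frac{1}{V}\left( \int_M |\nabla u|^2 e^{-u}\,dm - \int_M u^2 e^{-u}\,dm\right) + c^2,
\end{equation*}
up to a harmless overall constant depending on whether $c$ is defined with the factor $V^{-1}$ or not.

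To conclude, I would apply Lemma~\ref{poincare} with $f = u$. Since $V^{-1}\int_M u e^{-u}\,dm = c$, that lemma reads
\begin{equation*}
\frac{1}{V}\int_M u^2 e^{-u}\,dm \leq \frac{1}{V}\int_M |\nabla u|^2 e^{-u}\,dm + c^2,
\end{equation*}
which is exactly the statement that the right-hand side of the expression for $\ddt c$ is non-negative.

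The main obstacle is really just bookkeeping: one must be careful about the cancellation of the two $u\Delta u$ terms (one from differentiating $u e^{-u}$ against $\Delta u + u - c$, the other from the volume form evolution), and about tracking the factors of $V$ when passing between $c$, $\int u e^{-u}\,dm$, and the normalized form of the Poincar\'e inequality. Once these are straight, the Poincar\'e inequality closes the argument in a single line.
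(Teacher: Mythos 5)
Your proposal is correct and follows essentially the same line of reasoning as the paper: differentiate $\int_M u e^{-u}\,dm$ directly using the evolution equation for $u$ and the volume-form evolution $\ddt dm = (\Delta u)\,dm$, observe that the two $u\Delta u$ terms cancel, integrate $\int \Delta u\, e^{-u}\,dm$ by parts to $\int |\nabla u|^2 e^{-u}\,dm$, and then close with the weighted Poincar\'e inequality of Lemma~\ref{poincare} applied to $f=u$. You are in fact slightly more careful about the factors of $V$ than the paper's own exposition (which writes $c = \int_M u e^{-u}\,dm$ where the normalization forces $c = \frac{1}{V}\int_M u e^{-u}\,dm$), but this is a harmless bookkeeping issue and the monotonicity of $c$ is unaffected since $V$ is constant along the normalized flow.
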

\begin{proof}
	We simply compute
	\[ \begin{aligned}
		\frac{d}{dt} \int_M ue^{-u}\,dm &= \int_M (\Delta
		u + u -c -u\Delta u -u^2 + cu + u\Delta u)
		e^{-u}\,dm \\
		&= \int_M (\Delta u - u^2 + cu)e^{-u}\,dm \\
		&= \int_M |\nabla u|^2 e^{-u}\,dm - \int_M
		u^2e^{-u}\,dm + \frac{1}{V}\left(\int_M
		ue^{-u}\,dm\right)^2,
	\end{aligned}\]
	but this last expression is non-negative by the weighted
	Poincar\'e inequality applied to $f=u$. 
\end{proof}

Next we need to find evolution equations for $|\nabla u|^2$ and $\Delta
u$. Standard calculations give the following.  
\begin{lem}\label{ev equations}
	We have
	\[ \begin{aligned}
		\ddt |\nabla u|^2 &= \Delta |\nabla u|^2 + 
		|\nabla u|^2 - \alpha^{j\bar k}\nabla_ju\nabla_{\bar k}u
		- |\nabla\nabla u|^2 - |\nabla\overline{\nabla}u|^2 \\
		 \ddt \Delta u &= \Delta(\Delta u) + \Delta u -
		|\nabla\overline{\nabla}u|^2.
	\end{aligned}\]
\end{lem}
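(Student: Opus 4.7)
The plan is to differentiate each quantity in time, reducing everything to the evolution of $u$ given in \eqref{u evolve}, the evolution of the inverse metric (read off from the tKRF \eqref{eq:twistedKRF}), the Kähler Bochner formula, and the defining equation $R_{j\bar k} = g_{j\bar k} + \alpha_{j\bar k} - u_{j\bar k}$ for the twisted Ricci potential. First I would record the auxiliary identity
\[
\ddt g^{j\bar k} = -g^{j\bar a}g^{b\bar k}\ddt g_{b\bar a} = -g^{j\bar k} - \alpha^{j\bar k} + R^{j\bar k},
\]
which comes directly from $\ddt g_{b\bar a} = g_{b\bar a} + \alpha_{b\bar a} - R_{b\bar a}$.

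For the first evolution equation, I would differentiate $|\nabla u|^2 = g^{j\bar k}u_j u_{\bar k}$ in $t$. The metric derivative contributes $-|\nabla u|^2 - \alpha^{j\bar k}u_j u_{\bar k} + R^{j\bar k}u_j u_{\bar k}$. Since $u_j$ is a scalar derivative, $\partial_t u_j = \nabla_j(\Delta u + u - c(t))$, so the remaining terms give $2|\nabla u|^2 + 2\,\mathrm{Re}\,\langle \nabla u,\nabla\Delta u\rangle$. Applying the Kähler Bochner identity
\[
\Delta |\nabla u|^2 = |\nabla\nabla u|^2 + |\nabla\bar\nabla u|^2 + R^{j\bar k}u_j u_{\bar k} + 2\,\mathrm{Re}\,\langle \nabla u,\nabla \Delta u\rangle
\]
eliminates both the Ricci term and the $\nabla \Delta u$ term, yielding exactly the stated formula. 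Notice that the $\alpha$-term appears only once, and with the right sign, because it enters only through the metric variation $\partial_t g^{j\bar k}$.

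For the second equation, I would differentiate $\Delta u = g^{j\bar k}u_{j\bar k}$. The metric variation contributes $-\Delta u - \alpha^{j\bar k}u_{j\bar k} + R^{j\bar k}u_{j\bar k}$, while the second piece $g^{j\bar k}\partial_t u_{j\bar k} = \Delta(\partial_t u) = \Delta(\Delta u) + \Delta u - 0$ (the constant $c(t)$ drops out). I would then use the defining equation $R_{j\bar k} = g_{j\bar k} + \alpha_{j\bar k} - u_{j\bar k}$ to expand
\[
R^{j\bar k}u_{j\bar k} = \Delta u + \alpha^{j\bar k}u_{j\bar k} - |\nabla\bar\nabla u|^2,
\]
at which point the $-\Delta u$, $-\alpha^{j\bar k}u_{j\bar k}$, and $+\Delta u$ terms cancel the matching pieces, leaving $\Delta(\Delta u) + \Delta u - |\nabla\bar\nabla u|^2$.

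Neither identity has a genuine obstacle; the calculation is just the standard one for the Kähler--Ricci flow with the added bookkeeping that $\alpha$ produces extra $\alpha^{j\bar k}$ terms from the metric's evolution and from the twisted Ricci equation. The only point requiring care is to ensure that the Ricci contractions generated by the metric variation combine correctly with the Bochner formula (in the first equation) and with the twisted Einstein-type equation for $u_{j\bar k}$ (in the second); this is exactly where the closedness of $\alpha$ implicitly enters, since it is what makes $u$ well-defined as a scalar potential.
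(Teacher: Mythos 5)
Your computation is correct. The paper gives no proof of this lemma, stating only that it follows from ``standard calculations,'' and your argument is exactly the standard one: differentiate $|\nabla u|^2 = g^{j\bar k}u_ju_{\bar k}$ and $\Delta u = g^{j\bar k}u_{j\bar k}$ in time using $\ddt g^{j\bar k} = -g^{j\bar k}-\alpha^{j\bar k}+R^{j\bar k}$ and $\ddt u = \Delta u + u - c(t)$, then in the first equation absorb the $R^{j\bar k}u_ju_{\bar k}$ and $2\,\mathrm{Re}\langle\nabla u,\nabla\Delta u\rangle$ terms with the Kähler Bochner identity, and in the second substitute $R_{j\bar k} = g_{j\bar k}+\alpha_{j\bar k}-u_{j\bar k}$ to cancel the $\alpha$ contributions.
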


Before proceeding, we note the following lemma, which follows immediately
from the maximum principle applied to the second equation in the preceding lemma .
\begin{lem}\label{laplace upper bd}
Along the twisted K\"ahler-Ricci flow we have 
\begin{equation*}
R-\Tr_{g}\alpha \geq -\max(R-\Tr_{g}\alpha)^{-}(0).
\end{equation*}
In particular, $\Delta u \leq n + \max(R-\Tr_{g}\alpha)^{-}(0)$.
\end{lem}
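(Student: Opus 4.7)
The plan is to reduce the lemma to a maximum-principle bound on $\Delta u$, which is then obtained from the second evolution equation in Lemma~\ref{ev equations} combined with the K\"ahler Cauchy--Schwarz inequality $|\nabla\bar\nabla u|^{2} \geq (\Delta u)^{2}/n$. First I would trace the defining relation $\ddb u = \omega + \alpha - \mathrm{Ric}(\omega)$ with $g^{i\bar j}$ to obtain the pointwise identity
\[ \Delta u \;=\; n - (R - \Tr_{g}\alpha). \]
This shows that the two conclusions of the lemma are equivalent, so it suffices to prove the upper bound $\Delta u(t) \leq n + \max(R-\Tr_{g}\alpha)^{-}(0)$ for all $t\geq 0$.

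Next I would apply the parabolic maximum principle to
\[ \ddt \Delta u \;=\; \Delta(\Delta u) + \Delta u - |\nabla\bar\nabla u|^{2}. \]
The inequality $|\nabla\bar\nabla u|^{2} \geq (\Delta u)^{2}/n$, which is Cauchy--Schwarz applied to the trace of the $(1,1)$-Hessian $u_{,i\bar j}$, turns this, at a spatial maximum, into the forward ODI
\[ \frac{d^{+}}{dt} m(t) \;\leq\; m(t) - \frac{m(t)^{2}}{n}, \qquad m(t) := \max_{M} \Delta u(\cdot, t). \]
Since $y - y^{2}/n \leq 0$ whenever $y \geq n$, the function $m$ cannot cross the level $n$ from below along the flow, so $m(t) \leq \max\bigl(n, m(0)\bigr)$ for as long as the flow exists.

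Finally, translating back through the identity of the first step, $m(0) = n - \min_{M}(R - \Tr_{g}\alpha)(0)$, so
\[ \max\bigl(n, m(0)\bigr) \;=\; n + \max\bigl(R - \Tr_{g}\alpha\bigr)^{-}(0), \]
regardless of the sign of $\min_{M}(R-\Tr_{g}\alpha)(0)$. Inverting $\Delta u = n - (R-\Tr_{g}\alpha)$ then yields both assertions of the lemma. The only technical care needed is the standard justification of the forward ODI at a moving spatial maximum (either via Hamilton's scalar maximum principle or by perturbing $\Delta u \mapsto \Delta u - \varepsilon t$ and letting $\varepsilon\to 0$), which I do not expect to be a genuine obstacle; indeed, this is the sense in which the author can legitimately call the conclusion \emph{immediate} from the evolution equation.
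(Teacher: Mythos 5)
Your proof is correct and is precisely the elaboration of the paper's one-line remark that the lemma ``follows immediately from the maximum principle applied to the second equation'' of Lemma~\ref{ev equations}: you supply the two ingredients needed to make that one-liner work, namely the trace identity $\Delta u = n - (R-\Tr_g\alpha)$ and the pointwise Cauchy--Schwarz bound $|\nabla\bar\nabla u|^2\geq(\Delta u)^2/n$, and the resulting ODI $\frac{d^+}{dt}m\leq m - m^2/n$ gives $m(t)\leq\max(n,m(0))$ exactly as you say. No gap; this matches the intended argument.
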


\begin{lem}
	Along the twisted K\"ahler-Ricci flow we have the lower 
	bound $$u(t) >-n+c(0) - \max(R-\Tr_{g}\alpha)^{-}(0)).$$
\end{lem}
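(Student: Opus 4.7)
The plan is to apply the parabolic maximum principle directly to the Ricci potential $u$, combining its evolution equation $\ddt u = \Delta u + u - c(t)$ with the two key inputs already established above: the upper bound $\Delta u \leq n + \max(R - \Tr_g \alpha)^-(0) =: M$ from the preceding lemma, and the monotonicity $\dot c(t) \geq 0$ together with $c(t) \leq 0$.

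At a spatial minimum point $x_t$ of $u(\cdot,t)$, positive semi-definiteness of the Hessian gives $\Delta u(x_t,t) \geq 0$, which combined with $\Delta u \leq M$ pinches $\Delta u(x_t,t) \in [0,M]$. Substituting into the evolution equation yields
\[ \frac{d}{dt} u_{\min}(t) = \Delta u(x_t,t) + u_{\min}(t) - c(t) \in \bigl[\, u_{\min}(t)-c(t),\ M + u_{\min}(t)-c(t)\bigr]. \]
To capture the full statement I would introduce the auxiliary quantity $H(x,t) := u(x,t) - c(t) + M$, whose positivity implies $u > c(t)-M \geq c(0)-M$ by the monotonicity of $c$, and hence gives the claimed bound. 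A short computation yields
\[ \ddt H = \Delta H + H - M - \dot c(t), \]
and the parabolic maximum principle, together with the pinch $\Delta H = \Delta u \in [0,M]$ at a spatial minimum and $\dot c \geq 0$, is to be used to close a differential inequality that preserves $H > 0$ in time from initial positivity. Combining $H > 0$ with $c(t) \geq c(0)$ then gives $u(t) > c(0) - M = -n + c(0) - \max(R - \Tr_g\alpha)^-(0)$.

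The main obstacle is that a naive maximum principle applied to $u$ alone only produces the exponentially degrading lower bound $u_{\min}(t) \geq e^{t}u_{\min}(0)$, which is not uniform in $t$. The delicate point is to use the uniform upper bound $\Delta u \leq M$ and the monotonicity of $c$ \emph{simultaneously}, and in the right combination, to obtain a time-independent differential inequality. Verifying initial positivity of $H$ at $t = 0$ uses the normalization $\int_M e^{-u(0)}\,dm = V$ (so that $u(0)$ vanishes somewhere) together with the upper bound on $\Delta u(0)$; this is the place where the specific form of $M$, reflecting the initial value of $\max(R - \Tr_g \alpha)^-$, enters the final constant.
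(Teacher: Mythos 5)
Your proposal takes a genuinely different route from the paper, and unfortunately the route does not close. The issue is the sign of the drift term at the critical level. You correctly compute that $H = u - c(t) + M$ satisfies $\ddt H = \Delta H + H - M - \dot c(t)$, and you correctly note that at a spatial minimum $\Delta H = \Delta u \in [0,M]$. But now suppose $H_{\min}$ touches $0$ at some time. At that point and time $H = 0$, so
\[ \ddt H_{\min} = \Delta H - M - \dot c(t) \leq M - M - \dot c(t) = -\dot c(t) \leq 0, \]
which is the \emph{wrong} sign: the maximum principle would need $\ddt H_{\min}\geq 0$ at the threshold to preserve positivity, but the evolution pushes $H$ downward there. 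The linear term $+H$ in the evolution, which is what should help, vanishes precisely at the critical level $H=0$, and the remaining terms $-M - \dot c$ are non-positive. So there is no barrier argument here; the dynamics near $H=0$ is unstable in the downward direction, and a forward-in-time comparison cannot preserve a lower bound by itself. A second, more minor, gap is that initial positivity of $H$ at $t=0$ is not verified: $H(\cdot,0) > 0$ is exactly the desired inequality at $t=0$, and it does not follow from the normalization that $u(0)$ vanishes somewhere together with the bound on $\Delta u(0)$.

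The paper's argument is of a different character and exploits precisely the instability you are fighting against. It first invokes the qualitative bound $u(t) > -B$ from Perelman's argument as adapted by Sesum--Tian --- a step your proposal omits entirely. It then derives the \emph{pointwise upper} differential inequality $\ddt u \leq u + C$ with $C = n - c(0) + \max(R-\Tr_g\alpha)^-(0)$, using $\Delta u = n + \Tr_g\alpha - R \leq n + \max(R - \Tr_g\alpha)^-(0)$ and $c(t)\geq c(0)$. Gronwall then gives $u(x,t) < e^{t-t_0}(u(x,t_0)+C) - C$ for $t > t_0$. If $u(x,t_0) < -C$ at any point, this forces $u(x,t)\to -\infty$ as $t\to\infty$, contradicting the a priori bound $u > -B$. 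So $u(t_0) \geq -C$ for every $t_0$. The unstable drift is thus used as a feature, not a bug: a value of $u$ below $-C$ cannot exist because it would escape to $-\infty$, and the qualitative bound rules out escape. To salvage your write-up you would need to (a) cite the Sesum--Tian qualitative bound, and (b) replace the maximum-principle step with this backward-contradiction argument.
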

\begin{proof}
The argument of Perelman, as communicated by Sesum-Tian \cite{ST} carries over to prove that
 $u(t)>-B$ for some constant $B>0$. In order to obtain the effective estimate, we observe that 
by equation~\eqref{u evolve} and the monotonicity of $c(t)$ we have
\begin{equation*}
\ddt u = n -c(t)+\Tr_{g}\alpha - R + u  \leq n-c(0) + \max(R-\Tr_{g}\alpha)^{-}(0)+u.
\end{equation*}
Set $C=n-c(0) + \max(R-\Tr_{g}\alpha)^{-}(0)$, then it follows that, for any $t_{0}$
\begin{equation*}
u(t) < e^{t-t_{0}}(u(t_{0}) +C) - C
\end{equation*}
The lower bound for $u$ implies that, at $t_{0}$, we have
\begin{equation*}
u(t_{0}) \geq -C = -n+c(0) - \max(R-\Tr_{g}\alpha)^{-}(0).
\end{equation*}
Since $t_{0}$ was arbitrary, the lemma follows.
\end{proof}
%Next we need to find evolution equations for $|\nabla u|^2$ and $\Delta
%u$. Standard calculations give the following.  
%\begin{lem}
%	We have
%	\[ \begin{aligned}
%		\ddt |\nabla u|^2 &= \Delta |\nabla u|^2 + 
%		|\nabla u|^2 - \alpha^{j\bar k}\nabla_ju\nabla_{\bar k}u
%		- |\nabla\nabla u|^2 - |\nabla\overline{\nabla}u|^2 \\
%		 \ddt \Delta u &= \Delta(\Delta u) + \Delta u -
%		|\nabla\overline{\nabla}u|^2.
%	\end{aligned}\]
%\end{lem}

Using the evolution equations in Lemma~\ref{ev equations} the proof of the following lemma is
identical to that in \cite{ST}, again using that $\alpha$ is non-negative.
The effective bounds can be read off directly from the proof. 
\begin{lem}\label{lem:ST1}
	Let $B= n - c(0) + \max_{M} (R- \Tr_{g}\alpha)^{-}(0)$.  Then we have
	\[ \begin{aligned}
		|\nabla u|^2 &< 200B(u+200B) \\
		|\Delta u| &< 200B(u+200B). 
	\end{aligned}\]
\end{lem}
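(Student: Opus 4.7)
The plan is to adapt Perelman's parabolic maximum-principle argument, as implemented by Sesum--Tian \cite{ST}, to the twisted setting, taking advantage of the fact that the new $-\alpha^{j\bar k}\nabla_j u\nabla_{\bar k}u$ term appearing in the evolution of $|\nabla u|^2$ (Lemma~\ref{ev equations}) carries a \emph{favorable} sign whenever $\alpha\geq 0$. The key input is the pointwise lower bound $u>-B$ from the previous lemma, which guarantees that $u+200B>199B>0$ everywhere, so that division by $u+200B$ is legitimate.

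For the gradient estimate I would apply the parabolic maximum principle to the auxiliary function
\[
H := \frac{|\nabla u|^{2}}{u+200B}
\]
on $M\times[0,T]$. If $H$ attains a positive maximum at an interior point $(x_{0},t_{0})$, then $\ddt H\geq 0$, $\nabla H=0$ and $\Delta H\leq 0$ at $(x_{0},t_{0})$. Expanding $\ddt H-\Delta H$ via the evolution equations from Lemma~\ref{ev equations} and equation~\eqref{u evolve}, and using $\nabla|\nabla u|^{2}=H\nabla u$ at the maximum, one obtains a purely algebraic inequality after discarding the \emph{non-positive} contributions $-\alpha^{j\bar k}\nabla_j u\nabla_{\bar k}u$, $-|\nabla\nabla u|^{2}$ and $-|\nabla\overline{\nabla}u|^{2}$. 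This inequality forces $H(x_{0},t_{0})\leq c_{1}B$ for a computable universal constant $c_{1}$, and the factor $200$ is chosen so that $c_{1}\leq 200$; this yields $|\nabla u|^{2}<200B(u+200B)$ on all of $M\times[0,T]$.

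For the Laplacian estimate, the upper bound $\Delta u\leq n+\max(R-\Tr_{g}\alpha)^{-}(0)\leq B$ follows directly from Lemma~\ref{laplace upper bd} and is much stronger than the claim, so only a lower bound on $\Delta u$ requires work. For this it is convenient to work with the combination $|\nabla u|^{2}-\Delta u$, whose evolution equation is
\[
\ddt\bigl(|\nabla u|^{2}-\Delta u\bigr)=\Delta\bigl(|\nabla u|^{2}-\Delta u\bigr)+\bigl(|\nabla u|^{2}-\Delta u\bigr)-\alpha^{j\bar k}\nabla_j u\nabla_{\bar k}u-|\nabla\nabla u|^{2},
\]
the crucial observation being that the awkward $|\nabla\overline{\nabla}u|^{2}$ terms cancel exactly between the two evolution equations of Lemma~\ref{ev equations}. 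Running the same maximum-principle argument on
\[
\widetilde H:=\frac{|\nabla u|^{2}-\Delta u}{u+200B},
\]
and again discarding the favorable $-\alpha^{j\bar k}\nabla_j u\nabla_{\bar k}u$ and $-|\nabla\nabla u|^{2}$ contributions, one bounds $\widetilde H$ by a universal multiple of $B$ which the factor $200$ absorbs. Since $|\nabla u|^{2}\geq 0$, this immediately gives $-\Delta u<200B(u+200B)$, completing the proof.

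The main technical obstacle is simply the bookkeeping: one must verify that the cross terms produced by substituting $\nabla|\nabla u|^{2}=H\nabla u$ (respectively the analogous identity for $\widetilde H$) into the evolution equations fit inside the factor $200$. Because $\alpha\geq 0$ only ever enters with a favorable sign and can be thrown away at every step, the argument runs in parallel with the untwisted case treated in \cite{ST}, with no new analytic difficulty introduced by the twisting form.
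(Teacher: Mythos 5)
Your overall strategy---applying the parabolic maximum principle to $H = |\nabla u|^2/(u + 200B)$ and a Laplacian analogue---is the correct one, and it matches the Perelman/Sesum--Tian argument to which the paper simply defers (the paper gives no independent proof, stating only that the argument is ``identical to that in \cite{ST}'' with the effective constants ``read off directly from the proof'').

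There is, however, a genuine gap in your outline: you propose to \emph{discard} the Hessian terms $-|\nabla\nabla u|^2$ and $-|\nabla\overline{\nabla}u|^2$ on the grounds that they carry a favorable sign. These terms are indispensable. At an interior space-time maximum of $H$ one has $\nabla H = 0$, $\partial_t H \geq 0$, $\Delta H \leq 0$, and the quotient rule gives
\[
0 \leq (\partial_t - \Delta)H = \frac{(\partial_t - \Delta)|\nabla u|^2}{u+200B} - H\,\frac{(\partial_t - \Delta)u}{u+200B}.
\]
Plugging in Lemma~\ref{ev equations} and $(\partial_t - \Delta)u = u - c(t)$ from \eqref{u evolve}, the non-Hessian, non-$\alpha$ contributions reduce to $H\cdot(200B + c(t))/(u + 200B)$, which is \emph{positive} (since $c(t)\geq c(0) \geq -B$). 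So if the Hessian terms are truly discarded, the inequality degenerates to $0 \leq H\cdot(\text{positive})$, which is vacuous and yields no bound on $H$ at all. The point of the identity $\nabla|\nabla u|^{2}=H\nabla u$ at the maximum (which you correctly invoke but then do not actually use) is precisely to derive a lower bound of the form $|\nabla\nabla u|^2 + |\nabla\overline{\nabla}u|^2 \gtrsim H^2$; retaining the Hessian terms then injects a $-H^2$ contribution, and it is this term that forces $H \lesssim B$. The same criticism applies to your Laplacian argument: the $-|\nabla\nabla u|^2$ term in the evolution of $|\nabla u|^2 - \Delta u$ cannot be dropped, and in Sesum--Tian the Laplacian estimate is in fact run \emph{after} the gradient estimate, using the latter as an input. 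Your secondary observations---that $\alpha\geq 0$ lets one drop the twisting term, that $|\nabla\overline{\nabla}u|^2$ cancels exactly in $|\nabla u|^2-\Delta u$, and that the lower bound $u>-B$ keeps the denominator positive---are correct and consistent with the intended argument.
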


Let us assume now that we have a constant $K$ such that
\[ |\Delta u|, |\nabla u|^2 < Ku,\text{ wherever } u > K. \]
We can take $K=400B$ with the $B$ as in the previous lemma. For any
two numbers $a < b$ define the set
\[ M(a,b) = \{x\in M\,|\,a < u(x) < b\}.\]
These sets will be used instead of the geodesic annuli in \cite{ST}. 

\begin{lem}
	There is a constant $\kappa_{1} > 0$ such that if $a > K$ and $b >
	a+2$ then
	\[ Vol(M(a,b)) > \kappa_{1} a^{-n}, \]
	as long as there is a point $x$ with $u(x)=a+1$. 
\end{lem}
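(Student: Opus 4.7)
The plan is to exploit the gradient and Laplacian bounds of Lemma~\ref{lem:ST1} to produce a geodesic ball about the prescribed point $x$ of radius $r \sim a^{-1/2}$ that lies entirely inside $M(a,b)$, and then to invoke the non-collapsing estimate of Proposition~\ref{non collapsing} to bound its volume from below by a multiple of $r^{2n}$.

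First I would set $r = \frac{1}{4\sqrt{K(a+2)}}$ and prove $B(x,r) \subset \{u \in (a,a+2)\}$ by a continuity argument along minimizing unit-speed geodesics $\gamma$ emanating from $x$. So long as $u\circ\gamma \in (a,a+2)$ one has $u > K$ on the image of $\gamma$, so Lemma~\ref{lem:ST1} gives $|\nabla u| \leq \sqrt{Ku} \leq \sqrt{K(a+2)}$; integrating this along $\gamma$, the oscillation of $u$ from its value $u(x)=a+1$ is at most $r\sqrt{K(a+2)}=1/4$, which strictly confines $u\circ\gamma$ to $(a+3/4,\, a+5/4)$. A standard open/closed argument then yields $u(y) \in (a,a+2)$ for every $y \in B(x,r)$, and hence $B(x,r) \subset M(a,b)$ whenever $b > a+2$.

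Second, I would control the scalar curvature on this same ball. Tracing the defining equation $\ddb u = \omega + \alpha - \mathrm{Ric}(\omega)$ with respect to $\omega$ yields $\Delta u = n - (R - \Tr_g \alpha)$, so the Laplacian bound $|\Delta u| \leq Ku \leq K(a+2)$ from Lemma~\ref{lem:ST1} converts, up to matching real and complex conventions, into $|R - \Tr_g \beta| \leq C(n + K(a+2))$ on $B(x,r)$. With our choice of $r$ this gives $r^{2}|R - \Tr_g \beta| \leq K'$ uniformly in $a > K$, for a constant $K'$ depending only on $n$ and $K$.

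Third, I would fix $\rho > 0$ once and for all so that $r \leq \rho$ for every admissible $a$; since $a > K$ forces $r \leq 1/(4K)$, any $\rho \geq 1/(4K)$ works, and automatically $r \leq \rho \leq e^{t/2}\rho$ for every $t \geq 0$. Applying Proposition~\ref{non collapsing} at the current time gives
\[ Vol(B(x,r)) \geq \kappa(K',\rho)\, r^{2n} \geq \kappa_1\, a^{-n} \]
for a constant $\kappa_1 > 0$ depending only on $n$, $K$ and the initial data. Combined with $B(x,r) \subset M(a,b)$ this proves the lemma. The only real subtlety is the bootstrap in the first step: the gradient bound of Lemma~\ref{lem:ST1} only holds in the region $\{u > K\}$, which is precisely what the open/closed continuity argument has to establish along each geodesic; the $1/4$ buffer built into the choice of $r$ is what makes that argument close cleanly. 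The remaining steps are routine scaling and a direct appeal to the non-collapsing estimate.
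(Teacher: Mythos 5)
Your proposal is correct and follows essentially the same route as the paper: use the gradient bound from Lemma~\ref{lem:ST1} to show a ball of radius comparable to $(K(a+2))^{-1/2}$ about $x$ sits inside $M(a,a+2)$, use the Laplacian bound to control $R - \Tr_g\beta$ there, and apply Proposition~\ref{non collapsing}. The paper states the inclusion $B(x, (K(a+2))^{-1/2}) \subset M(a,a+2)$ without spelling out the open/closed bootstrap along geodesics, so your extra care there (at the cost of an inessential factor of $4$ in the radius) is just making explicit what the paper leaves implicit.
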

\begin{proof}
	On the set $M(a,a+2)$ we have $|\nabla u| < \sqrt{K(a+2)}$, so
	$M(a,a+2)$ contains the ball of radius
	\[ \frac{1}{\sqrt{K(a+2)}} \]
	around the point $x$. At the same time, on this ball we have
	\[ |\Delta u| < K(a+2), \]
	so by non-collapsing estimate in Proposition~\ref{non collapsing} the volume of this ball is at least
	$\kappa(n+1, 1) [K(a+2)]^{-n}$. This in turn is at
	least $\kappa_{1} a^{-n}$ for some other constant $\kappa_{1}$.
\end{proof}

\begin{lem}\label{lem:vol}
	Let $0<\epsilon <1$ and $k > \max\{\log_2 \kappa_{1}^{-1/n},2\}$. 
	Suppose that 
	\[ Vol(M(2^{k},2^{10k}))<\epsilon,\]
	and $u(x) > 2^{10k}$ for some $x$. 
	Then there exist integers $k_1,k_2\in[k,10k]$ with 
	$k_2 > k_1+4$ such that
	\[ \begin{aligned}
		Vol(M(2^{k_1},2^{k_2})) &< \epsilon, \\
		Vol(M(2^{k_1+2},2^{k_2-2})) &> 2^{-3n}
		Vol(M(2^{k_1},2^{k_2})).
	\end{aligned}\]
\end{lem}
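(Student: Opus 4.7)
I would argue by contradiction, combined with a monotone iteration of the failure of the second condition. Suppose no pair $(k_1,k_2)\in[k,10k]^2$ with $k_2>k_1+4$ satisfies both stated volume inequalities. For any such pair, $M(2^{k_1},2^{k_2})\subset M(2^k,2^{10k})$, so the first inequality $Vol(M(2^{k_1},2^{k_2}))<\epsilon$ is automatic from the hypothesis on $Vol(M(2^k,2^{10k}))$. Hence the second inequality must fail for every admissible pair, giving
\begin{equation*}
Vol(M(2^{k_1+2},2^{k_2-2}))\leq 2^{-3n}Vol(M(2^{k_1},2^{k_2})).
\end{equation*}

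Next I would iterate this contraction starting from $(k_1,k_2)=(k,10k)$. After $j$ applications---each shrinking the interval by $2$ on either side and gaining a factor $2^{-3n}$, legal so long as the shrunken pair still has gap $>4$, i.e., $j<9k/4$---the pair becomes $(k+2j,10k-2j)$ and
\begin{equation*}
Vol(M(2^{k+2j},2^{10k-2j}))\leq 2^{-3nj}Vol(M(2^k,2^{10k}))<2^{-3nj}\epsilon.
\end{equation*}
Taking $j=2k$ (valid since $2k<9k/4$, with final residual gap $9k-4(2k-1)=k+4>4$) yields the pair $(5k,6k)$ and
\begin{equation*}
Vol(M(2^{5k},2^{6k}))<2^{-6nk}\epsilon.
\end{equation*}

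To contradict this, I invoke the previous lemma. The normalization $\int_M e^{-u}\,dm=Vol(M)$ together with Jensen's inequality forces $u$ to vanish somewhere, while by hypothesis $u(x)>2^{10k}$ at some point $x$; continuity then supplies a point where $u=2^{5k}+1$. The previous lemma, applied with $a=2^{5k}$ (which is large enough that $a>K$ under the standing assumption that $k$ is large, and satisfies $a+2\leq 2^{5k+2}$ trivially), then gives
\begin{equation*}
Vol(M(2^{5k},2^{5k+2}))>\kappa_1\cdot 2^{-5nk}.
\end{equation*}
Since $5k+2\leq 6k$ for $k\geq 2$, this set sits inside $M(2^{5k},2^{6k})$, so combining with the iterated upper bound gives $\kappa_1\cdot 2^{nk}<\epsilon<1$. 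But the hypothesis $k>\log_2\kappa_1^{-1/n}$ is precisely $\kappa_1\cdot 2^{nk}>1$, the desired contradiction.

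The only real obstacle is the bookkeeping required to make the iteration reach exactly $j=2k$ while the gap stays strictly above $4$, and to verify that the intermediate scale $a=2^{5k}$ is large enough to apply the previous lemma; both are guaranteed by the lower bounds on $k$ imposed in the statement.
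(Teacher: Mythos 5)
Your proof is correct and takes essentially the same approach as the paper's: argue by contradiction, observe that the first volume bound is automatic, iterate the negated second bound $2k$ times to gain a factor $2^{-6nk}$, invoke the preceding lemma at level $a = 2^{5k}$ via the intermediate value theorem, and contradict $\kappa_1 2^{nk} > 1$. The only cosmetic difference is that you iterate from $(k,10k)$ down to $(5k,6k)$, while the paper starts from $(k,9k+2)$ and lands exactly on $(5k,5k+2)$; both work since $5k+2\leqslant 6k$ for $k\geqslant 2$.
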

\begin{proof}
	The first condition is true for any $k_1,k_2\in[k,10k]$ 
	by hypothesis. We claim that for some integer $p\in[0,2k-1]$ we
	also have 
	\[ 
	Vol(M(2^{k+2p}, 2^{9k+2-2p})) <
	2^{3n}Vol(M(2^{k+2p+2},2^{9k-2p})),
	\]
	in which case we could choose $k_1 = k+2p$ and $k_2=9k+2-2p$. 
	Otherwise we would have 
	\[\begin{aligned}
		1 > \epsilon &> Vol(M(2^{k},2^{9k+2})) \geqslant
		2^{3n} Vol(M(2^{k+2},
	2^{9k})) \geqslant \ldots \\
	&\geqslant 2^{6nk} Vol(M(2^{5k},2^{5k+2})) \geqslant 2^{6nk}\kappa_{1}
	2^{-5nk} = 2^{nk}\kappa_{1}.
	\end{aligned}\]
	Since by our assumption $2^{nk}\kappa_{1} > 1$, we get a contradiction.
\end{proof}

\begin{lem} \label{lem:intR} If $k_2 > k_1+1$, then 
	we can find $r_1\in[2^{k_1},2^{k_1+1}]$ and $r_2\in
	[2^{k_2-1},2^{k_2}]$ such that
	\[ \int_{M(r_1,r_2)} -\Delta u\,dm < CV,\]
	for some fixed constant $C$, where
	\[ V = Vol(M(2^{k_1},2^{k_2})).\]
\end{lem}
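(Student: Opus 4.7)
The plan is to combine the coarea formula, a mean value argument, and the divergence theorem. The key observation is that if $r_1$ and $r_2$ are regular values of $u$ (which exist for almost every value by Sard's theorem, so the level sets are smooth hypersurfaces), then by Stokes' theorem
\[ \int_{M(r_1, r_2)} -\Delta u \, dm \;=\; \int_{\{u = r_1\}} |\nabla u| \, d\sigma \;-\; \int_{\{u = r_2\}} |\nabla u| \, d\sigma, \]
since the outward unit normal to $M(r_1, r_2)$ is $-\nabla u/|\nabla u|$ on $\{u = r_1\}$ and $+\nabla u/|\nabla u|$ on $\{u = r_2\}$. The boundary term at $r_2$ has the favorable sign and can be dropped, so it suffices to choose $r_1$ making the level-set integral of $|\nabla u|$ at most $O(V)$.

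To select such an $r_1$, I would apply the coarea formula to $|\nabla u|^2$ on the slab $M(2^{k_1}, 2^{k_1 + 1})$:
\[ \int_{M(2^{k_1}, 2^{k_1 + 1})} |\nabla u|^2 \, dm \;=\; \int_{2^{k_1}}^{2^{k_1 + 1}} \int_{\{u = r\}} |\nabla u| \, d\sigma_r \, dr. \]
An application of the mean value theorem produces a regular value $r_1 \in [2^{k_1}, 2^{k_1 + 1}]$ with
\[ \int_{\{u = r_1\}} |\nabla u| \, d\sigma \;\leq\; \frac{1}{2^{k_1}} \int_{M(2^{k_1}, 2^{k_1 + 1})} |\nabla u|^2 \, dm. \]
Invoking the pointwise bound $|\nabla u|^2 \leq Ku \leq K \cdot 2^{k_1 + 1}$ from Lemma~\ref{lem:ST1} (which is valid on this slab provided $2^{k_1} > K$) and using $Vol(M(2^{k_1}, 2^{k_1 + 1})) \leq V$, the right-hand side is at most $2KV$. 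The identical argument yields a regular value $r_2 \in [2^{k_2 - 1}, 2^{k_2}]$ (for which an analogous bound holds, though it is not needed here).

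Plugging back into the Stokes identity,
\[ \int_{M(r_1, r_2)} -\Delta u \, dm \;\leq\; \int_{\{u = r_1\}} |\nabla u| \, d\sigma \;\leq\; 2KV, \]
so the lemma holds with $C = 2K$. No step is especially delicate; the only point requiring care is that the hypothesis $|\nabla u|^2 \leq Ku$ must actually be in force on the slab $M(2^{k_1}, 2^{k_1 + 1})$, which demands $2^{k_1}$ to exceed the constant $K$ from Lemma~\ref{lem:ST1}. This is implicit in the standing assumptions of the iterative scheme set up by the preceding lemmas.
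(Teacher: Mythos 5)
Your proof is correct and follows essentially the same route as the paper: the coarea formula and Sard's theorem to produce regular level values $r_1, r_2$ by a mean-value/averaging argument, the pointwise bound $|\nabla u|^2 \leq Ku$ on the slab to control $\int_{\{u=r_1\}}|\nabla u|\,d\sigma$, and the divergence theorem to convert the bulk integral of $-\Delta u$ into boundary flux. The only (cosmetic) difference is that you discard the boundary term at $\{u=r_2\}$ using its favorable sign, giving $C=2K$, whereas the paper bounds both boundary integrals by $2KV$ and obtains $C=4K$; either constant is acceptable since the lemma only asserts existence of some fixed $C$.
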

\begin{proof}
	On $M(2^{k_1},2^{k_1+1})$ we have $|\nabla u|^2 < 2^{k_1+1}K$,
	so
	\[ \int_{M(2^{k_1},2^{k_1+1})} |\nabla u|^2\,dm < 2^{k_1+1}KV.\]
	By the coarea formula we have
	\[ \int_{M(2^{k_1},2^{k_1+1})} |\nabla u|^2\,dm =
	\int_{2^{k_1}}^{2^{k_1+1}}\int_{\{u=t\}} |\nabla u|\,dS\,dt.\]
	Note that by Sard's theorem, the level sets $\{u=t\}$ are 
	smooth for almost all values of $t$, so that the integral on the right hand side
	makes sense.  
	It follows that there exists $r_1\in[2^{k_1},2^{k_1+1}]$ such
	that $\{u=r_{1}\}$ is smooth and
	\[ \int_{\{u=r_1\}} |\nabla u|\,dS < 2KV.\]
	Similarly there exists $r_2\in[2^{k_2-1},2^{k_2}]$ such that $\{u=r_{2}\}$ is smooth and
	\[ \int_{\{u=r_2\}} |\nabla u|\,dS < 2KV.\]
	It follows that
	\[ \begin{aligned} \int_{M(r_1,r_2)} -\Delta u\,dm 
		&\leqslant \int_{\{u=r_1\}} |\nabla u|\,dS +
		\int_{\{u=r_2\}} |\nabla u|\,dS \\
		&\leqslant 4KV.
	\end{aligned}\]
\end{proof}

\begin{prop}
	There exists a constant $\epsilon > 0$ 
	such that if for some 
	\[ k > \max\{\log_2 \kappa_{1}^{-1/n},2\}\]
	we have
	\[ Vol(M(2^k,2^{10k})) < \epsilon, \]
	then $u < 2^{10k}$ everywhere. 
\end{prop}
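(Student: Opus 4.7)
My plan is to argue by contradiction. Suppose $u(y)>2^{10k}$ for some $y\in M$. Feeding $k$ into Lemma~\ref{lem:vol} produces integers $k_1,k_2\in[k,10k]$ with $k_2>k_1+4$ such that $V := Vol(M(2^{k_1},2^{k_2}))<\epsilon$ while the inner set $M(2^{k_1+2},2^{k_2-2})$ still has volume at least $2^{-3n}V$. Applying Lemma~\ref{lem:intR} to this pair produces radii $r_1\in[2^{k_1},2^{k_1+1}]$ and $r_2\in[2^{k_2-1},2^{k_2}]$ with
\[
\int_{M(r_1,r_2)} (-\Delta u)\,dm < CV
\]
for a constant $C$ depending only on $K$. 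The strategy is to derive a contradiction by producing a matching lower bound of the form $\int_{M(r_1,r_2)}(-\Delta u)\,dm \geq c_0\cdot 2^{k_1}V$, which overwhelms $CV$ once $k$ is taken above a constant threshold depending on $n$ and $K$.

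The natural way to produce such a lower bound is via the evolution equation $\ddt u=\Delta u+u-c$, which rearranges to $-\Delta u=u-c-\ddt u$. Since $c\leq 0$ by Jensen's inequality, and since $M(r_1,r_2)\supset M(2^{k_1+2},2^{k_2-2})$, on which $u\geq 2^{k_1+2}$,
\[
\int_{M(r_1,r_2)} u\,dm \geq 2^{k_1+2-3n} V,
\]
so it would suffice to show $\int_{M(r_1,r_2)}\ddt u\,dm \leq C'V$ for a $k$-independent constant $C'$. Combining the two bounds with Lemma~\ref{lem:intR} would then give $2^{k_1}\leq 2^{3n-2}(C+C')$, which fails for $k$ beyond a constant threshold. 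Choosing $\epsilon$ small enough makes this threshold compatible with the stated constraint $k>\max\{\log_2\kappa_{1}^{-1/n},2\}$, and the contradiction forces $u<2^{10k}$ everywhere.

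The hard step is the uniform bound on $\int_{M(r_1,r_2)}\ddt u\,dm$: the pointwise estimate $\ddt u\leq u+O(1)$ coming from Lemma~\ref{laplace upper bd} only gives $O(2^{10k}V)$, which is too weak. I would instead multiply the evolution equation by a smooth cutoff $\chi(u)$ that rises from $0$ on $\{u\leq r_1\}$ to $1$ on $[r_1+1,r_2-1]$ and back to $0$ on $\{u\geq r_2\}$, and integrate. The Laplacian term transforms into $-\int \chi'(u)|\nabla u|^2\,dm$, which is controlled by $O(V)$ via the gradient bound $|\nabla u|^2 < K(u+K)$ of Lemma~\ref{lem:ST1} combined with a coarea/averaging argument over the thin slabs $[r_1,r_1+1]$ and $[r_2-1,r_2]$ that parallels the proof of Lemma~\ref{lem:intR}. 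Handling the $\ddt u$ contribution requires also accounting for the moving volume form $\ddt dm = \Delta u\cdot dm$ and introducing a time-primitive $\Phi$ with $\Phi'=\chi$, so that $\int\chi(u)\ddt u\,dm$ becomes the time derivative of $\int \Phi(u)\,dm$ up to a correction absorbed by the same IBP. Making this step completely rigorous with explicit constants, rather than just asymptotic orders, is the main technical task.
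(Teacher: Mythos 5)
Your proposal takes a genuinely different route from the paper, and unfortunately it contains a gap that I do not believe can be closed. The paper's proof never attempts to produce a lower bound for $\int_{M(r_1,r_2)}(-\Delta u)\,dm$. Instead it builds a test function $f = A\phi(u)$ (with $\phi$ a cutoff in the $u$-variable supported on $[2^{k_1+1},2^{k_2-1}]$, normalized so that $\int f^2\,dm=(2\pi)^n$), plugs it into the twisted $\mathcal{W}$-functional with $\tau=\tfrac12$, and invokes the lower bound on $\mu(g(t),\tfrac12)$ coming from the monotonicity of the twisted entropy. Each of the three terms in $\mathcal{W}$ is then estimated separately, using Lemma~\ref{lem:intR} for the $-\Delta u$ term, the gradient bound for the $|\nabla f|^2$ term, and an elementary bound for the $f^2\ln f^2$ term; the result is an upper bound $A<C_2$, which combined with $(2\pi)^n<A^2V$ gives $V>C_2^{-2}$, and one takes $\epsilon=C_2^{-2}$. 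Notice that this argument is essentially $k$-independent once $k_1,k_2$ are produced; the contradiction is with $Vol<\epsilon$, not with $k$ being large.

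Your strategy, by contrast, tries to derive a contradiction purely from the a~priori evolution equations and curvature bounds, without ever invoking the entropy lower bound. That should already be a warning sign, because the uniform boundedness of $u$ is precisely what Perelman's entropy argument is designed to produce and is not available from local estimates alone. More concretely, your pivotal step — showing $\int_{M(r_1,r_2)}\ddt u\,dm \leq C'V$ with $C'$ independent of $k$ — cannot be established. On $M(r_1,r_2)$ we have $u\geq 2^{k_1}$, and the identity $\ddt u=\Delta u+u-c$ together with the upper bound $\Delta u\leq n+\hat C$ shows that $\ddt u$ can be, and generically is, of the same order as $u$, i.e.\ $\sim 2^{k_1}$. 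In the extreme case $\Delta u\approx 0$ on $M(r_1,r_2)$, we get $\ddt u\approx u-c$ and $-\Delta u\approx 0$, so the ``lower bound'' $\int(-\Delta u)\gtrsim 2^{k_1}V$ you are aiming for simply fails: the two terms $\int u\,dm$ and $\int\ddt u\,dm$ cancel to leading order, which is exactly what the identity $-\Delta u=u-c-\ddt u$ says. Your IBP/time-primitive repair does not rescue this: the correction from the evolving volume form involves $\int\Phi(u)\Delta u\,dm$, and since $\Phi(u)=O(u)$ and $|\Delta u|=O(Ku)$, this term is $O(Ku^2V)\sim 2^{20k}V$, much larger than the $O(V)$ you need. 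Moreover $\tfrac{d}{dt}\int\Phi(u)\,dm$ itself has no a~priori bound of the required size. Finally, even if the estimate could somehow be forced through, your contradiction $2^{k_1}\leq 2^{3n-2}(C+C')$ only bites for $k$ above a $K$-dependent threshold and is independent of $\epsilon$; shrinking $\epsilon$ does not move that threshold, so the logic does not deliver the Proposition as stated. The missing ingredient is the entropy: you must use the lower bound on $\mu(g(t),\tfrac12)$ via a test function constructed from $\phi(u)$, as in the paper.
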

\begin{proof}
Suppose by contradiction that $u$ takes on larger values, so we can
find numbers $k_1, k_2$ satisfying the
conclusions of Lemma~\ref{lem:vol}, and then also
$r_1,r_2$ using Lemma~\ref{lem:intR}.
Let us choose a cutoff function $\phi$ on $\mathbf{R}$ such that
\[ \phi = \left\{\begin{aligned} 1\quad&\mathrm{in}\quad [2^{k_1+2},
	2^{k_2-2}] \\
	0\quad &\mathrm{outside}\quad [2^{k_1+1},2^{k_2-1}].
\end{aligned}\right.\]
We can do this in such a way that on the interval $[2^{k_1+1},
2^{k_1+2}]$ we have
\[ |\phi'(x)| < \frac{2}{2^{k_1+1}} \leqslant \frac{4}{x},\]
and also on the interval $[2^{k_2-2},2^{k_2-1}]$ we have
\[ |\phi'(x)| < \frac{2}{2^{k_2-2}} \leqslant \frac{4}{x}.\]
So in sum we have $|\phi'(x)| < 4/x$ for all $x$.

Let us now set $f = A\phi(u)$ for some constant $A$, normalised so that
\[ \int_M f^2\,dm = (2\pi)^n,\]
and we use this as a test function in the $\mathcal{W}$ functional, with
$\tau=\frac{1}{2}$. We have
\[\begin{aligned}
	(2\pi)^n = \int_M f^2\,dm &> A^2 Vol(M(2^{k_1+2},2^{k_2-2})) \\
	&>
A^2 2^{-3n}Vol(M(2^{k_1},2^{k_2}))
=: A^22^{-3n}V,
\end{aligned}\]
so
\[ A^2V < 2^{3n}(2\pi)^n. \]
The monotonicity of $\mu\left(g, \frac{1}{2}\right)$ implies that 
\begin{equation}\label{eq:Per}
	\int_M (-\Delta u)f^2 + 4|\nabla f|^2 - f^2\ln f^2 - 2n f^2\,dm > -C_1
\end{equation}
for some constant $C_1$.  Letting $\hat{C} = \hat{C}(g_{0})$ be the constant 
in Lemma~\ref{laplace upper bd}, so that $-\Delta u + \hat{C} \geq 0$.  We have
\begin{equation*}
\begin{aligned}
\int_M (-\Delta u)f^2\,dm &\leq \int_M (-\Delta u + \hat{C})f^2\,dm\\
&< A^2\int_{M(r_1,r_2)} (-\Delta u+ \hat{C})\,dm\\
& < A^2(CV + \hat{C}V) < 2^{3n}(2\pi)^n (C+ \hat{C}) ,
\end{aligned}
\end{equation*}
with the constant $C$ from Lemma~\ref{lem:intR}. Also 
\[ |\nabla f|^2 = A^2|\phi'(u)||\nabla u| < A^2\frac{4}{u}\sqrt{Ku} <
4A^2\sqrt{K},\]
assuming $u>1$ on $M(r_1,r_2)$, ie. if $r_1>1$. So we get
\[ \int_M 4|\nabla f|^2\,dm < 16A^2\sqrt{K}V < 2^{3n+4}(2\pi)^n\sqrt{K}.\]
Also
\[ \begin{aligned}
	\int_M -f^2\ln f^2\,dm &= A^2\int_M -\phi(u)^2\ln\phi(u)^2\,dm
	- \ln
	A^2\int_M f^2\,dm \\
	&< A^2V - 2\ln A \\
	&< 2^{3n}(2\pi)^n - 2\ln A.
\end{aligned}\]
We used that $-1 < x\ln x \leqslant 0$ for $x\in[0,1]$.
In sum from (\ref{eq:Per}) we get
\[2^{3n}(2\pi)^n (C+ \hat{C}) + 2^{3n+4}(2\pi)^n\sqrt{K} + 2^{3n}(2\pi)^n - 2n - 2\ln A 
> -C_1.\]
This gives an upper bound $A < C_2$. At the same time
\[ (2\pi)^n = \int_M f^2\,dm < A^2 V,\]
so we get $V > (2\pi)^nA^{-2} > C_2^{-2}$. Note that $C_2$ only depends on the initial
metric of the flow via the entropy and the constant $\hat{C}$. We can therefore choose $\epsilon = C_2^{-2}$.  
\end{proof}

\begin{cor} Along the flow, $u$ is uniformly bounded. 
\end{cor}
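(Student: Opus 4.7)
The plan is to combine the lower bound on $u$ established in the earlier lemma with a uniform upper bound obtained by iterating the preceding proposition at a cleverly chosen sequence of scales. The iteration should allow us to rule out any point where $u$ is too large, by forcing one of a collection of disjoint ``level-set annuli'' to be thin enough to trigger the proposition.

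For the upper bound, I would fix an integer $k_0 > \max\{\log_2 \kappa_{1}^{-1/n},2\}$ and consider the geometric sequence $k_j := 10^j k_0$ for $j=0,1,2,\ldots$. Since $k_{j+1}=10k_j$, the sets $M(2^{k_j},2^{k_{j+1}})$ are pairwise disjoint modulo a measure-zero boundary. Because the normalized twisted K\"ahler-Ricci flow preserves the K\"ahler class $[\omega]$, the total volume $Vol(M)$ is constant along the flow, so for every $t$
\[ \sum_{j=0}^{\infty} Vol_{g(t)}\bigl(M(2^{k_j},2^{k_{j+1}})\bigr) \leq Vol(M). \]

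Setting $J := \lfloor Vol(M)/\epsilon \rfloor + 1$, where $\epsilon$ is the constant in the preceding proposition, a pigeonhole argument forces at least one index $j \in \{0,1,\ldots,J-1\}$ to satisfy $Vol_{g(t)}(M(2^{k_j},2^{k_{j+1}})) < \epsilon$ at every time $t$. Since each $k_j$ exceeds the threshold required by the proposition, we may apply the proposition at $k=k_j$ to conclude $u(\cdot,t) < 2^{10 k_j} = 2^{k_{j+1}} \leq 2^{k_J}$ pointwise. As $k_J$ is a fixed integer independent of $t$, this gives a uniform upper bound, which combined with the earlier lower bound yields the corollary.

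The only point requiring care is to verify that all constants involved -- $\kappa_1$, $\epsilon$, the threshold on $k$, and $Vol(M)$ -- are genuinely uniform in $t$, i.e.\ depend only on the initial data. Since the authors have consistently tracked the effective dependencies throughout the section (traced back to the entropy $A(\rho)$, the constants $B$, $\hat C$, and the fixed cohomology class), this bookkeeping is routine and I do not anticipate any substantive obstacle beyond it.
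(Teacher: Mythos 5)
Your argument is correct and takes essentially the same route as the paper's own proof: both fix $k$ above the threshold, apply pigeonhole to the pairwise disjoint sets $M(2^{10^{j}k}, 2^{10^{j+1}k})$ for $j < J := \lceil Vol(M)/\epsilon \rceil + 1$ to find for each time $t$ some index $j(t) < J$ with volume below $\epsilon$, and then invoke the preceding proposition to obtain $u < 2^{10^{J}k}$, with the lower bound supplied by the earlier lemma. The only caveat is that your phrasing suggests a single index $j$ works for all $t$, whereas pigeonhole only produces $j = j(t)$ depending on $t$; since $j(t) < J$ always, the resulting bound $2^{10^{J}k}$ is still uniform, so this does not affect the conclusion.
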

\begin{proof}
	Fix a $k > \max\{\log_2 \kappa_{1}^{-1/n}, 2\}$. 
	For any integer $N>1$ we have
	\[ \sum_{i=1}^N Vol(M(2^{10^{i-1}k},2^{10^i k})) < V, \]
	where $V$ is the volume of $M$, so if $N > V/\epsilon$, then
	there is an $i \leqslant N$ for which 
	\[ Vol(M(2^{10^{i-1}k},2^{10^ik})) < \epsilon. \]
	Using the previous Proposition, we must therefore have
	\[ u < 2^{10^Nk}. \]
\end{proof}

Now, since $u$ is uniformly bounded above, we obtain uniform upper bounds
for $|\nabla u|$ and $|\Delta u|$ from Lemma \ref{lem:ST1}.  
We summarize our results in the following
\begin{thm}\label{thm:pman}
Suppose that $g(t)$ evolves along the twisted K\"ahler-Ricci flow with
$g(0) = g_{0}$, then there exists a constant $C$ depending continuously on
the $C^{3}$ norm of $g_{0}$ (and a uniform lower bound on $g_0$), 
such that
\begin{equation*}
|u| + |\nabla u|_{g(t)} + |\Delta_{g(t)}u| \leq C.
\end{equation*}
\end{thm}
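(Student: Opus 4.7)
The plan is to assemble the uniform bound from the chain of estimates established earlier in the section, being careful about how the various constants depend on the initial data $g_0$. The theorem is essentially a packaging statement: all of the hard work has been done, and what remains is to trace the dependencies.

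First, I would record the lower bound on $u$ given by the lemma $u(t) > -n + c(0) - \max(R - \Tr_g \alpha)^{-}(0)$. Here $c(0) = \int_M u(0) e^{-u(0)} \, dm / V$ is controlled by the $C^0$-norm of the initial twisted Ricci potential $u(0)$, which in turn is controlled by the $C^2$-norm of $g_0$ (since $u(0)$ is determined by $\omega_0$ and $\alpha$ via $\ddb u = \omega + \alpha - \mathrm{Ric}(\omega)$, up to normalization). Similarly, $\max(R - \Tr_g \alpha)^{-}(0)$ is controlled by the $C^2$-norm of $g_0$. Thus the lower bound on $u$ depends only on $\|g_0\|_{C^2}$.

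Second, I would invoke the Corollary just proved to obtain a uniform upper bound $u < 2^{10^N k}$. The obstacle here is tracking how $N$, $k$, and the threshold $\epsilon$ depend on $g_0$. Unwinding the definitions: $\epsilon = C_2^{-2}$ where $C_2$ depends on the constant $\hat{C}$ from Lemma \ref{laplace upper bd} (which is $\max(R - \Tr_g \alpha)^{-}(0)$, controlled by $\|g_0\|_{C^2}$) and on the initial entropy $\mu(g_0, 1/2)$ (which is continuous in the $C^2$-topology on $\mathfrak{Met}$ by Proposition \ref{mu properties}(ii)). The constant $\kappa_1$ comes from the non-collapsing Proposition \ref{non collapsing}, and depends on $A(\rho) = \inf_{\tau \in [0, 1/2+\rho^2]} \mu(g_0, \tau)$, which is again continuous in the $C^2$-topology. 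The constant $k$ additionally depends on $B = n - c(0) + \max(R-\Tr_g \alpha)^{-}(0)$ through Lemma \ref{lem:ST1}. Finally, $N$ depends on $V/\epsilon$ where $V = \mathrm{Vol}(M)$ is a topological constant. A uniform lower bound on $g_0$ ensures $V$ stays uniformly positive and keeps the non-collapsing constant away from degeneration, which is why the statement includes this extra hypothesis.

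Third, once we have $|u| \leq C$ with $C$ depending continuously on $\|g_0\|_{C^3}$ (the $C^3$ is needed so that initial curvature quantities are controlled and the non-collapsing estimate through Proposition \ref{non collapsing} stays uniform for a family), I would plug the upper bound on $u$ back into Lemma \ref{lem:ST1} to obtain
\[ |\nabla u|^2 < 200 B (C + 200 B), \qquad |\Delta u| < 200 B(C + 200 B),\]
and observe that $B$ itself is controlled in terms of $\|g_0\|_{C^2}$.

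The only real subtlety is verifying that none of the constants in this chain blow up as $g_0$ varies over a $C^3$-bounded family with a uniform lower bound. Each individual constant is either directly expressed in terms of initial curvature and $C^0$-data of $u(0)$, or arises from $\mu(g_0, \tau)$, which by Proposition \ref{mu properties}(ii) is continuous in the $C^2$-topology, hence uniformly bounded on a $C^3$-bounded family. Combining these observations produces the stated estimate.
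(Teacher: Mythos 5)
Your proposal is correct and follows exactly the strategy the paper intends: lower bound from the explicit lemma, upper bound from the corollary above, then feed the resulting $C^0$ bound on $u$ into Lemma~\ref{lem:ST1} to control $|\nabla u|$ and $|\Delta u|$. The paper compresses this into a single sentence (``since $u$ is uniformly bounded above, we obtain uniform upper bounds for $|\nabla u|$ and $|\Delta u|$ from Lemma~\ref{lem:ST1}''); the part the paper leaves entirely implicit, and which you supply, is the accounting of constant dependencies — that $B$, $\kappa_1$, $\epsilon$, $N$, and the entropy input all trace back to curvature and entropy data of $g_0$ that vary continuously with the initial metric, with the lower bound on $g_0$ keeping $\Tr_{g_0}\alpha$, the Sobolev constant, and the non-collapsing estimate from degenerating. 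One minor note: your tracing actually shows everything depends on no more than $\|g_0\|_{C^2}$ (plus the lower bound), so the $C^3$ appearing in the statement is a slightly conservative choice rather than a sharp requirement for this particular theorem; it matches the $C^3$ topology used later in the smoothing and continuity arguments.
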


Notice that our estimates \emph{did not} require a diameter bound.  In this
sense our approach differs from that in \cite{ST}. The diameter bound can now be obtained
via a simple covering argument.

\begin{lem} Along the twisted K\"ahler-Ricci flow the diameter of $M$ is
	uniformly bounded.  More precisely,
\begin{equation*}
	\text{diam}(M,\omega(t)) \leq \frac{2^{2n}Vol(M)}{\kappa(K, 1/2)},
\end{equation*}
where $K$ denotes the uniform upper bound of $|R - \Tr_{g}\alpha|$ along
the flow, and $\kappa(K, 1/2)$ is defined in Proposition~\ref{non
collapsing unnorm}
\end{lem}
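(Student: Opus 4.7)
The plan is to combine the uniform upper bound on $|R - \mathrm{Tr}_g\alpha|$ that follows from Theorem~\ref{thm:pman} with the non-collapsing estimate of Proposition~\ref{non collapsing}, and then execute a classical volume-packing argument along a minimizing geodesic.

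First I would observe that Theorem~\ref{thm:pman} gives a uniform bound on $|\Delta u|$ along the normalized flow. Tracing the defining relation $\ddb u = \omega + \alpha - \mathrm{Ric}(\omega)$ with respect to $\omega$ yields $\Delta u = n + \mathrm{Tr}_g\alpha - R$, and hence a constant $K$, depending only on the initial data, with $|R - \mathrm{Tr}_g\alpha| \leq K$ throughout the flow. Enlarging $K$ if necessary so that $K \geq 1$, the hypothesis $r^{2}|R-\mathrm{Tr}_g\alpha| \leq K$ of Proposition~\ref{non collapsing} is automatic for any $r \leq 1$. Taking $\rho = 1/2$ and $r = 1/2$, which is admissible because $1/2 \leq e^{t/2}/2$ for every $t \geq 0$, the proposition delivers
\begin{equation*}
	Vol_{g(t)}(B(x,1/2)) \geq \frac{\kappa(K,1/2)}{2^{2n}}
\end{equation*}
for every $x \in M$ and every $t \geq 0$.

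Next I would carry out the covering step. Fix $t$, write $L = \mathrm{diam}(M,\omega(t))$, and choose $p,q \in M$ with $d(p,q)=L$. Let $\gamma:[0,L]\to M$ be a unit-speed minimizing geodesic from $p$ to $q$, and set $p_i = \gamma(i)$ for $i=0,1,\ldots,\lfloor L\rfloor$. Since $\gamma$ is minimizing and parametrized by arclength, $d(p_i,p_j)=|i-j|\geq 1$ for $i\neq j$, so the balls $B(p_i,1/2)$ are pairwise disjoint. Summing their volumes and comparing with the total volume gives
\begin{equation*}
	Vol(M) \geq \sum_{i=0}^{\lfloor L\rfloor} Vol(B(p_i,1/2))
	\geq (\lfloor L\rfloor + 1)\cdot\frac{\kappa(K,1/2)}{2^{2n}}
	\geq L\cdot\frac{\kappa(K,1/2)}{2^{2n}},
\end{equation*}
which is precisely the claimed estimate after rearrangement.

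There is no genuine difficulty in this argument; the only subtlety is matching the scales in Proposition~\ref{non collapsing} to the available curvature bound, which is why at the outset we converted the Perelman bound on $|\Delta u|$ into a pointwise bound on $|R - \mathrm{Tr}_g\alpha|$.
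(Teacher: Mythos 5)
Your proof is correct and follows essentially the same route as the paper: deduce the uniform bound on $|R - \Tr_g\alpha|$ from Perelman's estimate (Theorem~\ref{thm:pman}), invoke the non-collapsing estimate to get a uniform lower bound on the volume of balls of radius $1/2$, and then pack disjoint such balls along a minimizing geodesic. The extra care you take about the admissibility of $\rho=r=1/2$ in Proposition~\ref{non collapsing} and the step $\lfloor L\rfloor+1\geq L$ are details the paper leaves implicit, but the argument is identical in substance.
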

\begin{proof}
Fix points $p_{1}, p_{2} \in M$ with $d(p_{1},p_{2})=R$.  Let
$\gamma:[0,R]\rightarrow M$ be a length minimizing geodesic connecting
$p_{1}$ to $p_{2}$.  Let $B_0, B_1,\ldots, B_{\lfloor R\rfloor}$ be balls
of radius $1/2$, centered at the points
$\gamma(0),\gamma(1),\ldots,\gamma(\lfloor R\rfloor)$. These balls are
disjoint since $\gamma$ is length minimizing. By the bound on $|R-
Tr_g\alpha|$ and Proposition \ref{non collapsing unnorm}, we have
\[ Vol(M)\geqslant \sum_{0}^{\lfloor R\rfloor} Vol(B_i, \omega(t))
	\geqslant R\cdot 2^{-2n}\kappa(K, 1/2).\]
It follows that
\[ R \leqslant \frac{2^{2n} Vol(M)}{\kappa(K,1/2)}. \]
\end{proof}

The above estimates allow us to deduce a uniform Sobolev inequality along
the tKRF.  Since the proof follows closely the arguments of \cite{H, RY,
QZ} we list only the key ingredients.  The main observation is that the
monotonicity of the $\mu$ functional implies the following uniform,
restricted log Sobolev inequality.

\begin{prop}\label{uni log sob prop}
Let $g(t)$ be a solution of the twisted K\"ahler-Ricci flow defined on
$[0,\infty)$, with $g(0)=g_{0}$.  Define
\begin{equation*}
C_{1} = C(n) + 4n\log C_{S}(M, g(0)) + 4 \frac{Vol(M)^{-n}}{ C_{S}(M,
g_{0})^{2}} +\max_{M}(R(0)-Tr_{g_{0}}\alpha)^{-} ,
\end{equation*}
where $C_{S}(M, g_{0})$ denotes the Sobolev constant of $(M,g_{0})$.  Then,
for all $\epsilon \in (0,2]$, $t\in [0,\infty)$ and $u \in W^{1,2}(M)$
	satisfying $||u||_{L^{2}(g(t))}=1$, we have
\begin{equation*}
\int_{M}v^{2}\log v^{2} dm(t) \leq \epsilon^{2}\int_{M}\left(|\nabla
v|^{2} + \frac{(R(t)- \Tr_{g(t)}\alpha)}{4}u^{2}\right) dm(t) -2n\log
\epsilon + C_{1}.
\end{equation*}
\end{prop}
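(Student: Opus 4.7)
The plan is to follow the Ye--Hsu--Zhang strategy \cite{H, RY, QZ}, adapted to the twisted setting. The key point is that a lower bound on $\mu(g,\tau)$ is equivalent to a log Sobolev inequality of the desired form.

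First, I would observe that the substitution $v = (4\pi\tau)^{-n/2} e^{-f/2}$ (so that the constraint $(g,f,\tau)\in\mathcal{X}$ becomes $\|v\|_{L^2(g)}=1$) rewrites the twisted $\W$-functional as
\begin{equation*}
	\W(g,f,\tau) = 4\tau\!\int_M\! |\nabla v|^2\, dm + \tau\!\int_M\!(R-\Tr_g\beta)v^2 \, dm - \int_M v^2\log v^2\, dm - n\log(4\pi\tau) - 2n.
\end{equation*}
Setting $\epsilon^2 = 4\tau$, the inequality $\W(g,f,\tau)\geq \mu(g,\tau)$ becomes precisely
\begin{equation*}
	\int_M v^2\log v^2\, dm \leq \epsilon^2\!\int_M\!\left(|\nabla v|^2 + \tfrac{R-\Tr_g\beta}{4}v^2\right) dm - 2n\log\epsilon - n\log\pi -2n -\mu(g,\tfrac{\epsilon^2}{4}).
\end{equation*}
Thus it suffices to produce a uniform lower bound for $\mu(g(t),\epsilon^2/4)$, with the prescribed constants, for all $t\geq 0$ and $\epsilon\in(0,2]$.

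Second, I would invoke monotonicity to shift the problem to $t=0$. Converting to the unnormalized flow $\tilde g(s) = (1-2s)g(t)$ with $s=(1-e^{-t})/2$ and using the scaling property ({\it i}) of Proposition~\ref{mu properties}, one finds $\mu(g(t),\sigma) = \mu(\tilde g(s), e^{-t}\sigma)$, and then the semigroup monotonicity from ({\it iv}) gives
\begin{equation*}
	\mu(g(t),\sigma) \geq \mu(g(0), e^{-t}\sigma + s) \geq \inf_{\tau'\in(0,\,\sigma + 1/2]} \mu(g(0),\tau').
\end{equation*}
With $\sigma = \epsilon^2/4\leq 1$, the infimum ranges over $\tau'\in(0,3/2]$, so the entire problem reduces to bounding $\mu(g(0),\tau')$ from below on a bounded interval, uniformly in $\tau'$.

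Third, I would establish this initial-time bound from the Sobolev inequality on $(M,g_0)$. By the usual argument of Federer--Fleming / Rothaus applied to the Sobolev inequality with constant $C_S(M,g_0)$, one obtains a log Sobolev inequality of the form
\begin{equation*}
	\int_M v^2 \log v^2\, dm_0 \leq \epsilon^2 C_S^2\!\int_M\! |\nabla v|^2\, dm_0 - 2n\log \epsilon + C(n) + 4n\log C_S + 4\,\frac{Vol(M)^{-n}}{C_S^{2}}
\end{equation*}
for $\|v\|_{L^2(g_0)}=1$, $\epsilon\in(0,2]$. Combining this with the lower bound $R(0)-\Tr_{g_0}\beta \geq -\max(R(0)-\Tr_{g_0}\alpha)^{-}$ (equivalently absorbed into the constant) and reading the argument in reverse through the identity of the first paragraph produces the desired lower bound on $\mu(g(0),\tau')$, uniformly for $\tau'$ in a bounded interval, with the stated dependence on $n$, $C_S$, $Vol(M)$, and $\max(R(0)-\Tr_{g_0}\alpha)^{-}$.

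The main obstacle is the bookkeeping in the third step: passing from the Sobolev inequality to the log Sobolev inequality with explicit control of the constants, in particular the $4n\log C_S$ and $4\,Vol(M)^{-n} C_S^{-2}$ terms that appear in $C_1$. The twisted nature of the flow plays essentially no role here beyond the substitution $R\mapsto R - \Tr_g\beta$, and the semi-positivity of $\beta$ enters only implicitly through the monotonicity of $\mu$ established in Proposition~\ref{mu properties}.
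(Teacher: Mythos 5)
Your proposal is correct and takes essentially the same route the paper invokes: the paper omits the proof and defers to Ye, Hsu, and Zhang, noting only that "the monotonicity of the $\mu$ functional implies the following uniform, restricted log Sobolev inequality," which is precisely your three-step scheme (rewrite $\W$ via $v=(4\pi\tau)^{-n/2}e^{-f/2}$, push back to $t=0$ using Proposition~\ref{mu properties}~(\emph{i}) and (\emph{iv}), then derive the $t=0$ bound from the Sobolev constant of $g_0$). The algebra in your first step checks out, the scaling/monotonicity chain $\mu(g(t),\sigma)=\mu(\tilde g(s),e^{-t}\sigma)\geqslant\mu(g_0,e^{-t}\sigma+s)$ is right, and the residual constant bookkeeping you flag is exactly where the remaining (routine) work lies.
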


There is a close relationship between log Sobolev inequalities and
estimates for certain heat kernels.  Heat kernel estimates are in turn
closely related to Sobolev inequalities.  In the case of the Ricci flow,
the uniform log Sobolev inequality of the above proposition implies a
uniform Sobolev inequality along the flow.  Of course, a similar estimates
hold in the case of the twisted K\"ahler-Ricci flow.  Since the techniques
are by now well documented, we omit the details and record only the result.
For details in the K\"ahler case, we refer the reader to  \cite{H, RY,
QZ}.
\begin{prop}\label{uni sob}
Let $g(t)$ be a solution of the twisted K\"ahler-Ricci flow defined on
$[0,\infty)$ , with $g(0)=g_{0}$.  Define
\begin{equation*}
C_{2} = \sup_{M}(R(0)-\Tr_{g_{0}}\alpha)^{-} + C_{1}
\end{equation*}
where $C_{1}$ is the constant defined in Proposition~\ref{uni log sob
prop}.  Then there are positive constants $C(n), \beta(n)$, depending only
on $n$, such that for all $u \in W^{1,2}(M)$ we have
\begin{equation*}
\begin{aligned}
&\left(\int_{M}u^{2n/(n-1)} dm(t)\right)^{(n-1)/n} \leq
C(n)e^{\beta(n)C_{2}} \int_{M}\left(|\nabla
u|^{2}+\frac{(R-Tr_{g}\alpha)}{4}u^{2}\right) dm\\ &\qquad\qquad+
C(n)e^{\beta(n)C_{2}}\left(1+\max
(R(0)-\Tr_{g_{0}}\alpha)^{-}\right)\int_{M}u^{2}dm(t).
\end{aligned}
\end{equation*}
\end{prop}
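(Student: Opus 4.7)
The plan is to follow the classical route from a logarithmic Sobolev inequality to a Sobolev inequality via ultracontractivity of a Schr\"odinger-type semigroup, as implemented along the K\"ahler-Ricci flow in \cite{H, RY, QZ}. At each fixed time $t$, introduce the shifted Schr\"odinger operator
\[ L_t := -\Delta_{g(t)} + \tfrac{1}{4}\bigl(R(t) - \Tr_{g(t)}\alpha\bigr) + M_0, \]
where $M_0 := \max_M (R(0) - \Tr_{g_0}\alpha)^-$. By Lemma \ref{laplace upper bd}, the potential of $L_t$ is non-negative, so $L_t$ is a non-negative self-adjoint operator on $L^2(M, dm(t))$. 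Proposition \ref{uni log sob prop} then reads, for all $v$ with $\|v\|_{L^2(dm(t))}=1$ and all $\epsilon \in (0,2]$,
\[ \int_M v^2 \log v^2 \, dm(t) \leq \epsilon^2 \langle v, L_t v\rangle - 2n\log\epsilon + C_2, \]
where $C_2$ absorbs the additive constants and the harmless $-\epsilon^2 M_0$ term (using $\epsilon \leq 2$).

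Next, apply the Gross--Davies hypercontractivity argument to this family of logarithmic Sobolev inequalities. Setting $h(s) = \|e^{-sL_t}f\|_{L^{q(s)}(dm(t))}$ for a suitably chosen function $q(s)$ with $q(0)=2$ and $q(s_0)=\infty$ at some finite $s_0 \leq 1$, one differentiates in $s$, rewrites $\partial_s h$ in terms of $\int w^{q} \log w^{q} \, dm(t)$ with $w = e^{-sL_t}f$, and controls this by the log Sobolev inequality, choosing $\epsilon^2$ of the order $s$. After a bookkeeping-heavy but routine calculation, one extracts the ultracontractivity bound
\[ \|e^{-sL_t}\|_{L^1 \to L^\infty} \leq A(n)\,e^{\beta(n)C_2}\,s^{-n}, \qquad s \in (0,1], \]
with constants independent of $t$.

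Finally, by the Varopoulos correspondence, such a heat-kernel bound for a non-negative self-adjoint operator of effective dimension $2n$ is equivalent to a Sobolev inequality
\[ \|v\|_{L^{2n/(n-1)}}^2 \leq C(n)\,e^{\beta(n)C_2}\,\langle v, L_t v\rangle. \]
Expanding the definition of $L_t$ and separating off the $M_0 \|v\|_{L^2}^2$ term produced by the shift yields exactly the stated inequality, with the $(1 + \max(R(0) - \Tr_{g_0}\alpha)^-)\|u\|_{L^2}^2$ contribution on the right-hand side tracing back to that shift.

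The main obstacle is to make the Davies--Gross step quantitative and uniform in $t$: the constants in the log Sobolev inequality must pass through the differential inequality for $h(s)$ without accruing any $t$-dependence. This is where the semi-positivity of $\beta$ is essential, as it underlies both the monotonicity of Perelman's $\mu$-functional that feeds into Proposition \ref{uni log sob prop} and the one-sided bound $R - \Tr_g\alpha \geq -M_0$ of Lemma \ref{laplace upper bd} that makes $L_t \geq 0$. Once this is in place, the remaining conversion from ultracontractivity to the Sobolev inequality is a standard real-variable argument that can be transcribed essentially verbatim from \cite{H, RY, QZ}, so we content ourselves with the outline above.
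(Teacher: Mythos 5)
Your proposal follows essentially the same route the paper has in mind (and, like the paper, leaves the heavy computation to the references): the restricted log-Sobolev inequality of Proposition~\ref{uni log sob prop}, together with the lower bound $R-\Tr_g\alpha\geq -\max(R(0)-\Tr_{g_0}\alpha)^-$ from Lemma~\ref{laplace upper bd}, is converted via Davies--Gross ultracontractivity of the shifted Schr\"odinger semigroup and the heat-kernel/Sobolev correspondence into the stated inequality, exactly as in \cite{H, RY, QZ}. One cosmetic remark: since your ultracontractive bound is only for $s\in(0,1]$, the Varopoulos-type step produces a Sobolev inequality with an additional $L^2$ term, and it is this restriction (rather than the spectral shift alone) that accounts for the additive $1$ in the factor $\left(1+\max(R(0)-\Tr_{g_0}\alpha)^-\right)$; this does not affect the correctness of the argument.
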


\section{The twisted Mabuchi energy}\label{sec: Mabuchi}

In this section we study the tKRF under the assumption that a twisted
K\"ahler-Einstein metric exists. More generally it is enough to assume that
the twisted Mabuchi energy is bounded below on the K\"ahler class $2\pi
c_{1}(M)-\alpha$.  First, we recall the definition of the twisted Mabuchi
energy

\begin{defn}[\cite{Stoppa} Definition 1.8]
Fix $\omega_{0} \in 2\pi c_{1}(M)-\alpha$.  For any K\"ahler form
$\omega_{\phi} = \omega_{0} + \ddb \phi$, we define the twisted Mabuchi
energy by its variation at $\phi$
\begin{equation*}
\delta\mathcal{M}_{\alpha}(\delta \phi) = -\int_{M} \delta \phi\left(R(\omega_{\phi}) - \Tr_{g_{\phi}}\alpha - n\right)\omega_{\phi}^{n} dt
\end{equation*}
where $\delta \phi \in C^{\infty}(M, \R)$. 
\end{defn}

The Mabuchi energy of a metric $\omega$ is obtained by fixing a base point
$\omega_{0}$ and integrating the variation along a path of metrics
connecting $\omega_{0}$ to $\omega$.  It is a basic fact that the result is
independent of the path chosen. The connection with twisted
K\"ahler-Einstein metrics is the following. 

\begin{thm}\label{MabuchiLB}
	Suppose that $\mathrm{Ric}(\omega) = \omega + \alpha$. Then the
	twisted Mabuchi energy is bounded below on the K\"ahler class $2\pi
	c_1(M) - \alpha$. 
\end{thm}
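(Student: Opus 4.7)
The plan is to use the convexity of the twisted Mabuchi energy along weak geodesics in the space of K\"ahler potentials, combined with the fact that a twisted K\"ahler-Einstein metric is a critical point of $\mathcal{M}_\alpha$. This is the twisted analog of Berndtsson's modern proof of Bando-Mabuchi, and the result is essentially contained in \cite{Ber}; the content of the argument is to explain why the $\alpha$-twist causes no difficulty.

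First, I would observe that if $\mathrm{Ric}(\omega) = \omega + \alpha$, then tracing with $g$ yields $R(\omega) - \Tr_g\alpha = n$, so the defining variational formula gives $\delta\mathcal{M}_\alpha(\delta\phi) = 0$ for every smooth $\delta\phi$. Thus the twisted K\"ahler-Einstein metric $\omega$ is a critical point of $\mathcal{M}_\alpha$ in the class $2\pi c_1(M) - \alpha$.

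Next, fix any competitor $\omega_{\phi_1} = \omega_0 + \ddb\phi_1$ in the class, and let $\phi_0$ be a K\"ahler potential for $\omega$, so $\omega = \omega_0 + \ddb\phi_0$. I would connect $\phi_0$ and $\phi_1$ by the weak $C^{1,1}$ geodesic $\phi_t$, $t\in[0,1]$, obtained from Chen's solution of the Dirichlet problem for the homogeneous complex Monge--Amp\`ere equation on $M\times A$, where $A\subset\C^*$ is an annulus. By Berndtsson's convexity theorem \cite{Ber}, adapted to the twisted setting, the function $t\mapsto \mathcal{M}_\alpha(\phi_t)$ is convex on $[0,1]$; here the hypothesis that $\alpha$ is closed and semi-positive is exactly what is needed to produce a non-negative curvature contribution in the positive direct image computation underlying Berndtsson's proof, so convexity is preserved when the twist is switched on.

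Finally, since $\phi_0$ is a critical point of $\mathcal{M}_\alpha$, the right derivative of the convex function $t\mapsto \mathcal{M}_\alpha(\phi_t)$ at $t=0$ vanishes, and so convexity yields
\[
\mathcal{M}_\alpha(\phi_1) \geq \mathcal{M}_\alpha(\phi_0) + 0 \cdot 1 = \mathcal{M}_\alpha(\phi_0),
\]
giving a uniform lower bound on the whole class. The main obstacle is the low regularity of the weak geodesic: both the convexity of $\mathcal{M}_\alpha$ along the merely $C^{1,1}$ curve $\phi_t$ and the identification of its right derivative at $t=0$ with the first variation of $\mathcal{M}_\alpha$ at the smooth critical point $\phi_0$ require the regularity-robust formulation of the Mabuchi functional and the plurisubharmonic variation of Bergman-type kernels developed in \cite{Ber}, with the non-negativity of $\alpha$ supplying the crucial extra positivity.
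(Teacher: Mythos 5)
Your proposal takes a genuinely different route from the paper, which simply cites Chen--Tian's disc-foliation machinery \cite{CT} (in the twisted form worked out in \cite{Stoppa} and \cite{Sz}) rather than running a geodesic-convexity argument. Your overall plan is morally sound, but there is a misattribution that hides a real technical issue: the Berndtsson paper cited here as \cite{Ber} proves subharmonicity/convexity of the \emph{Ding} functional along weak geodesics (via positivity of direct image bundles), not convexity of the Mabuchi energy. Convexity of the K-energy $\mathcal{M}_\alpha$ along merely $C^{1,1}$ geodesics is a substantially more delicate statement, because $\mathcal{M}_\alpha$ contains the entropy term $\int \log(\omega_\phi^n/\omega_0^n)\,\omega_\phi^n$, which is not obviously controlled (or even convex) at $C^{1,1}$ regularity; establishing this required the later pluripotential-theoretic reformulation of the K-energy (Chen's formula) and the Berman--Berndtsson convexity theorem, which postdates the present paper. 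So as written, the key convexity input you invoke is not supplied by \cite{Ber}. A cleaner variant of your argument that does go through with only \cite{Ber} in hand, and stays within the Fano setting, is to work with the twisted Ding functional $F_\alpha$: Berndtsson's positivity of direct images gives convexity of $F_\alpha$ along weak geodesics, the twisted KE metric is a critical point, hence $F_\alpha$ is bounded below; and then $\mathcal{M}_\alpha \geq F_\alpha + \text{const}$ (the difference is a relative entropy, hence nonnegative) upgrades this to a lower bound for the Mabuchi energy. Either route is acceptable, but you should be explicit about which functional you are proving convex and with which input, since the naive statement for $\mathcal{M}_\alpha$ is not available from the reference you cite.
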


This is a generalization of the result of Bando-Mabuchi~\cite{BM}. In the
twisted case it follows from the results of Chen-Tian~\cite{CT} (see e.g.
Stoppa~\cite{Stoppa} or Sz\'ekelyhidi~\cite{Sz}). 
Along the twisted K\"ahler-Ricci flow
the Mabuchi energy takes on a particularly convenient form.

\begin{lem}
Suppose that $\omega(t)$ evolves along the twisted K\"ahler-Ricci flow with
$\omega(0)=\omega_{0}$.  Then  the Mabuchi energy, with base point
$\omega_{0}$ is given by
\begin{equation*}
\mathcal{M}_{\alpha}(\omega_{0}, \omega(t)) = -\int_{0}^{t} \int_{M} |\nabla u|^{2}(s) \omega(s)^{n} ds.
\end{equation*}
In particular, if the Mabuchi energy is bounded below, then
\begin{equation*}
\lim_{t\rightarrow \infty}\int_{M} |\nabla u|^{2}(t) \omega(t)^{n}= 0.
\end{equation*}
\end{lem}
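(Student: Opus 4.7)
The plan is to express $\mathcal{M}_\alpha(\omega_0,\omega(t))$ as a line integral along the canonical path of K\"ahler potentials supplied by the flow itself. With the normalization from the paragraph preceding \eqref{eq:ddtphi} we have $\phi(0)=0$ and $\dot\phi(t)=u(t)$, so the Mabuchi variation formula gives
\begin{equation*}
	\frac{d}{dt}\mathcal{M}_\alpha(\omega_0,\omega(t)) = -\int_M u\bigl(R(\omega) - \Tr_g\alpha - n\bigr)\omega^n.
\end{equation*}
Tracing the twisted Ricci potential equation $\ddb u = \omega + \alpha - \mathrm{Ric}(\omega)$ with respect to $\omega$ yields $\Delta u = n + \Tr_g\alpha - R(\omega)$, so the bracketed quantity above is simply $-\Delta u$. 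Substituting and integrating by parts gives
\begin{equation*}
	\frac{d}{dt}\mathcal{M}_\alpha(\omega_0,\omega(t)) = \int_M u\,\Delta u\,\omega^n = -\int_M |\nabla u|^2\,\omega^n,
\end{equation*}
and integrating in $t$ from $0$ to $t$ yields the first identity.

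For the second assertion, the formula above shows that $\mathcal{M}_\alpha(\omega_0,\omega(t))$ is monotonically non-increasing in $t$, so the hypothesized lower bound forces
\begin{equation*}
	\int_0^\infty F(s)\,ds < \infty, \qquad F(s) := \int_M |\nabla u|^2(s)\,\omega(s)^n.
\end{equation*}
To conclude $F(t)\to 0$ it suffices to show that $F$ is uniformly continuous on $[0,\infty)$, since a non-negative uniformly continuous function on $[0,\infty)$ with finite integral necessarily tends to zero at infinity. Differentiating $F$ using the evolution equations of Lemma \ref{ev equations} together with $\partial_t\omega^n = (\Delta u)\omega^n$, every resulting term is uniformly bounded by Theorem \ref{thm:pman} except for the integrals of $|\nabla\nabla u|^2$ and $|\nabla\bar\nabla u|^2$ against $\omega^n$. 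Integrating the evolution equation for $\Delta u$ from Lemma \ref{ev equations} against $\omega^n$, and using that $\int_M \Delta u\,\omega^n = 0$ at every time by Stokes' theorem, gives $\int_M |\nabla\bar\nabla u|^2\omega^n = \int_M (\Delta u)^2\omega^n$, which is uniformly bounded by Theorem \ref{thm:pman}; the analogous bound on $\int_M |\nabla\nabla u|^2\omega^n$ follows by integrating the K\"ahler Bochner formula against $\omega^n$.

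The first part of the lemma is essentially a substitution and integration by parts; the only mildly technical point is the uniform continuity of $F$ in the second part, whose key ingredient is the integrated second-derivative bound furnished by the Bochner formula together with the $L^\infty$ estimates of Theorem \ref{thm:pman}.
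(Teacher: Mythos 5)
Your first identity is the same direct computation the paper omits (trace $\ddb u=\omega+\alpha-\mathrm{Ric}(\omega)$ to get $R-\Tr_g\alpha-n=-\Delta u$, then integrate by parts), and that part is fine. For the second assertion, however, you take a different and strictly harder route than the paper, and as written it has a gap. The paper (following Phong--Sturm \cite{PS}) never needs two-sided control of $Y(t)=\int_M|\nabla u|^2\omega^n$: the finite integral gives a subsequence $t_i\in[i,i+1]$ with $Y(t_i)\to 0$, and when one differentiates $Y$ the dangerous terms $-|\nabla\nabla u|^2-|\nabla\bar\nabla u|^2-\alpha(\nabla u,\bar\nabla u)$ all enter with a favorable sign (this is where $\alpha\geq 0$ is used) and are simply discarded, leaving the one-sided inequality $\frac{d}{dt}Y\leq(1+\sup|\Delta u|)Y\leq CY$ by Theorem~\ref{thm:pman}; this alone upgrades the subsequential limit to $Y(t)\to 0$. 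Your uniform-continuity argument instead requires a uniform bound on $|F'(t)|$, hence uniform \emph{upper} bounds on $\int_M|\nabla\nabla u|^2\omega^n$, $\int_M|\nabla\bar\nabla u|^2\omega^n$, and also on $\int_M\alpha(\nabla u,\bar\nabla u)\,\omega^n$ --- a term you implicitly file under ``bounded by Theorem~\ref{thm:pman}'', which it is not: the theorem controls $u$, $|\nabla u|$, $|\Delta u|$ (equivalently $R-\Tr_g\alpha$), but neither $\Tr_{g(t)}\alpha$ nor $\mathrm{Ric}(g(t))$ separately.

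The same issue sits inside your appeal to the ``K\"ahler Bochner formula'': your identity $\int_M|\nabla\bar\nabla u|^2\omega^n=\int_M(\Delta u)^2\omega^n$ is correct (it is a pure integration-by-parts identity on a K\"ahler manifold, and your derivation from Lemma~\ref{ev equations} is also valid), but the corresponding identity for the $(2,0)$-Hessian is
\begin{equation*}
\int_M|\nabla\nabla u|^2\,\omega^n=\int_M(\Delta u)^2\,\omega^n-\int_M R^{i\bar j}u_iu_{\bar j}\,\omega^n,
\end{equation*}
and the Ricci term is not bounded by Theorem~\ref{thm:pman}. The repair is to substitute $R_{i\bar j}=g_{i\bar j}+\alpha_{i\bar j}-u_{i\bar j}$, which rewrites the identity as
\begin{equation*}
\int_M|\nabla\nabla u|^2\,\omega^n+\int_M\alpha^{i\bar j}u_iu_{\bar j}\,\omega^n=\int_M(\Delta u)^2\,\omega^n-\int_M|\nabla u|^2\,\omega^n+\int_M u_{i\bar j}u^iu^{\bar j}\,\omega^n;
\end{equation*}
the last term is bounded by Cauchy--Schwarz using $\sup|\nabla u|$ and the $L^2$ bound on $\nabla\bar\nabla u$ you already have, and since both terms on the left are non-negative ($\alpha\geq 0$), each is uniformly bounded --- which simultaneously supplies the missing bound on the $\alpha$-term in $F'$. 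With these additions your argument closes, but as stated the boundedness claims are unjustified, and the paper's one-sided inequality makes this entire second-derivative bookkeeping unnecessary.
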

\begin{proof}
The first assertion follows directly from computation, so we omit the
details.  For the second assertion, observe that since the Mabuchi energy
is bounded below, there exists times $t_{i} \in [i, i+1]$ such that 
\begin{equation*}
\lim_{i \rightarrow \infty}\int_{M} |\nabla u|^{2}(t_{i})
\omega(t_{i})^{n}= 0.
\end{equation*}
This extends to the full sequence by observing that using
Theorem~\ref{thm:pman}, we have the differential inequality
\begin{equation*}
\ddt Y(t) \leq C Y(t)
\end{equation*}
where $Y(t) = \int_{M} |\nabla u|^{2}(t) \omega(t)^{n}$ and $C$ is
independent of time.  For details in the untwisted case, see Phong-Sturm
\cite{PS}. 
\end{proof}

This estimate allows us to improve the estimates in Theorem~\ref{thm:pman}. 
\begin{prop}\label{prop: improved pman}
Suppose that the twisted Mabuchi energy is bounded below on $2\pi c_{1}(M) -\alpha$.  Then
\begin{equation*}
\lim_{t\rightarrow \infty} |u(t)| + |\nabla u(t)| + |\Delta u(t)| =0.
\end{equation*}
\end{prop}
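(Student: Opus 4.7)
The plan is to promote the integral decay $Y(t) := \int_M |\nabla u|^2\,\omega(t)^n \to 0$ from the preceding lemma---which also yields $Y \in L^1([0,\infty))$ via the Mabuchi energy formula---to uniform pointwise decay of each of $|u|$, $|\nabla u|$ and $|\Delta u|$, by running parabolic Moser iteration on appropriate non-negative quadratic quantities, exploiting the uniform Sobolev inequality of Proposition~\ref{uni sob} and the uniform bounds of Theorem~\ref{thm:pman}.

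For the gradient, set $f := |\nabla u|^2$. The evolution equation of Lemma~\ref{ev equations}, combined with the non-negativity of $\alpha$, gives
\[ \left(\frac{\del}{\del t} - \Delta\right) f \leqslant f, \]
since the terms $-\alpha(\nabla u,\nabla u)$, $-|\nabla\nabla u|^2$ and $-|\nabla\overline{\nabla} u|^2$ all have the favorable sign. As $f$ is uniformly bounded in $L^\infty$ and the flow satisfies the uniform Sobolev inequality, the standard parabolic Moser iteration produces a mean-value inequality of the form
\[ \sup_M f(t+1) \leqslant C \int_t^{t+2}\int_M f(s)\,\omega(s)^n\,ds, \]
with $C$ independent of $t$, and the right-hand side tends to zero. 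To pass from this to $\sup_M|u|\to 0$, I would use that the normalization $\int_M e^{-u}\,dm = V$ forces $u$ to take both signs (otherwise $1-e^{-u}$ would have constant sign, contradicting $\int_M(1-e^{-u})\,dm = 0$), so by continuity $u$ vanishes at some $p_t\in M$; combined with the uniform diameter bound from Section~\ref{sec: Perelman},
\[ |u(x,t)| \leqslant \mathrm{diam}(M,\omega(t)) \cdot \sup_M|\nabla u(t)| \longrightarrow 0. \]

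The main technical step is the decay of $\Delta u$. Here I would first establish integral decay by differentiating $Y(t)$ using Lemma~\ref{ev equations} and $\partial_t\omega^n = (\Delta u)\omega^n$: rearranging expresses $\int_t^{t+1}\!\int(|\nabla\nabla u|^2 + |\nabla\overline{\nabla} u|^2)\omega^n\,ds$ as a sum of $Y(t)-Y(t+1)$, $\int_t^{t+1}Y$, $-\int\alpha(\nabla u,\nabla u)$ bounded by $CY$, and $\int|\nabla u|^2\Delta u$ bounded by $CY$ via the uniform bound on $|\Delta u|$, all of which tend to zero. The pointwise Cauchy--Schwarz inequality $(\Delta u)^2\leqslant n|\nabla\overline{\nabla} u|^2$ then yields $\int_t^{t+2}\!\int(\Delta u)^2\,\omega^n\,ds\to 0$. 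I would then run a second Moser iteration on the auxiliary non-negative quantity
\[ h := (\Delta u)^2 + K|\nabla u|^2, \]
with $K$ chosen large enough that the uniform bound on $|\Delta u|$ absorbs the sign-indefinite cross term $-2\Delta u\cdot|\nabla\overline{\nabla} u|^2$ from the evolution of $(\Delta u)^2$ into the non-positive contribution $-K|\nabla\overline{\nabla} u|^2$ coming from $K|\nabla u|^2$. This gives a clean subsolution inequality $(\partial_t - \Delta) h \leqslant K h$, and the preceding $L^2$ decay of $h$ together with Moser iteration concludes $\sup_M|\Delta u|(t) \to 0$. The main obstacle I anticipate is verifying this last subsolution calculation with the correct constants; an alternative, if it proves cumbersome, is to bootstrap the Perelman-type estimates of Section~\ref{sec: Perelman} to higher-order $C^\infty$ bounds on the metric, and argue by smooth subsequential convergence, since any limit must satisfy $u\equiv 0$.
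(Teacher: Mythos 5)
Your proposal is correct and is essentially the approach the paper intends: the paper itself gives no argument, simply asserting the result ``is identical to the KRF'' and citing Phong--Song--Sturm--Weinkove, whose argument is precisely the parabolic Moser iteration scheme you describe (pointwise decay of $|\nabla u|$ from $Y(t)\to 0$, then $|u|$ via the zero of $u$ and the diameter bound, then $|\Delta u|$ via integral decay of $|\nabla\overline\nabla u|^2$ and a second Moser iteration), adapted here using the non-negativity of $\alpha$ to keep the favorable signs and the twisted uniform Sobolev inequality of Proposition~\ref{uni sob}.
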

\begin{proof}
This is identical to the KRF.  See, for instance Phong-Song-Sturm-Weinkove
\cite{PSSW}.
\end{proof}
We can now employ these estimate to study the behaviour of the twisted $\mu$ functional along the tKRF.

\begin{prop}\label{prop: f est}
Let $f_{t}$ be the function achieving $\mu(g(t),\frac{1}{2})$, where $g(t)$ is a solution of the twisted Kahler-Ricci flow with initial value $g(0) =g_{0}$.  Then the following estimates hold along the twisted K\"ahler-Ricci flow.
\begin{itemize}
\item[({\it i})]  
There exists a constant $C_{1} = C_{1}(g_{0})$ such that $\sup_{M}|f_{t}|  \leq  C_{1}$.
\item[({\it ii})]
There exists a subsequence of times $t_{i} \in [i, i+1]$ such that
\begin{equation*}
\lim_{i\rightarrow \infty}\left(\int_{M} |\nabla f_{t_{i}}|^{2}dm_{t_{i}}\right)^{2} + \lim_{i\rightarrow \infty}\int_{M}|\Delta f_{t_{i}}|^{2} dm_{t_{i}} =0.
\end{equation*}
\item[({\it iii})] Along the sequence $t_{i}$ we have
\begin{equation*}
\lim_{i\rightarrow \infty} \int_{M}f_{t_{i}}e^{-f_{t_{i}}}dm_{t_{i}} = (2\pi)^{n}\log\left((2\pi)^{-n}Vol(M)\right)
\end{equation*}
\end{itemize}

\end{prop}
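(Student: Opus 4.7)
The plan is to prove the three items in sequence, with each relying on the previous.

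For part ({\it i}), I would apply Moser iteration to the nonlinear elliptic equation \eqref{min equation} satisfied by $f_t$. Setting $w = e^{-f/2}$, so that $\int_M w^2\,dm = (2\pi)^n$ by the normalization, and specializing to $\tau = 1/2$, a standard computation rewrites \eqref{min equation} as a semi-linear elliptic equation of the form
\begin{equation*}
-4\Delta w + (R - \Tr_g\beta)w - 2w\log w = (2n + \mu)w.
\end{equation*}
Lemma~\ref{lem: mu upper bound} together with monotonicity gives a uniform bound on $\mu(g(t), 1/2)$; Theorem~\ref{thm:pman} controls $R - \Tr_g\beta$ uniformly along the flow; and Proposition~\ref{uni sob} supplies a uniform Sobolev inequality. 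Moser iteration (absorbing the subcritical $w\log w$ nonlinearity in the standard way, cf.~\cite{QZ}) then yields $\|w\|_{L^\infty} \leq C$, hence a uniform lower bound $f_t \geq -C$. The upper bound on $f_t$, i.e.\ a lower bound $w \geq c > 0$, is more delicate and requires either Moser iteration with negative exponents (controlled via the normalization $\|w\|_{L^2}^2 = (2\pi)^n$) or a De Giorgi--Nash--Moser argument applied to $\log w$; I expect this to be the main technical step.

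For part ({\it ii}), I would exploit the monotonicity of $\mu(g(t), 1/2)$ from Proposition~\ref{mu properties} ({\it iv}), which, translated to the normalized flow, implies
\begin{equation*}
\frac{d}{dt}\mu(g(t), 1/2) \geq c_n\int_M |T_t|^2\, e^{-f_t}\,dm,
\end{equation*}
for some positive constant $c_n$, where $T_t$ is the twisted soliton defect tensor. Since $\mu$ is monotone and bounded above by Lemma~\ref{lem: mu upper bound}, the time integral $\int_0^\infty \int_M |T_t|^2 e^{-f_t}\,dm\,dt$ is finite, so in each interval $[i, i+1]$ there is some $t_i$ with $\int_M |T_{t_i}|^2 e^{-f_{t_i}}\,dm_{t_i} \to 0$. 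Combined with the $L^\infty$ bound on $f_{t_i}$ from ({\it i}), this gives $T_{t_i} \to 0$ in $L^2(dm_{t_i})$. Decomposing in complex coordinates: the $(1,1)$-part of $T$ equals (up to a positive constant) $Ric(\omega) - \alpha - \omega + \ddb f = \ddb(f - u)$, while the $(2,0)$-part controls the unmixed second derivatives $\del_j\del_k f$. Hence $\ddb(f_{t_i} - u_{t_i}) \to 0$ in $L^2$, which gives $\Delta(f_{t_i} - u_{t_i}) \to 0$ in $L^2$ by taking trace, and $\nabla(f_{t_i} - u_{t_i}) \to 0$ in $L^2$ via integration by parts and the uniform $L^\infty$ bound on $f - u$. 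Combining with Proposition~\ref{prop: improved pman}, which supplies $\|\nabla u\|_{L^\infty}, \|\Delta u\|_{L^\infty} \to 0$, yields the claimed $L^2$ convergences.

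For part ({\it iii}), the uniform $L^\infty$ bound on $f_{t_i}$ from ({\it i}) together with $\int_M |\nabla f_{t_i}|^2\,dm \to 0$ from ({\it ii}) gives, via the Poincar\'e inequality, that $f_{t_i}$ is $L^2$-close to its spatial mean $\bar f_{t_i}$. Since $f_{t_i}$ is uniformly bounded, we obtain $e^{-f_{t_i}} - e^{-\bar f_{t_i}} \to 0$ in $L^1$; combined with the normalization $\int_M e^{-f_{t_i}}\,dm = (2\pi)^n$, this forces $\bar f_{t_i} \to \log((2\pi)^{-n}Vol(M))$. Decomposing
\begin{equation*}
\int_M f_{t_i}e^{-f_{t_i}}\,dm = \bar f_{t_i}\int_M e^{-f_{t_i}}\,dm + \int_M (f_{t_i} - \bar f_{t_i})e^{-f_{t_i}}\,dm,
\end{equation*}
the first term converges to $(2\pi)^n\log((2\pi)^{-n}Vol(M))$, while the second is $o(1)$ by Cauchy--Schwarz and the $L^2$-convergence of $f_{t_i}$ to its mean, completing the proof.
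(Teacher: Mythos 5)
The main gap is in part ({\it i}): you prove only the lower bound $f_t \geq -C$ (which matches the paper's argument via $h=e^{-f/2}$, $\Delta h \geq -h^{1+\delta}-C_\delta$ and Moser iteration with the $L^2$ normalization), but you leave the upper bound on $f_t$ as a conjecture (``Moser iteration with negative exponents or De Giorgi--Nash--Moser applied to $\log w$''), and neither suggestion closes in any straightforward way. Written as an equation for $w=e^{-f/2}$, the zeroth-order coefficient contains $\log w$, i.e.\ essentially $f$ itself, which is unbounded above exactly in the region where $w$ is small; a Harnack or weak-Harnack argument would require this potential to lie in $L^{q}$ with $q$ larger than half the real dimension, which is not available a priori, and a local lower bound on $w$ in terms of its $L^p$ mass minus the inhomogeneity cannot be globalized without knowing the mass of $w^2$ does not concentrate. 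The paper's mechanism is genuinely different and global: it multiplies the minimizer equation \eqref{min equation} by $e^{-u}$, uses Perelman's uniform bound on the twisted Ricci potential $u$ (Theorem~\ref{thm:pman}) and the normalizations $\int e^{-f}dm=(2\pi)^n$, $\int e^{-u}dm=V$ to show the sublevel set $\{f<A\}$ has a definite amount of $e^{-u}$-measure, and then runs an absorption argument through the weighted Poincar\'e inequality of Lemma~\ref{poincare} (Futaki's inequality, which is where the non-negativity of $\alpha$ enters) to obtain a uniform bound on $\int_M f^2 e^{-u}\,dm$; only then does Moser iteration applied to $\Delta f \geq -f - C$ give $\sup f \leq C_1$. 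Since parts ({\it ii}) and ({\it iii}) both use the two-sided bound on $f_t$, this missing step is essential.

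Parts ({\it ii}) and ({\it iii}) are otherwise sound. In ({\it ii}) your route is the paper's, except that you differentiate $\mu(g(t),\tfrac12)$ in $t$ as if it were smooth; since $\mu$ is an infimum, Proposition~\ref{mu properties}({\it iv}) only gives monotonicity in the sense of liminf backward difference quotients, and the paper derives the integrated inequality \eqref{decay inequality} via a partition of $[0,T]$, the mean value theorem, and backward heat flow solutions $f_i(t)$ attached to the minimizers --- a fixable but genuine technical point you should include. Your part ({\it iii}) is a valid and somewhat more direct variant of the paper's argument: instead of the paper's auxiliary function $\tilde f$ and Jensen's inequality, you compare $f_{t_i}$ with its mean via a Poincar\'e inequality; just note that the uniform (in $t$) Poincar\'e constant must be justified, e.g.\ from Lemma~\ref{poincare} together with the uniform bound on $u$, or from the uniform Sobolev inequality plus non-collapsing and the diameter bound.
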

\begin{proof}
The proof of (i)
follows \cite{TZ} closely, so we will only outline the argument. 
We begin by proving the first bound.  For ease of notation, we suppress the
dependence on $t$.  From \eqref{min equation} together with the bounds on
$R-\Tr_g\alpha$ and $\mu(g,1/2)$, the minimizer $f$ satisfies
\[ \Delta f - \frac{1}{2}|\nabla f|^2  < C - f, \]
from which we get
\[ \Delta e^{-f/2} = -\frac{1}{2}e^{-f/2}\left(
	\Delta f - \frac{1}{2}|\nabla f|^2\right) >
	-\frac{1}{2}(C-f)e^{-f/2}. \]
Letting $h = e^{-f/2}$ we then have a constant $C_\delta$ for any $\delta >
0$ such that
\[ \Delta h \geqslant -h^{1+\delta} - C_\delta.\]
Moser iteration, together with the normalization $\int h^2\, dm =
(2\pi)^n$ implies an upper bound for $h$, i.e. a lower bound
$f \geq -C_1$ for $f$. 

We turn our attention now to the upper bound. Define 
\[ E_{A} = \{x\in M : f(x) < A \}.\]
Using the bound on $u$ from Theorem \ref{thm:pman} and the normalization of
$f$, we have
\[ \int_M e^{-f} e^{-u}\,dm > C^{-1}, \]
for some $C$. Using the normalization $\int_M e^{-u}\,dm = V$ 
together with the lower bound $f\geqslant -C_1$, this implies
that there exists a sufficiently large $A$, and a $\delta > 0$ such that
\[ \int_{E_A} e^{-u}\,dm > \delta. \]

Using the bound on $u$ again, we have
\begin{equation}\label{ineq 1}
\begin{aligned}
	\int_{M}fe^{-u} dm &= \int_{E_A} fe^{-u}\,dm + \int_{M\setminus E_A}
	fe^{-u}\,dm \\
	&\leqslant AC + (V - \delta)^\frac{1}{2} \left(\int_M
	f^2e^{-u}\,dm\right)^\frac{1}{2}. 
\end{aligned}
\end{equation}
It follows that for some $C_2$ we have
\begin{equation}\label{eq:intfeu}
	\left(\int_M fe^{-u}\,dm\right)^2 \leqslant C_2 + \left(V -
	\delta/2\right)\int_M f^2e^{-u}\,dm.
\end{equation}
	Multiplying equation~(\ref{min equation}) by $e^{-u}$, integrating,
	and using the uniform bounds on $u$, 
$R-\Tr_{g}\alpha$ and $\mu(g)$, we obtain
\begin{equation*}
\begin{aligned}
	\int_{M}|\nabla f|^{2}e^{-u}\,dm &\leq \int_{M}f e^{-u}dm + C_3
	\\
	&\leq \frac{\delta}{4V}\int_M f^2e^{-u}\,dm + C_4,
\end{aligned}
\end{equation*}
for some $C_3, C_4$. 
Substituting this into the weighted Poincar\'e inequality of
Lemma~\ref{poincare}, and using \eqref{eq:intfeu} yields
\begin{equation*}
\begin{aligned}
\int_{M}f^{2}e^{-u}dm &\leq \int_{M}|\nabla f|^{2} e^{-u}dm + \frac{1}{V} \left(\int_{M}fe^{-u}dm\right)^{2}\\
&\leq \frac{\delta}{4V}\int_{M}f^2e^{-u}dm + \left(1 -
\frac{\delta}{2V}\right)\int_{M}f^2 e^{-u}dm + C_5.
\end{aligned}
\end{equation*}
Rearranging this, we get an upper bound
\begin{equation*}
\int_{M}f^{2}e^{-u} dm \leq C_6
\end{equation*}
where $C_6$ can be chosen to depend only on $g(0)$.  By equation~(\ref{min
equation}) we have $\Delta f \geq -f -C$, so 
the upper bound for $f$ follows from Moser
iteration and the $L^{2}$ bound. 

We now prove the second and third items.  We begin by observing that 
\begin{equation}\label{decay inequality}
\mu(g(T))-\mu(g(0)) \geq \int_{0}^{T} \left(\int_{M} \left|Ric(g) +\nabla\nabla f-\alpha -g\right|^{2}_{g}(2\pi)^{-n} e^{-f} dm\right)(s) ds
\end{equation}
To see this, we fix a partition $P_{N} = \{0=t_{0} < t_{1} <\dots <t_{N} =T\}$ of $[0,T]$, and write
\begin{equation*}
\mu(T)-\mu(0) = \sum_{i=1}^{N}\frac{\mu(t_{i})-\mu(t_{i-1})}{t_{i}-t_{i-1}}(t_{i}-t_{i-1}).
\end{equation*}
Let $f_{i}$ be the smooth function satisfying $\W(g_{i},f_{i},\frac{1}{2}) = \mu(t_{i})$, and let $f_{i}(t)$ be the solution to the backwards heat equation~(\ref{f evolution}) on $[t_{i-1},t_{i}]$ with $f_{i}(t_{i}) = f_{i}$.  Then by the mean value theorem we have
\begin{equation*}
	\begin{aligned}
\frac{\mu(t_{i})-\mu(t_{i-1})}{t_{i}-t_{i-1}} &\geq\frac{
	\W(g_{i},f_{i},\frac{1}{2})
	-\W(g_{i-1},f_{i}(t_{i-1}),\frac{1}{2})}{t_{i}-t_{i-1}}\\
	&=(t_{i}-t_{i-1})\frac{d}{dt}\bigg|_{t=t_{i}^{*}} 
	\W\left(g(t),f_{i}(t),\frac{1}{2}\right),
\end{aligned}
\end{equation*}
for some $t_{i}^{*} \in (t_{i-1},t_{i})$.  Using the result of the computation in the proof of Theorem~\ref{W monotonicity} we have
\begin{equation*}
\mu(T)-\mu(0) \geq \sum_{i=1}^{N}(t_{i}-t_{i-1})\left(\int_{M} \left|Ric(g) +\nabla\overline{\nabla} f_{i}-\alpha -g\right|^{2}_{g}  e^{-f_{i}} dm\right)(t_{i}^{*}).
\end{equation*}
Taking the $\liminf$ as $N\rightarrow \infty$ proves the result.  Since $\mu$ is increasing and bounded above, it follows immediately that that the bracketed term on the right had side of~(\ref{decay inequality}) goes to zero along a subsequence of times $t_{i} \in [i, i+1]$.  In particular, we have
\begin{equation*}
\lim_{i \rightarrow \infty} \int_{M} |\nabla\overline{\nabla} f_{i}  -\nabla\overline{\nabla} u(t_{i})|^{2}e^{-f_{i}}dm =0.
\end{equation*}
Now observe that $ |\nabla\overline{\nabla} f_{i}  -\nabla\overline{\nabla} u(t_{i})|^{2} \geq n^{-1}|\Delta(f-u)|^{2}$.   Applying Proposition~\ref{prop: improved pman}, we obtain
\begin{equation*}
\lim_{i \rightarrow \infty} \int_{M} |\Delta f_{i}|^{2} e^{-f_{i}}dm =0.
\end{equation*}
 The second item follows, using the upper bound for $f$, and the observation
\begin{equation*}
\int_{M}|\nabla f|^{2} dm  = -\int_{M}f \Delta f dm \leq C\left(\int_{M}|\Delta f|^{2}dm\right)^{1/2}
\end{equation*}
where $C$ depends only on the bound for $f$. 

Finally, we prove the third item. Here our argument differs somewhat from
\cite{TZ}. First, it follows from Jensen's inequality that 
\begin{equation*}
\int_{M}fe^{-f} \leq (2\pi)^{n}\log\left((2\pi)^{-n}V\right).
\end{equation*}
Thus, it suffices to prove a lower bound.  Set 
\begin{equation*}
\tilde{f} := f - V^{-1}\int_{M}(f-u)e^{-u} dm.
\end{equation*}
By the weighted Poincar\'e inequality of Lemma~\ref{poincare}, and the choice of normalization we have
\begin{equation*}
\begin{aligned}
\int_{M}(\tilde{f}-u)^{2}dm &\leq C \int_{M}(\tilde{f}-u)^{2}e^{-u}dm \\
&\leq C\int_{M}\left(|\nabla f|^{2} + |\nabla u|^{2}\right)e^{-u}dm\\
\end{aligned}
\end{equation*}
for a constant $C$ depending only on $g_{0}$.  From this and the upper bound for $|f|$ we obtain
\begin{equation*}
\begin{aligned}
\int_{M} |\tilde{f}-u|e^{-f}dm &\leq C \int_{M}|\tilde{f}-u|e^{-u}dm\\
& \leq C' \int_{M}(\tilde{f}-u)^{2}e^{-u}dm\\
&\leq C'' \int_{M}( |\nabla f|^{2} + |\nabla u|^{2}) e^{-u}dm.
\end{aligned}
\end{equation*}
In particular, applying item ({\it ii}), we have that
\begin{equation}\label{eq: f-u decay}
\lim_{i\rightarrow \infty} \int_{M} |\tilde{f_{i}}-u|e^{-f_{i}}dm =0.
\end{equation}
Moreover, by Jensen's inequality we have
\begin{equation*}
\begin{aligned}
\tilde{f}-f &= V^{-1}\int_{M}\log(e^{u-f})e^{-u}dm\\
&\leq \log\left( V^{-1}\int_{M}e^{-f}dm \right)\\
&= \log((2\pi)^{n}V^{-1}).
\end{aligned}
\end{equation*}
As a result, we have
\begin{equation*}
(2\pi)^{-n}\int_{M} (u-f)e^{-f}dm \leq \log((2\pi)^{n}V^{-1}) + \int_{M}(u-\tilde{f})e^{-f}dm
\end{equation*}
Rearranging this equation gives
\begin{equation*}
(2\pi)^{-n}\int_{M}fe^{-f} \geq -||u||_{C^{0}} + \log((2\pi)^{-n}V) -(2\pi)^{-n} \int_{M}|\tilde{f}-u|e^{-f}dm.
\end{equation*}
In particular, by Propostion~\ref{prop: improved pman} and  equation~\eqref{eq: f-u decay} we obtain that 
\begin{equation*}
\lim_{i\rightarrow \infty} \int_{M}f_{i}e^{-f_{i}} \geq (2\pi)^{n}\log((2\pi)^{-n}V),
\end{equation*}
which finishes the proof.
\end{proof}

As a result we obtain the following important corollary;

\begin{cor}\label{cor:mu max}
Suppose that the twisted Mabuchi energy of $(M,J)$ is bounded below on the
K\"ahler class $2\pi c_{1}(M)-\alpha$.  Let $\omega(t)$ be a solution of
the tKRF, with $\omega(0)=\omega_{0} \in 2\pi c_{1}(M)-\alpha$. Then
\begin{equation*}
\lim_{t\rightarrow \infty}\mu\left(\omega(t),\frac{1}{2}\right) =
\log\left((2\pi)^{-n}Vol(M)\right).
\end{equation*}
Moreover, if $\omega_0 \in 2\pi c_{1}(M)-\alpha$ satisfies 
\begin{equation*}
\mu(\omega_0, \frac{1}{2}) = \log\left((2\pi)^{-n}Vol(M)\right)
\end{equation*}
then $\omega_0$ is a twisted K\"ahler-Einstein metric.
\end{cor}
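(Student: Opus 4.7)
The plan is to prove both assertions using the monotonicity of $\mu(\omega(t),\frac{1}{2})$, the upper bound of Lemma~\ref{lem: mu upper bound}, and the estimates for the minimizer $f_{t}$ from Proposition~\ref{prop: f est}.

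For the first assertion, since $\mu(\omega(t),\frac{1}{2})$ is monotone nondecreasing and bounded above by $\log((2\pi)^{-n}\vol(M))$, the limit exists and is at most this value. To obtain a matching lower bound, I will evaluate $\W(g(t_{i}),f_{t_{i}},\frac{1}{2}) = \mu(\omega(t_{i}),\frac{1}{2})$ along the subsequence $t_{i}$ supplied by Proposition~\ref{prop: f est}, splitting it into three pieces. The $|\nabla f|^{2}e^{-f}$ piece vanishes in the limit by item~(ii) combined with the uniform lower bound on $f_{t_{i}}$ from item~(i). The $(R-\Tr_{g}\beta)e^{-f}$ piece has integrand converging uniformly to a constant: Proposition~\ref{prop: improved pman} gives $\Delta u(t)\to 0$, and tracing $\ddb u = \omega + \alpha - \mathrm{Ric}(\omega)$ shows $R-\Tr_{g}\beta = 2n - \Delta u$. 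Finally, item~(iii) directly evaluates the limit of the $(f-2n)e^{-f}$ piece. Combining the three limits produces the required value.

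For the second assertion, suppose $\omega_{0}$ saturates the upper bound. Then by monotonicity $\mu(\omega(t),\frac{1}{2})$ is constant along the twisted K\"ahler-Ricci flow starting at $\omega_{0}$. Passing to the unnormalized flow $\tilde{g}(s) = (1-2s)\omega(-\log(1-2s))$ with $\tau(s)=\frac{1-2s}{2}$ and using the scale invariance from Proposition~\ref{mu properties}(i), the quantity $\mu(\tilde{g}(s),\tau(s))$ is likewise constant. Applying the monotonicity formula of Theorem~\ref{W monotonicity} at $s=0$ with minimizer $f_{0}$ forces the pointwise vanishing of the integrand; in particular
\[
\mathrm{Ric}(\omega_{0}) + \nabla\nabla f_{0} - \beta - \omega_{0} = 0.
\]
Taking the $(1,1)$-part yields $\ddb f_{0} = \omega_{0}+\alpha-\mathrm{Ric}(\omega_{0}) = \ddb u_{0}$, so $f_{0} = u_{0}+c$ for some constant $c$. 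Substituting into the Euler-Lagrange equation \eqref{min equation}, using $R-\Tr_{g}\beta = 2n-\Delta u_{0}$, and integrating against $e^{-u_{0}}\,dm$ — at which point integration by parts eliminates the Laplacian and gradient-squared terms simultaneously — reduces everything to a single algebraic relation pinning down $\frac{1}{V}\int_{M} u_{0}e^{-u_{0}}\,dm$. The saturation hypothesis forces this quantity to realize the equality case of Jensen's inequality applied to the density $e^{-u_{0}}/V$, from which $u_{0}\equiv 0$, and hence $\mathrm{Ric}(\omega_{0}) = \omega_{0}+\alpha$.

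The main obstacle is the last step. The monotonicity formula alone only yields the soliton-type relation $\ddb f_{0} = \ddb u_{0}$, which is consistent with $u_{0}$ being a nontrivial potential whose $(1,0)$-gradient is a holomorphic vector field (the $(2,0)$-part of the same tensorial equation) lying in the kernel of $\alpha$ (from the $\beta(\nabla f,\nabla f)$ term of Theorem~\ref{W monotonicity}). Ruling this out requires pairing the scalar information in the Euler-Lagrange equation with the sharp equality case of Jensen's inequality, and care must be taken throughout with the conversion between the real quantities of Section~\ref{sec: w func} and the complex quantities used in the remainder of the paper.
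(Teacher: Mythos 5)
Your proof is correct, and for the first assertion it follows essentially the same path as the paper: $\mu(\omega(t),\tfrac12)$ is monotone and bounded above, and the subsequential limit is computed via Proposition~\ref{prop: f est} by decomposing $\W(g(t_i),f_{t_i},\tfrac12)$ into the gradient, curvature, and entropy pieces, each of which you control by items (i)--(iii) and Proposition~\ref{prop: improved pman}. (Monotonicity then upgrades the subsequential limit to the full limit.) This is exactly what the paper means by ``follows immediately from Proposition~\ref{prop: f est}''.

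For the second assertion you diverge from the paper at the point where the vanishing of the monotonicity integrand has given $\nabla\bar\nabla f_0 = \nabla\bar\nabla u_0$, hence $f_0 = u_0 + c$. The paper at this point uses a short structural observation: the constant $\tilde f = -n\log(2\pi) + \log\vol(M)$ is \emph{itself} a minimizer of $\W(g(0),\cdot,\tfrac12)$ (since the infimum $\mu(g(0),\tfrac12)$ equals the value $\W(g(0),\tilde f,\tfrac12)$, which is a purely topological quantity), and since every minimizer has the form $u_0 + \mathrm{const}$, the constant $\tilde f$ forces $u_0$ constant, hence $u_0 \equiv 0$ by normalization. You instead substitute $f_0 = u_0 + c$ into the Euler--Lagrange equation~\eqref{min equation}, use $R - \Tr_g\beta = 2n - \Delta u_0$, integrate against $e^{-u_0}dm$ (killing the $\Delta u_0 - |\nabla u_0|^2$ term by parts), and thereby evaluate $\frac1V\int u_0 e^{-u_0}dm$; the normalization $c = \log((2\pi)^{-n}V)$ makes this equal zero, and the equality case of Jensen (which is exactly the inequality $c(t)\leq 0$ noted in Section~\ref{sec: Perelman}) forces $u_0$ constant. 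Both arguments are sound; the paper's route is shorter once one notices the constant is a minimizer, while yours is a direct computation that also makes transparent why the soliton-type $(2,0)$ relation $\nabla_j\nabla_k f_0 = 0$ never needs to be ruled out separately -- the scalar information alone already pins $u_0$ down. One small point of rigor: you should invoke Proposition~\ref{mu properties}(iv) (or the mean-value/backward-difference-quotient argument from the proof of Proposition~\ref{prop: f est}) rather than Theorem~\ref{W monotonicity} directly, since $\mu$ being constant is what forces the integrand to vanish, and that passage from $\mu$ to $\W$ runs through the minimizer.
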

\begin{proof}
The first statement follows immediately from Proposition~\ref{prop: f est}.  We prove the second statement.  Let $\omega(t)$ be a solution of the tKRF with $\omega(0)= \omega_{0}$.  Then by the monotonicity of $\mu$, and Lemma~\ref{lem: mu upper bound} we have
\begin{equation*}
\log\left((2\pi)^{-n}Vol(M)\right) = \mu(\omega_{0}, \frac{1}{2}) \leq \mu(\omega(t), \frac{1}{2}) \leq \log\left((2\pi)^{-n}Vol(M)\right),
\end{equation*}
Thus, $\mu(t) = \log\left((2\pi)^{-n}Vol(M)\right)$ for all $t$.  Let $f$ be any minimizer of $\W(g(0), \cdot, \frac{1}{2})$.  Then by Proposition~\ref{mu properties} we have that 
\begin{equation*}
\left|Ric(t_{0}) + \nabla \overline{\nabla} f -\alpha -g(0)\right|^{2}=0.
\end{equation*}
By the definition of the Ricci potential, we must have
\begin{equation*}
\nabla \overline{\nabla} f  = \nabla \overline{\nabla} u,
\end{equation*}
and hence $ f= u+c$ for some constant $c$.  However, we clearly have that $
\tilde{f}= -n\log(2\pi) + \log( Vol(M) )$ is a minimizer of $\W(g(0),
\cdot, \frac{1}{2})$, which follows by direct computation.  As a result we
have that $u$ is a constant.  It follows from the normalizations that
$u=0$, and  $\omega_{0}$ is twisted K\"ahler-Einstein. 
\end{proof}

\section{The convergence of the twisted K\"ahler-Ricci flow}\label{sec: convergence}
\newcommand{\tKE}{g_{tKE}}

In this section we prove Theorem~\ref{thm:main}. The argument builds on
Tian-Zhu~\cite{TZ}, in that the behaviour of Perelman's entropy along the
flow is exploited. One important difference is that our distance function
$d(g)$ below measures the oscillation of the K\"ahler potentials rather
than a $C^{3,\alpha}$ norm of the metric. 

Suppose that $\tKE$ is a twisted K\"ahler-Einstein metric on $M$,
satisfying the equation
\begin{equation}\label{eq:tKE2}
	\mathrm{Ric}(\tKE) = \tKE + \alpha, 
\end{equation}
where $\alpha$ is a non-negative, closed, (1,1)-form. Write $G\subset
Aut_0(M)$ for the connected component of the group of biholomorphisms
preserving $\alpha$, i.e.
\[ G = \{ \tau\in Aut_0(M)\,:\, \tau^*(\alpha) = \alpha\}. \]
By Berndtsson's generalization~\cite{Ber} of Bando-Mabuchi's uniqueness
result~\cite{BM}, every solution of \eqref{eq:tKE2} is given by
$\tau^*\tKE$ for some $\tau\in G$. 

If $g = \tKE + i\partial\overline{\partial}\phi$, and $\tau\in G$, let
us define $\phi_\tau$ by
\[ \tau^*g = \tKE + i\partial\overline{\partial}\phi_\tau, \]
and let 
\[ d(g) = \inf\{ \mathrm{osc}\,\phi_\tau\,:\, \tau\in G\}. \]
Note that $d(g)$ is independent of the normalization of the K\"ahler
potentials.

Let us write $u_g$ for the twisted Ricci potential of $g$, normalized in any way
we like; i.e. 
\[ i\partial\overline{\partial} u_g = \omega + \alpha - \mathrm{Ric}(g). \]
Note that we can take $u_{\tau^*g} = \tau^*u_g$ for any $\tau\in G$. We will
work with $\osc u_g$, which is independent of the normalization,
and $\mathrm{osc}\, u_{\tau^*g} = \mathrm{osc}\, u_g$. 

The normalized twisted K\"ahler-Ricci flow \eqref{eq:twistedKRF}
is given by
\[ \frac{\partial}{\partial t} \phi(t) = u_{g(t)} + c(t), \]
where $c(t)$ is a time dependent constant depending on our
normalizations.

Theorem~\ref{thm:pman} implies that there is a constant $K$ depending on
$g(0)$, such that $\mathrm{osc}\, u_{g(t)} < K$ for all $t$. Moreover $K$
can be chosen uniformly as long as $g(0)$ is bounded in $C^3$ relative to
$\tKE$.

We will need the
following smoothing result for the twisted K\"ahler-Ricci flow. 
\begin{thm}\label{thm:smoothing}
	Suppose that $\omega(0) = \tKE +
	i\partial\overline{\partial}\phi(0)$
	for some fixed background metric $\tKE$,
	and $\osc\phi(0), \osc u(0) < K$ for some $K$. Then there exist $s,
	C > 0$ depending on $K$ (and $\tKE$), 
	such that at time $s$ along the twisted K\"ahler-Ricci 
	flow starting with
	$\omega(0)$, we have
	\[ \begin{gathered}
		C^{-1}\tKE < g(s) < C\tKE,\\
		\Vert g(s)\Vert_{C^3} < C, 
	\end{gathered}\]
	where the $C^{3}$ norm is measured using $\tKE$. 
\end{thm}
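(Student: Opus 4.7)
The plan is to establish the smoothing statement in three stages: short-time $L^{\infty}$ control on the potential $\phi$ and its time derivative $\dot\phi$; a time-weighted parabolic second-order estimate that absorbs the absence of a priori $\Delta\phi(0)$ bounds into a negative power of $t$; and higher regularity via the parabolic theory for uniformly parabolic complex Monge--Amp\`ere equations.

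For the first stage I would use the evolution equation $\ddt u = \Delta u + u - c(t)$ derived in Section~\ref{sec: Perelman}. Applying the maximum principle to $u - c(t)$ gives $\osc u(t)\leqslant e^{t}\osc u(0)$, and since $\dot\phi = u + \mathrm{const}$ this controls $\osc\dot\phi(t)$ and, by integration, $\osc\phi(t)$ on some interval $[0,s_{0}]$ in terms of $K$. Normalizing the time-dependent constant in the flow so that, say, $\int_{M}\phi(t)\,\tKE^{n}=0$ for every $t$, these oscillation bounds upgrade to $L^{\infty}$ bounds $\|\phi(t)\|_{C^{0}}+\|\dot\phi(t)\|_{C^{0}}\leqslant C(K)$ on $[0,s_{0}]$.

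For the second stage, since $\mathrm{Ric}(\tKE) = \tKE + \alpha$ the flow reads
\[
	\dot\phi = \log\frac{(\tKE + \ddb\phi)^{n}}{\tKE^{n}} + \phi + \mbox{const.}
\]
I would apply the parabolic maximum principle to the time-weighted quantity
\[
	Q(x,t) = t\bigl(\log(n + \Delta_{\tKE}\phi) - A\phi\bigr),
\]
where $A$ is chosen large with respect to a lower bound on the holomorphic bisectional curvature of $\tKE$. The time weight ensures $Q(\cdot,0)\equiv 0$, so the argument never sees the initial second-order data. The standard Aubin--Yau--Chen--He computation shows that at an interior maximum of $Q$ the term $-A\Delta_{g(t)}\phi = A(\mathrm{tr}_{g}\tKE - n)$ dominates the positive curvature contributions, yielding $t\,\mathrm{tr}_{g}\tKE\leqslant C(K)$. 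Combining this with the two-sided bound on $\omega^{n}/\tKE^{n}$ provided by the $L^{\infty}$ bound on $\dot\phi - \phi$, and with an arithmetic-geometric mean inequality applied to the eigenvalues of $g(t)$ relative to $\tKE$, one obtains a smoothing estimate of the form
\[
	n + \Delta_{\tKE}\phi(t) \leqslant C(K)t^{-(n-1)}
\]
on $(0,s_{0}]$. The form $\alpha$ enters here only through a favourable sign (via $\mathrm{Ric}(\tKE)\geqslant \tKE$), so the extension of the untwisted argument is routine.

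Finally, fixing any $s\in (0,s_{0}]$, the upper bound on $\mathrm{tr}_{\tKE}g(s)$ together with the two-sided bound on $\omega^{n}/\tKE^{n}$ pins down the eigenvalues of $g(s)$ relative to $\tKE$, giving $C^{-1}\tKE < g(s) < C\tKE$. Since the flow is now uniformly parabolic in a neighborhood of $s$, Evans--Krylov produces $C^{2,\alpha}$ bounds on $\phi$, and iterating parabolic Schauder estimates yields the $C^{3}$ (in fact $C^{\infty}$) control of $g(s)$. The main obstacle is the time-weighted second-order estimate: the exponent of the time weight and the constant $A$ must be balanced so that the initial data issue vanishes automatically, while the maximum-principle computation at an interior maximum still closes in the presence of the coupling between $\Delta_{\tKE}\phi$, $\mathrm{tr}_{g}\tKE$, and $\dot\phi$.
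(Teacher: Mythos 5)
Your proof is correct in structure, but it takes a genuinely different route from the paper: the paper does not prove this theorem directly, but instead cites Proposition 2.1 of Sz\'ekelyhidi--Tosatti together with the analogous parabolic smoothing result of Song--Tian, after first remarking that normalizing $\phi$ and $u_g$ upgrades the $\osc$-bounds to $\sup$-bounds. What you have written is, in effect, an honest self-contained reproof of the key ingredient in those references, adapted to the twisted setting. Your three-stage plan --- propagate $L^\infty$ control of $\phi$ and $\dot\phi$ by the maximum principle applied to the $u$-evolution; a time-weighted Aubin--Yau/Chen--He second-order estimate using $Q=t(\log(n+\Delta_{\tKE}\phi)-A\phi)$ so that the bad initial data is killed by the factor $t$; then Evans--Krylov plus parabolic Schauder --- is exactly the standard mechanism, and the twist $\alpha$ indeed only enters through the sign in $\mathrm{Ric}(\tKE)\geqslant\tKE$. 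The benefit of your route is self-containedness and that the constants visibly depend only on $K$ and $\tKE$; the benefit of the paper's route is brevity.

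One quantitative slip worth flagging: the maximum principle on $Q$ does not give the clean polynomial rate $n+\Delta_{\tKE}\phi \leqslant C(K)t^{-(n-1)}$ that you state. At an interior maximum of $Q$ at time $t_0$, the inequality that closes is of the form $t_0\,\mathrm{tr}_g\tKE\leqslant (n-1)\log\mathrm{tr}_g\tKE + C$, which yields $\mathrm{tr}_g\tKE\leqslant C t_0^{-1}\log(1/t_0)$ rather than $Ct_0^{-1}$, and after feeding this back into $Q$ one only concludes $Q\leqslant C$, hence $\log\mathrm{tr}_{\tKE}g\leqslant C/t + A\Vert\phi\Vert_{C^0}$, i.e. an \emph{exponential} bound $\mathrm{tr}_{\tKE}g\leqslant Ce^{C/t}$. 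This is still perfectly adequate for the theorem as stated, since one evaluates at a single fixed time $s=s(K)>0$, but the displayed rate in your second stage should be corrected (or the estimate rearranged) if you intend the polynomial decay to carry quantitative content.
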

\begin{proof}
	This follows from Proposition 2.1 in
	Sz\'ekelyhidi-Tosatti~\cite{SzT}, and is similar to the result of
	Song-Tian~\cite{SongTian} for the K\"ahler-Ricci flow. 
	One just has to normalize $\phi$ and $u_g$ first 
	in order to bound $\sup|\phi(0)|$ and
	$\sup|u(0)|$.  
\end{proof}

In addition we will use the following result, which follows from the work of
Phong-Song-Sturm-Weinkove \cite{PSSW}.   
\begin{thm}\label{thm:PSSW}
	Suppose that along the twisted K\"ahler-Ricci 
	flow $g(t)$ we have $d(g(t)) < K$ for
	a constant independent of time. Then $g(t)$ converges to a
	twisted K\"ahler-Einstein metric exponentially fast. 
\end{thm}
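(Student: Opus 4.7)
My plan is to convert the $C^0$ oscillation bound on the K\"ahler potentials (modulo the $G$-action) into smooth regularity uniform in $t$, identify subsequential limits of the flow as twisted K\"ahler-Einstein metrics via the entropy monotonicity of Section~\ref{sec: Mabuchi}, and then exploit a spectral gap at the tKE limit to obtain exponential convergence. Most of the ingredients are already in place from the preceding sections; the final step is where the argument of Phong--Song--Sturm--Weinkove \cite{PSSW} enters.

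For each $t$, choose $\tau_t\in G$ with $\osc\phi_{\tau_t} \leq K+1$. Theorem~\ref{thm:pman} provides a uniform $C^0$ bound on the twisted Ricci potential $u_{g(t)}$, invariant under pullback by $\tau_t$. The pullback $\tau_t^*\omega(t)$ then satisfies a complex Monge--Amp\`ere equation whose right hand side is controlled in $C^0$ by $\phi_{\tau_t}$ and $u_{g(t)}$, so Yau's $C^2$ estimate, Evans--Krylov/Calabi $C^3$ estimates, and parabolic bootstrapping yield uniform $C^k$ bounds on $\tau_t^*g(t)$ relative to $\tKE$, for every $k$. Arzel\`a--Ascoli then shows that any sequence $t_i\to\infty$ admits a subsequence along which $\tau_{t_i}^*g(t_i)\to g_\infty$ smoothly. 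Continuity of $\mu$ in the $C^2$ topology (Proposition~\ref{mu properties}({\it ii})) combined with Corollary~\ref{cor:mu max} forces $\mu(g_\infty,1/2) = \log[(2\pi)^{-n}Vol(M)]$, whence $g_\infty$ is a twisted K\"ahler-Einstein metric.

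To upgrade to exponential convergence, I would linearize the flow~\eqref{eq:twistedKRF} about $g_\infty$. The relevant operator on K\"ahler potentials is essentially $-\Delta_{g_\infty}-1$; by Berndtsson's uniqueness theorem~\cite{Ber}, its kernel consists precisely of the holomorphy potentials of the vector fields in the Lie algebra of $G$, and these are the directions eliminated by the infimum defining $d(g)$. On the orthogonal complement one obtains a spectral gap $\delta>0$, and combined with the weighted Poincar\'e inequality of Lemma~\ref{poincare} this gives
\[ \ddt Y(t) \leq -\delta\, Y(t), \qquad Y(t) := \int_M |\nabla u_{g(t)}|^2\,\omega(t)^n, \]
once $g(t)$ is sufficiently close to $g_\infty$. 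Exponential decay of $Y$, together with the uniform higher-order estimates from the first step and parabolic Schauder theory, then produces exponential decay of $u$ and hence of $\phi_{\tau_t}$ in every $C^k$ norm.

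The main obstacle is this last step. The biholomorphisms $\tau_t$ need not be precompact in $G$ and may drift along non-compact directions of $\mathrm{Aut}_0(M)$, so one must linearize in a gauge that tracks this drift and check that $g(t)$ remains in the neighborhood of $g_\infty$ once it enters it. The twisted setting itself introduces no essentially new difficulty at this point: the spectral-gap argument of \cite{PSSW} goes through once Berndtsson's uniqueness result is substituted for Bando--Mabuchi, and the weighted Poincar\'e inequality of Lemma~\ref{poincare} is used in place of its untwisted counterpart.
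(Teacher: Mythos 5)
The first two stages of your argument run parallel to the paper's. You derive uniform higher-order bounds on $\tau_t^*g(t)$ from the potential oscillation bound and the Ricci potential bound; the paper gets this from the smoothing result Theorem~\ref{thm:smoothing} (Sz\'ekelyhidi--Tosatti), while you sketch Yau/Evans--Krylov/parabolic bootstrapping, which is the same estimate package. You then extract a subsequential $C^{2,\alpha}$ limit and identify it as twisted K\"ahler--Einstein; the paper does this via $u\to0$ from Proposition~\ref{prop: improved pman}, whereas you use continuity of $\mu$ together with Corollary~\ref{cor:mu max}. Both are sound and essentially interchangeable.

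The gap is in the exponential-convergence step, and it is precisely the one you flag but do not close. Linearizing about a fixed limit $g_\infty$ requires controlling the gauge transformations $\tau_t$, which need not converge, and your inequality $\ddt Y \leq -\delta Y$ is only derived assuming $g(t)$ stays near $g_\infty$ --- circular without a no-drift argument. The paper's proof circumvents this entirely, and the key observation is missing from your proposal: the Phong--Song--Sturm--Weinkove argument does \emph{not} linearize about a limit metric. It derives a differential inequality for $Y(t)=\int_M|\nabla u_{g(t)}|^2\,\omega(t)^n$ directly along the flow, driven by a uniform positive lower bound on the first eigenvalue of $\overline{\partial}$ acting on $TM$ with respect to $g(t)$. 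Since that eigenvalue is unchanged under pulling back $g$ by a biholomorphism, the uniform $C^3$ bound on $\tau_t^*g(t)$ --- which a priori controls the geometry only modulo $G$ --- already yields a uniform eigenvalue bound for $g(t)$ itself, with no need to fix a gauge, identify a limit first, or worry about drift along noncompact directions of $G$. Without this invariance observation your plan, as written, does not close, and you would also need to verify that the Lichnerowicz-type kernel statement you invoke (holomorphy potentials of $\mathrm{Lie}(G)$) is correct in the twisted setting; the paper avoids needing this by working with the $\overline{\partial}$-eigenvalue rather than the linearized operator on potentials.
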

\begin{proof}
	By Perelman's estimate we can assume that also $\osc u_{g(t)} <
	K$ for all $t$, increasing $K$ if necessary. 
	Fix a time $T$, and let $\tau\in G$ such that
	\[ \tau^*g(T) = \tKE + \ddb\phi, \]
	with $\osc\phi < K$. The Ricci potential still satisfies 
	$\osc u_{\tau^*g(T)} < K$, so the smoothing property applied to
	the twisted KR flow starting with $\tau^*g(T)$ implies that for some $s,
	C$ (depending on $K$) we have
	\[ \begin{gathered}
		C^{-1}\tKE < \tau^*g(T+s) < C\tKE \\
		\Vert \tau^*g(T+s) \Vert_{C^3(\tKE)} < C.
	\end{gathered}\]
	Since $T$ was arbitrary, this shows that up to the action of $G$,
	the metrics along the flow are uniformly bounded in $C^3$, relative
	to $\tKE$. It follows that a subsequence converges in
	$C^{2,\alpha}$, and the limit is necessarily a twisted
	K\"ahler-Einstein metric, since $u$ tends to a constant by
	Proposition~\ref{prop:  improved pman}. We can also obtain
	exponential convergence without needing the action of $G$, by
	following the argument of Phong-Song-Sturm-Weinkove \cite{PSSW}. Indeed 
	the uniform $C^3$ bound implies that the
	first eigenvalue of $\overline{\partial}$ on $TM$  (which is
	invariant under the action of $G$) is bounded away from zero
	uniformly. 
\end{proof}

We now show that the distance $d(g)$ is continuous with respect
to the $C^3$ metric on $g$. 
\begin{lem}\label{lem: d cont}
	If $g_k\to g$ in $C^3$ (with respect to $\tKE$), then $d(g_k)\to
	d(g)$. 
\end{lem}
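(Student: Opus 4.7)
The proof rests on the simple observation that the oscillation of a function on $M$ is invariant under precomposition with any diffeomorphism of $M$. Writing $g_k-g=\ddb\eta_k$ with the normalization $\int_M\eta_k\,dV_\tKE=0$, the hypothesis $g_k\to g$ in $C^3$ combined with elliptic regularity for the Poisson equation $\Delta_\tKE\eta_k=\Tr_\tKE(g_k-g)$ forces $\eta_k\to 0$ in $C^{2,\alpha}$; in particular $\osc\eta_k\to 0$.

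For any $\tau\in G$, pulling back the identity $g_k-g=\ddb\eta_k$ by $\tau$ yields $\tau^*g_k-\tau^*g=\ddb(\eta_k\circ\tau)$, so if $\tau^*g=\tKE+\ddb\phi_\tau$ and $\tau^*g_k=\tKE+\ddb\phi_{k,\tau}$, then $\phi_{k,\tau}=\phi_\tau+\eta_k\circ\tau+\mathrm{const}$. Since $\tau$ is a bijection of $M$, $\osc(\eta_k\circ\tau)=\osc\eta_k$, and the triangle inequality for oscillation then gives the uniform bound
\[
|\osc\phi_{k,\tau}-\osc\phi_\tau|\leq\osc\eta_k,\qquad\text{independently of }\tau\in G.
\]

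With this uniform estimate, both semicontinuities follow directly. For $\limsup_k d(g_k)\leq d(g)$: given $\epsilon>0$, pick $\tau\in G$ with $\osc\phi_\tau\leq d(g)+\epsilon$; then $d(g_k)\leq\osc\phi_{k,\tau}\leq d(g)+\epsilon+\osc\eta_k$, and $k\to\infty$ followed by $\epsilon\to 0$ gives the inequality. For $\liminf_k d(g_k)\geq d(g)$: for each $k$, pick $\tau_k\in G$ with $\osc\phi_{k,\tau_k}\leq d(g_k)+1/k$; applying the uniform estimate at $\tau=\tau_k$ gives $d(g)\leq\osc\phi_{\tau_k}\leq d(g_k)+1/k+\osc\eta_k$, and $k\to\infty$ yields the inequality.

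The only nontrivial step is this invariance of oscillation under pullback, which lets us bypass what would otherwise be a delicate compactness argument on $\{\tau_k\}\subset G$: the group $G$ is generally non-compact and a priori there is no reason $\tau_k$ should remain in a bounded subset. Because the error term $\osc(\eta_k\circ\tau_k)$ coincides with $\osc\eta_k$ regardless of $\tau_k$, no properness statement about the $G$-action on the space of K\"ahler potentials is needed.
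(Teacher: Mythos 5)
Your proof is correct, and it is genuinely different from --- and simpler than --- the argument given in the paper. The key observation you make is that writing $g_k - g = \ddb\eta_k$ and pulling back by any $\tau\in G$ gives $\phi_{k,\tau} = \phi_\tau + \eta_k\circ\tau + \mathrm{const}$, and since oscillation is invariant under precomposition with a bijection, $\osc(\eta_k\circ\tau) = \osc\eta_k$. This yields the uniform estimate $|\osc\phi_{k,\tau} - \osc\phi_\tau| \le \osc\eta_k$, valid for every $\tau\in G$ simultaneously, from which both semicontinuities drop out by choosing near-minimizers. This completely sidesteps any compactness assertion about the group $G$. The paper instead proceeds by first proving that the infimum defining $d(g)$ is attained: it takes a minimizing sequence $\tau_k$, uses the Perelman estimates and the smoothing theorem to get $C^1$-control on $\tau_k^* g(s)$, and extracts a locally uniformly convergent subsequence $\tau_k \to \tau_\infty \in G$. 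The lower semicontinuity then requires rerunning this compactness argument on a family of near-minimizers $\tau_k$ for the $g_k$. What the paper's approach buys is the additional fact that the infimum is attained (which is of independent interest but not actually used elsewhere in the paper), at the cost of invoking the full strength of Theorem~\ref{thm:smoothing}. Your argument buys continuity of $d$ at no cost beyond elliptic theory for the Laplacian, and in fact needs only $C^0$ convergence of $g_k$ (enough to force $\osc\eta_k\to 0$), not $C^3$. Both arguments are valid; yours is the more economical one.
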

\begin{proof}
	Recall that for $\tau\in G$ we wrote
	\[ \tau^*g = \tKE + \ddb\phi_\tau, \]
	and 
	\[ d(g) = \inf\{ \osc\phi_\tau\,:\, \tau\in G\}. \]
	We first prove that there exists a $\tau\in G$ realizing this
	infimum. Let $\tau_k\in G$ be a sequence so that $d(g) =
	\lim\osc\phi_{\tau_k}$. We can choose a constant $K$ such that
	\begin{equation}\label{ugK} \begin{gathered}
			\osc u_g < K \\
			\osc \phi_{\tau_k} < K.
		\end{gathered} 
	\end{equation}
	Using the smoothing property (applied to the twisted KR flow
	$\tau_k^*g(t)$) we can find $s, C$ such that
	\begin{equation}\label{gKE} 
			C^{-1} \tKE < \tau_k^*g(s) < C\tKE 
	\end{equation}
	Fix a point $p\in M$. Choosing a subsequence of the $\tau_k$ we can
	assume that $\tau_k(p) \to q$ for some $q\in M$. From \eqref{gKE}
	it follows that for large enough $k$, each $\tau_k$ maps an open
	coordinate neighborhood $B_p$ about $p$ to a coordinate neighborhood
	about $q$. The component functions of these $\tau_k$ are then given
	by uniformly bounded holomorphic functions on $B_p$, so after
	choosing a further subsequence, we can assume that the $\tau_k$
	converge when restricted to the half ball $\frac{1}{2}B_p$. 
	The open sets $\frac{1}{2}B_p$ cover $M$,
	so we can choose a finite subcover, and a subsequence of the
	$\tau_k$ will then converge over all of $M$ to a holomorphic map
	$\tau_\infty: M\to M$. Taking the limit in \eqref{gKE} we see that $\tau$
	is injective, and it is an open map so it is also surjective.
	Moreover, $\tau$ clearly preserves $\alpha$, and
	 the connected component of the identity in $Aut(M)$ is
	closed, so $\tau_\infty\in G$. It follows that $d(g) =
	\osc\phi_{\tau_\infty}$. 	

	Suppose now that $g_k\to g$ in $C^3$, and $\tau\in G$ realizes the
	infimum $d(g)$. Since $\tau^*g_k\to \tau^*g$, using the same $\tau$
	to bound each $d(g_k)$ we find that
	\[ \lim\sup d(g_k) \leqslant d(g). \]
	For the converse inequality suppose that for each $k$, $\tau_k\in
	G$ realizes the infimum $d(g_k)$. By the same argument as above (we
	can choose a uniform $K$ in \eqref{ugK} for all the $g_k$), up
	to choosing a subsequence we can assume that
	$\tau_k\to\tau_\infty$. Then $\tau_k^*g_k \to \tau_\infty^*g$, and
	using $\tau_\infty$ to bound $d(g)$ we get
	\[ d(g) \leqslant \lim\inf d(g_k). \]
	This shows that $d(g) = \lim d(g_k)$. 
\end{proof}

We will now write $\mu(g) = \mu\left(g, \frac{1}{2}\right)$ for Perelman's
entropy. 
We collect here a few of the previous results about $\mu$, from
Lemma~\ref{lem:  mu upper bound} and Corollary~\ref{cor:mu max}:
\begin{enumerate}
	\item $\mu$ is continuous in the $C^3$-norm, measured
		relative to $\tKE$.   
	\item For any initial metric $g(0)$, $\mu(g(t))$ is
		monotonically increasing along the twisted K\"ahler-Ricci 
		flow $g(t)$, and
		$\lim\mu(g(t)) = \Lambda$, where we fix
		$\Lambda = \log\left( (2\pi)^{-n} Vol(M) \right)$.  In particular, $\Lambda$ is 
		independent of $g(0)$. 
	\item $\mu(g) = \Lambda$ if and only if $g$ is a twisted KE metric.  By Berndtsson's 
		uniqueness theorem this is equivalent to: $\mu(g) = \Lambda$ if and only if $g = \tau^{*}\tKE$ 
		for a biholomorphism $\tau\in G$ of $(M,J)$ fixing $\alpha$. 
\end{enumerate}

The following is a consequence of the smoothing result. 
\begin{lem}\label{lem: short time d}
	Fix $K > 0$, and suppose that 
	\[ \begin{gathered}
		1 \leqslant d(g)  < K \\
		\osc u_{g(t)} <K, 
	\end{gathered}
	\]
	for all $t$ along the twisted KR flow $g(t)$ with initial metric $g$. 
	There exist $s, C >  0$ 
	depending on $K$, and a $\tau\in G$ such that
	\[ \begin{gathered}
		d(g(s)) \geqslant \frac{1}{2}, \\
		C^{-1}\tKE < \tau^*g(s) < C\tKE,\\ 
		\Vert \tau^*g(s)\Vert_{C^3} < C,
	\end{gathered}\]
	where the $C^3$-norm is measured with respect to the fixed
	metric $\tKE$.  
\end{lem}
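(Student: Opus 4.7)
The plan is to combine Lemma~\ref{lem: d cont} with the smoothing Theorem~\ref{thm:smoothing} and a simple oscillation estimate along the flow, exploiting the equivariance of the twisted K\"ahler-Ricci flow under the action of $G$.

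First I would invoke Lemma~\ref{lem: d cont} to extract a $\tau_0\in G$ realizing the infimum $d(g)\in[1,K)$, so that $\tau_0^*g=\tKE+\ddb\phi_{\tau_0}$ with $\osc\phi_{\tau_0}=d(g)<K$. Since the flow is $G$-equivariant, the twisted K\"ahler-Ricci flow starting at $\tau_0^*g$ is simply $\tau_0^*g(t)$, and $\osc u_{\tau_0^*g(t)}=\osc u_{g(t)}<K$ by hypothesis. After normalizing $\phi_{\tau_0}$ and $u_{\tau_0^*g}$ so that their sup-norms are also controlled in terms of $K$, Theorem~\ref{thm:smoothing} then produces constants $s_0, C>0$ depending only on $K$ (and $\tKE$) giving the two-sided metric bound and the $C^3$ bound on $\tau_0^*g(s_0)$ demanded by the last two conclusions of the lemma. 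Taking $\tau=\tau_0$ disposes of this half of the statement.

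The lower bound $d(g(s))\geq 1/2$ requires more care, because the minimizer realizing $d(g(s))$ need not coincide with $\tau_0$. My approach is to fix $\tau_s\in G$ realizing $d(g(s))$ (again using Lemma~\ref{lem: d cont}), write $\tau_s^*g(t)=\tKE+\ddb\phi_{\tau_s}(t)$, and normalize the potentials along the flow by $\ddt\phi_{\tau_s}(t)=u_{\tau_s^*g(t)}+c(t)$ for a time-dependent constant $c(t)$. Then $\osc \ddt\phi_{\tau_s}(t)=\osc u_{g(t)}<K$, so taking $p,q\in M$ where $\phi_{\tau_s}(0)$ attains its maximum and minimum and integrating in time yields
\[ \osc\phi_{\tau_s}(s)\geq \phi_{\tau_s}(s,p)-\phi_{\tau_s}(s,q) \geq \osc\phi_{\tau_s}(0)-sK \geq d(g)-sK \geq 1-sK, \]
where I used that $\tau_s$ is an admissible candidate for the infimum defining $d(g)$. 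Choosing $s\leq 1/(2K)$ then forces $d(g(s))\geq 1/2$.

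The only delicate point is the bookkeeping at the end: one sets $s=\min(s_0,1/(2K))$ and must verify that the smoothing estimates of Theorem~\ref{thm:smoothing} persist at this possibly shrunken time, with a new constant depending on $s$. This is a standard feature of parabolic smoothing---the $C^3$ bounds hold at every positive time in $(0,s_0]$, with the constant degenerating as $t\to 0$---and is the only step of the argument that depends on a property of the smoothing estimate not literally spelled out in the statement cited.
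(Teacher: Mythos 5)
Your proposal is correct and follows essentially the same route as the paper: equivariance of the flow, the smoothing Theorem~\ref{thm:smoothing} applied after pulling back by a $\tau$ with $\osc\phi_\tau<K$, and a time-derivative estimate on the oscillation of K\"ahler potentials to preserve $d(g(s))\geqslant 1/2$.

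One small difference worth noting: for the lower bound $d(g(s))\geqslant 1/2$ you fix a minimizer $\tau_s$ at time $s$ (invoking the existence result from Lemma~\ref{lem: d cont}) and compare $\osc\phi_{\tau_s}(0)\geqslant d(g)$ with $\osc\phi_{\tau_s}(s)$. The paper avoids the minimizer entirely: it shows that for \emph{every} $\tau\in G$ one has $\osc\phi_\tau(s) > \osc\phi_\tau(0) - sK \geqslant 1 - sK$, and then takes the infimum over $\tau$ to conclude $d(g(s))\geqslant 1/2$. This is slightly more direct, since it never needs the infimum at time $s$ to be attained, but both arguments are valid and rest on the same one-line oscillation estimate. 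Your final remark about having to shrink $s$ to $\min(s_0,1/(2K))$ and the persistence of the smoothing estimate at earlier times is a legitimate point; the paper handles it with the terse phrase ``choose $s$ even smaller than we did in the previous step if necessary,'' and as you say the justification is that the parabolic estimates underlying Theorem~\ref{thm:smoothing} (via Sz\'ekelyhidi--Tosatti) hold at every positive time up to a fixed one, with constants depending on the chosen time.
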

\begin{proof}
	For any $\tau\in G$, the flow $\tau^*g(t)$ is a solution of the
	twisted KR flow, and it follows by our assumption that 
	$\osc u_{\tau^*g(t)} < K$ along the flow with initial metric $\tau^*g$. Write
	\[ \tau^*g(t) = \tKE + i\partial\overline{\partial}\phi(t), \]
	so that $\mathrm{osc}\,\phi(0) \geqslant 1$ by the assumption that
	$d(g)\geqslant 1$. 
	Along the twisted KR flow, 
	\[ \dot\phi(t) = u_{\tau^*g(t)} + c(t), \]
	where $c(t)$ is a time dependent constant. So if $s <
	(4K)^{-1}$ we have
	\[  \begin{gathered}
	\sup\phi(s) > \sup\phi(0) - Ks + A> \sup\phi(0) - \frac{1}{4} +A \\
	\inf\phi(s) < \inf\phi(0) + Ks + A< \inf\phi(0) + \frac{1}{4} +A, 
	\end{gathered} \]
	for some constant $A$ (the integral of $c(t)$), and so 
	\[ \mathrm{osc}\,\phi(s) > \mathrm{osc}\,\phi(0) - \frac{1}{2}
		\geqslant \frac{1}{2}. 
	\]
	Since this is true for any $\tau\in G$, by taking infimum we have
	\[ d(g(s)) \geqslant \frac{1}{2}. \]
	For the smoothing result, we first choose a $\tau\in G$ such
	that
	\[ \tau^*g = \tKE + \ddb\phi, \]
	with $\osc\phi < K$. Then we can use the smoothing theorem,
	applied to the flow $\tau^*g(t)$ with initial metric $\tau^*g$,
	and choose $s$ even smaller than we did in the previous step
	if necessary. 
\end{proof}

\begin{prop}\label{prop:no escape}
	Fix $K > 0$. There
	exists a $c > 0$ depending on $K$, such that if 
	\[ \begin{gathered}
		\mu(g) > \Lambda - c,\\
		d(g), \osc u_{g(t)} < K,
	\end{gathered}\]
	for all time $t$ along the twisted KR flow $g(t)$, then $d(g) < 1$. 
\end{prop}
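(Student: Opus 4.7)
The plan is to argue by contradiction, extracting a limiting metric from a hypothetical counterexample and reaching a contradiction with the characterization of twisted K\"ahler-Einstein metrics by $\mu$.

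Suppose the conclusion fails. Then there is a sequence of initial metrics $g_k$ with $\mu(g_k) > \Lambda - 1/k$ satisfying the hypotheses, but with $d(g_k) \geqslant 1$. Since also $d(g_k) < K$, we are exactly in the hypotheses of Lemma~\ref{lem: short time d} applied to each $g_k$. This produces a uniform time $s>0$ (depending only on $K$) and biholomorphisms $\tau_k\in G$ such that $d(g_k(s))\geqslant 1/2$, together with uniform bounds
\[ C^{-1}\tKE < \tau_k^*g_k(s) < C\tKE, \qquad \Vert \tau_k^*g_k(s)\Vert_{C^3} < C, \]
with $C^3$ measured relative to $\tKE$. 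After extracting a subsequence, $\tau_k^*g_k(s)$ converges in $C^{2,\alpha}$, and in fact in $C^3$ after applying standard Shi-type derivative estimates to the twisted K\"ahler-Ricci flow past the smoothing time, to some limit metric $g_\infty$.

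The key observation is then that $\mu$ approaches its maximum on this subsequence. By the monotonicity of $\mu$ along the tKRF (property 2 in the list above Lemma~\ref{lem: short time d}) and the fact that $\mu$ is invariant under diffeomorphisms, we have
\[ \Lambda \geqslant \mu(\tau_k^*g_k(s)) = \mu(g_k(s)) \geqslant \mu(g_k) > \Lambda - 1/k. \]
Passing to the limit and using the continuity of $\mu$ in $C^2$ (Proposition~\ref{mu properties}({\it ii})) gives $\mu(g_\infty) = \Lambda$. By the characterization in Corollary~\ref{cor:mu max} (property 3), $g_\infty$ is a twisted K\"ahler-Einstein metric, so by Berndtsson's uniqueness theorem $g_\infty = \sigma^*\tKE$ for some $\sigma\in G$. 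In particular, $d(g_\infty) = 0$.

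On the other hand, $d$ is manifestly invariant under the action of $G$ (by reparametrizing the infimum defining it), so $d(\tau_k^*g_k(s)) = d(g_k(s)) \geqslant 1/2$. By the continuity of $d$ in the $C^3$ topology (Lemma~\ref{lem: d cont}), this forces $d(g_\infty)\geqslant 1/2$, contradicting $d(g_\infty)=0$. The main technical obstacle is ensuring enough regularity to apply both continuity statements simultaneously; this is handled by the smoothing Theorem~\ref{thm:smoothing} combined with parabolic regularity past time $s$, which upgrades the $C^3$ bound to convergence strong enough for both $\mu$ and $d$.
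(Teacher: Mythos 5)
Your proof is correct and follows the same skeleton as the paper's through the construction of the limit $g_\infty$, the identity $\mu(g_\infty)=\Lambda$, and the identification of $g_\infty$ as a twisted K\"ahler-Einstein metric. The point where you diverge is the final contradiction. The paper does \emph{not} invoke Lemma~\ref{lem: d cont}: instead, observing that $g_\infty=\tau^*\tKE$ is uniformly equivalent to $\tKE$, it pushes the $C^{2,\alpha}$ convergence forward by $\tau^{-1}$ to get $(\tau^{-1})^*\tau_k^*g^k(s)\to\tKE$ in $C^{2,\alpha}(\tKE)$, writes $(\tau^{-1})^*\tau_k^*g^k(s)=\tKE+i\partial\overline{\partial}\phi_k(s)$ with $\phi_k(s)$ of zero mean, and by elliptic regularity concludes $\phi_k(s)\to 0$ in $C^{4,\alpha}$; since $\osc\phi_k(s)\geqslant d(g^k(s))\geqslant\tfrac{1}{2}$, this is immediately absurd. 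That route only needs the $C^{2,\alpha}$ convergence that Arzel\`a--Ascoli already hands you from the uniform $C^3$ bounds of Lemma~\ref{lem: short time d}. Your route, via the $G$-invariance of $d$ and the continuity Lemma~\ref{lem: d cont}, needs convergence in $C^3$, hence genuinely one more derivative than the extraction directly gives. You correctly flag this as the technical obstacle and propose Shi-type parabolic estimates to cure it; that does work, but needs a small amount of care that your sketch elides: Shi/parabolic regularity improves regularity at \emph{later} times, so one should either take the smoothing time a bit smaller and run the estimates on a short interval ending at $s$, or pass to a slightly later time $s'<(4K)^{-1}$ (so that the lower bound $\osc\phi(s')\geqslant\osc\phi(0)-2Ks'$ from the proof of Lemma~\ref{lem: short time d} is still $\geqslant\tfrac{1}{4}$, say). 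Net effect: your argument is sound but requires an extra regularity bootstrap that the paper avoids by comparing potentials directly in $C^{4,\alpha}$; the paper's version is the more economical of the two, while yours has the modular appeal of reusing the continuity of $d$.
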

\begin{proof}
	We argue by contradiction. Suppose there is a $K > 0$
	for which there is no suitable $c$. This means that
	we can choose a sequence $g^k$ for which
	\begin{equation}\label{eq1}
		\mu(g^k) > \Lambda - 1/k, 
	\end{equation}
	and $d(g^k), \osc u_{g_k(t)} < K$ for all $t$, but
	$d(g^k) \geqslant 1$.

	We apply Lemma ~\ref{lem: short time d}. We get $s, C > 0$, and a $\tau_k\in G$
	such that
	\begin{equation}\label{1}
		\begin{gathered}
		d(g^k(s)) \geqslant \frac{1}{2} \\
		C^{-1}\tKE < \tau_k^*g^k(s) < C\tKE,\\
		\Vert \tau_k^*g^k(s)\Vert_{C^3(\tKE)} < C,\\ 
		\mu(\tau_k^*g^k(s)) = 
		\mu(g^k(s)) \geqslant \mu(g^k) > \Lambda - 1/k,
		\end{gathered}
	\end{equation}
	where we also used the monotonicity of $\mu$ along the twisted
	KR flow.
	We can choose a subsequence of the $\tau_k^*g^k(s)$ converging to some
	$g_\infty$ in $C^{2,\alpha}$ and in particular
	$\mu(g_{\infty}) = \Lambda$, so $g_\infty = \tau^*\tKE$ for
	some $\tau\in G$.  The metric $g_\infty$ is uniformly equivalent to
	$\tKE$ (we can use the constant $C$ given by the bounds
	\eqref{1}) and so 
	\[\begin{aligned}
		\Vert (\tau^{-1})^*\tau_k^*g^k(s) -
		\tKE\Vert_{C^{2,\alpha}(\tKE)} &= 
		\Vert (\tau^{-1})^*\tau_k^*g^k(s) -
		(\tau^{-1})^*g_\infty\Vert_{C^{2,\alpha}(\tKE)} \\
		&= \Vert
		\tau_k^*g^k(s) - g_\infty\Vert_{C^{2,\alpha}(\tau^*\tKE)} \to 0.
	\end{aligned}\]
	Writing
	\[ (\tau^{-1})^*\tau_k^*g^k(s) = \tKE +
		i\partial\overline{\partial}\phi_k(s)\] 
	with $\phi_k(s)$ normalized to have
	zero mean, we obtain
	\[ \phi_k(s) \to 0\,\text{ in }C^{4,\alpha}(\tKE), \]
	which contradicts 
	\[ \mathrm{osc}(\phi_k(s)) \geqslant d(g^k(s)) \geqslant
		\frac{1}{2}.\] 
\end{proof}

Next we prove a stability result for the twisted K\"ahler-Ricci flow.  In the untwisted case, a similar result 
was proved by Sun-Wang~\cite{SW}, using different techniques.
\begin{prop}\label{prop:stability}
	There is a $\delta > 0$ such that if $\Vert g -
	\tKE\Vert_{C^3(\tKE)}< \delta$, then the twisted K\"ahler-Ricci
	flow starting with $g$ converges to a twisted 
	K\"ahler-Einstein metric. 
\end{prop}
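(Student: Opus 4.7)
The plan is to apply Proposition~\ref{prop:no escape} \emph{at every time} along the flow, and conclude by a connectedness argument on the function $t \mapsto d(g(t))$.

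First I would fix $K = 2$ (any value $K > 1$ will do) and let $c = c(K) > 0$ be the constant furnished by Proposition~\ref{prop:no escape}. I will then choose $\delta$ small enough so that $\Vert g - \tKE \Vert_{C^3(\tKE)} < \delta$ implies
\[
d(g) < 1, \qquad \mu(g) > \Lambda - c, \qquad \osc u_{g(t)} < K \text{ for all } t \geqslant 0.
\]
The first bound follows from Lemma~\ref{lem: d cont} together with $d(\tKE)=0$; the second from the stated continuity of $\mu$ in $C^3$ and $\mu(\tKE)=\Lambda$; and the third from Theorem~\ref{thm:pman}, using that its constant depends continuously on the $C^3$-norm of the initial data and vanishes at $g_0 = \tKE$ (since the flow from $\tKE$ is stationary with $u\equiv 0$). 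Monotonicity of $\mu$ then upgrades the second bound to $\mu(g(t)) > \Lambda - c$ for every $t \geqslant 0$.

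The key observation is that Proposition~\ref{prop:no escape} may be applied at each time $T \geqslant 0$, with the time-translate $g(T+\cdot)$ playing the role of the flow and $g(T)$ the role of the initial metric. Its hypotheses $\mu(g(T)) > \Lambda - c$ and $\osc u_{g(T+s)} < K$ for all $s\geqslant 0$ are automatic from the previous paragraph, so whenever $d(g(T)) < K$ holds, the proposition forces $d(g(T)) < 1$. Equivalently, for every $T \geqslant 0$ we have
\[
d(g(T)) \in [0,1) \cup [K, \infty).
\]

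By Lemma~\ref{lem: d cont} together with standard smoothness of the flow in $C^3$, the map $t \mapsto d(g(t))$ is continuous. Since $d(g(0)) < 1$ and the target set $[0,1)\cup[K,\infty)$ is disconnected (the gap $[1,K)$ being nonempty because $K>1$), continuity forces $d(g(t)) < 1$ for all $t \geqslant 0$. Theorem~\ref{thm:PSSW} then yields exponential convergence of $g(t)$ to a twisted K\"ahler-Einstein metric. The most delicate step will be securing the uniform bound $\osc u_{g(t)} < K$ along the entire flow simultaneously with the smallness of $\mu(g) - \Lambda$; this is exactly where the \emph{continuous} dependence in Theorem~\ref{thm:pman} on the $C^3$-norm of the initial data is essential, since we need the Perelman constant to be small whenever $g$ is close to $\tKE$.
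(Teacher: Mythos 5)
Your overall strategy matches the paper's: combine the monotonicity of $\mu$ with Proposition~\ref{prop:no escape} applied at every time along the flow, and then exploit the resulting dichotomy $d(g(t))\in[0,1)\cup[K,\infty)$ together with continuity of $t\mapsto d(g(t))$ to keep $d$ below $1$. Your version makes the connectedness step more explicit than the paper's one-line parenthetical remark, which is a genuine clarity improvement.

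However, there is a real gap in the setup. You fix $K=2$ and claim that $\osc u_{g(t)}<K$ for all $t$ can be arranged by shrinking $\delta$, on the grounds that the constant in Theorem~\ref{thm:pman} ``depends continuously on the $C^3$-norm of the initial data and vanishes at $g_0=\tKE$.'' The second half of this is false. Theorem~\ref{thm:pman} produces an \emph{upper bound} $C(g_0)$ with $\sup_t(|u|+|\nabla u|+|\Delta u|)\leqslant C(g_0)$, where $C$ is continuous in $g_0$; but $C(\tKE)$ is not zero. Tracing the proof, $C$ involves $A(\rho)=\inf_\tau\mu(g_0,\tau)$, the Sobolev constant, $\max(R-\Tr\alpha)^-(0)$, and so on, none of which vanish at $g_0=\tKE$. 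The fact that the flow starting at $\tKE$ is stationary with $u\equiv 0$ tells you the \emph{actual} oscillation is zero at $\tKE$, not that the \emph{a priori bound} from the theorem is small near $\tKE$. And one cannot replace the theorem's bound with a direct smallness argument: the evolution $\ddt u=\Delta u+u-c(t)$ has the destabilizing $+u$ term, so $\osc u$ can grow before Perelman's (large, fixed) bound caps it; nothing forces it to remain below $2$.

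The fix is the paper's: do not insist on small $K$. Choose $K$ \emph{large}, say large enough that Perelman's estimate guarantees $\osc u_{g(t)}<K$ for every initial metric $g$ with $\Vert g-\tKE\Vert_{C^3(\tKE)}<\tfrac12$ (and also $d(g)<K$ on this ball). Then let $c=c(K)$ from Proposition~\ref{prop:no escape}, and only afterwards shrink $\delta$ so that $\Vert g-\tKE\Vert_{C^3(\tKE)}<\delta$ forces $\mu(g)>\Lambda-c$ and $d(g)<1$. With that reordering of the quantifiers, your continuity/connectedness argument goes through unchanged and the statement ``any $K>1$ will do'' should be dropped.
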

\begin{proof}
	We can choose a constant $K > 1$ such that if 
	$\Vert g - \tKE\Vert_{C^3(\tKE)} < \frac{1}{2}$, 
	then $d(g) < K$, and also 
	by Perelman's estimate $\osc u_{g(t)} <K$ for all $t$.  
	Let $c=c(K)$ be the constant given by Proposition~\ref{prop:no escape}. 
	We can now choose $\delta < 1$ sufficiently small
	so that
	\[ \Vert g-\tKE\Vert_{C^3(\tKE)} < \delta \]
	implies that $\mu(g) > \Lambda - c$. 
	By the monotonicity we have
	\[ \mu(g(t)) > \Lambda - c \]
	for all $t > 0$. The previous Proposition implies that $d(g) < 1$ and
	then also $d(g(t)) < 1$
	for all $t > 0$ (since at any time if $d(g(t)) = 1$, then Proposition~\ref{prop:no escape}
	 says $d(g(t)) < 1$). Theorem \ref{thm:PSSW} implies
	that $g(t)$ converges to a twisted KE metric. 
\end{proof}

Let $S$ be the set of $C^{5}$ metrics $g$ in the class $[\tKE]$ such that the
twisted KR flow starting with $g$ converges to a twisted KE metric in $C^{5}(\tKE)$. 
Our goal is to prove that $S$ is both open and closed.  We claim that this implies our main theorem.
To see this, observe that the set of $C^{5}$ metrics  in the class $[\tKE]$ is convex, and hence connected.  
It follows that either $S$ contains all metrics,
or $S$ is empty.  But $\tKE \in S$, and hence Theorem~\ref{thm:main} follows.
\begin{prop}
	$S$ is open in the $C^5$ topology. 
\end{prop}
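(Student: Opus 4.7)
The plan is to combine three ingredients: (i) by definition of $S$, the flow $g_0(t)$ converges in $C^5(\tKE)$ to some twisted K\"ahler-Einstein metric $g_\infty$; (ii) on a fixed compact time interval, the twisted K\"ahler-Ricci flow depends continuously on initial data in H\"older norms; and (iii) the stability result Proposition~\ref{prop:stability}, which by $G$-equivariance can be applied around any twisted K\"ahler-Einstein metric, not only $\tKE$.

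I would fix $g_0 \in S$, with flow $g_0(t) \to g_\infty = \tau^*\tKE$ for some $\tau\in G$ (by Berndtsson's uniqueness). Let $\delta>0$ be the stability radius from Proposition~\ref{prop:stability} applied around $g_\infty$. First I would choose $T$ so large that $\Vert g_0(T) - g_\infty \Vert_{C^3(\tKE)} < \delta/2$, which is possible since $g_0(t)\to g_\infty$ in $C^5(\tKE)$ and this is stronger than $C^3$. Then I would invoke continuous dependence of the twisted KR flow on initial data over $[0,T]$ --- a standard consequence of parabolic theory applied to the quasilinear Monge-Amp\`ere equation \eqref{eq:ddtphi} --- to find $\epsilon>0$ such that $\Vert g - g_0 \Vert_{C^5(\tKE)} < \epsilon$ implies $\Vert g(T) - g_0(T) \Vert_{C^3(\tKE)} < \delta/2$, and hence $\Vert g(T) - g_\infty \Vert_{C^3(\tKE)} < \delta$.

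Finally, Proposition~\ref{prop:stability} applied around $g_\infty$ will show that the time-translated flow starting from $g(T)$, which by uniqueness coincides with $g(t)$ for $t\geq T$, converges to a twisted K\"ahler-Einstein metric; the convergence is exponential by Theorem~\ref{thm:PSSW}, and parabolic bootstrapping upgrades it to $C^5(\tKE)$ (indeed to $C^\infty$). This gives $g\in S$ and so openness of $S$ at $g_0$. The main technical input is continuous dependence of the flow on initial data over a bounded time interval in the mixed $C^5\to C^3$ form used above, which is routine for a smooth quasilinear parabolic equation. The only subtle point is to check that the stability result Proposition~\ref{prop:stability} transfers from $\tKE$ to the actual limit $g_\infty = \tau^*\tKE$ via the $G$-action, but this is immediate from the $G$-equivariance of the twisted K\"ahler-Ricci flow: pulling back by $\tau^{-1}$ reduces the situation verbatim to the case $g_\infty = \tKE$ (with $C^5(\tKE)$-neighborhoods mapped to $C^5(\tKE)$-neighborhoods under a uniformly equivalent norm).
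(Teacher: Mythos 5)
Your proposal is correct and follows essentially the same route as the paper: pick $T$ large so the flow from $g_0$ is within $\delta/2$ of a twisted KE metric, use continuous dependence on initial data over $[0,T]$ to keep nearby flows within $\delta$, then invoke Proposition~\ref{prop:stability}. The only cosmetic difference is that the paper pulls back by the fixed $\tau\in G$ at the outset (working near $\tKE$ directly), while you apply the stability result around $g_\infty=\tau^*\tKE$ and justify this by $G$-equivariance; these are interchangeable since $\tau$ is a single fixed diffeomorphism, so the relevant $C^3$ and $C^5$ norms are uniformly equivalent.
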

\begin{proof}
	Suppose that $g\in S$. Then for sufficiently large $T$ there exists
	a $\tau\in G$ such that
	$\Vert \tau^*g(T) - \tKE\Vert_{C^3(\tKE)} < \delta/ 2$ 
	with the $\delta$ from the previous result.
	For a finite time $t\in [0,T]$ the solution of the twisted KR flow depends
	smoothly on the initial data, so, since $\tau \in G$ is fixed, we can choose $c$ small
	so that if $\Vert h-g\Vert_{C^{5}(\tKE)} < c$, then $\Vert
	\tau^*h(T)-\tau^*g(T)\Vert_{C^{3}(\tKE)} < \delta/2$. Then 
	\[ \Vert \tau^*h(T) - \tKE\Vert_{C^3(\tKE)} < \delta, \]
	so the stability result implies that the flow starting with
	$\tau^*h(T)$, and hence also the flow starting with $h$, 
	converges to a twisted K\"ahler-Einstein metric. 
\end{proof}

\begin{prop}\label{prop:closed}
	$S$ is closed in $C^5$. 
\end{prop}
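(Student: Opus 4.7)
The plan is to reduce the statement to showing that $d(g_\infty(t))$ remains bounded along the twisted K\"ahler-Ricci flow starting at $g_\infty$; once this is established, Theorem~\ref{thm:PSSW} immediately gives convergence of $g_\infty(t)$ to a twisted K\"ahler-Einstein metric, so $g_\infty \in S$. I prove the bound by contradiction: suppose there is a sequence $t_j \to \infty$ with $d(g_\infty(t_j)) \to \infty$. By Corollary~\ref{cor:mu max}, $\mu(g_\infty(t_j)) \to \Lambda$ along this sequence.

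The hypothesis $g_k \in S$ enters through a diagonal argument. By standard continuous dependence of the parabolic twisted K\"ahler-Ricci flow on its initial data over the finite interval $[0,t_j]$, together with continuity of $d$ (Lemma~\ref{lem: d cont}) and of $\mu$ (Proposition~\ref{mu properties}), I can choose $k_j \to \infty$ so that $d(g_{k_j}(t_j)) \geq \tfrac{1}{2}d(g_\infty(t_j)) \to \infty$ and $\mu(g_{k_j}(t_j)) \to \Lambda$. Since each $g_{k_j} \in S$, the flow $g_{k_j}(s)$ converges in $C^5(\tKE)$ to some twisted K\"ahler-Einstein metric, and in particular $d(g_{k_j}(s)) \to 0$ as $s \to \infty$. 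By continuity of $d$ along the smooth flow and the intermediate value theorem, there exists $s_j > t_j$ with $d(g_{k_j}(s_j)) = 1$; monotonicity of $\mu$ gives $\mu(g_{k_j}(s_j)) \to \Lambda$ as well.

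Next I apply Lemma~\ref{lem: short time d} at $g_{k_j}(s_j)$, with $K$ chosen to exceed both $1$ and a uniform Perelman bound on $\osc u$ (uniform in $j$ because $g_k \to g_\infty$ in $C^5$ gives a common $C^3$-bound, and Theorem~\ref{thm:pman} asserts its constants depend continuously on precisely this $C^3$-norm). The lemma produces uniform constants $s^*, C > 0$ and biholomorphisms $\tau_j \in G$ such that $\|\tau_j^*g_{k_j}(s_j + s^*)\|_{C^3(\tKE)} < C$ and $d(g_{k_j}(s_j + s^*)) \geq \tfrac{1}{2}$. By $G$-invariance and monotonicity, $\mu(\tau_j^*g_{k_j}(s_j+s^*)) \to \Lambda$. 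By Arzela--Ascoli a subsequence converges in $C^{2,\alpha}$ to some $g^*$, and continuity of $\mu$ in $C^2$ forces $\mu(g^*) = \Lambda$; Corollary~\ref{cor:mu max} then identifies $g^* = \sigma^*\tKE$ for some $\sigma \in G$. Writing
\begin{equation*}
(\sigma^{-1})^*\tau_j^*g_{k_j}(s_j + s^*) = \tKE + \ddb\phi_j
\end{equation*}
with $\phi_j$ normalised to have zero mean, elliptic regularity yields $\phi_j \to 0$ in $C^{4,\alpha}(\tKE)$, so $\osc\phi_j \to 0$. But $d$ is manifestly $G$-invariant, whence $\osc\phi_j \geq d(g_{k_j}(s_j+s^*)) \geq \tfrac{1}{2}$, a contradiction.

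The main obstacle is ensuring that the analytic constants used in the compactness step --- the Perelman-type bound of Theorem~\ref{thm:pman} and the smoothing/translation constants of Lemma~\ref{lem: short time d} --- are uniform across the diagonal sequence $g_{k_j}$, rather than merely valid for each $g_{k_j}$ separately. This uniformity is exactly what the effective bounds established in Sections~\ref{sec: w func} and~\ref{sec: Perelman} were designed to provide, and it is the reason the paper takes care to track how every Perelman-style constant depends on the $C^3$-norm of the initial metric.
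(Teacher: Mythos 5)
Your proof is correct, but it takes a genuinely different route from the paper's. The paper proves closedness forwards: it fixes a uniform $K$ bounding $\osc u_{g_k(t)}$, chooses $T$ large enough that $\mu(g(T))>\Lambda-c$, uses continuous dependence and $C^3$-continuity of $\mu$ to get $\mu(g_k(T))>\Lambda-c$ for $k$ large, and then invokes Proposition~\ref{prop:no escape} as a \emph{black box} (together with the fact that $d(g_k(t))\to 0$ as $t\to\infty$ for each fixed $k\in S$) to show $d(g_k(t))<1$ for all $t\geq T$ and $k$ large; passing to the limit and applying Theorem~\ref{thm:PSSW} finishes. Your proof instead argues by contradiction, assuming $d(g_\infty(t))$ is unbounded, and never cites Proposition~\ref{prop:no escape}; rather, after a diagonal argument and the intermediate value theorem place you at times $s_j$ with $d(g_{k_j}(s_j))=1$ and $\mu$ close to $\Lambda$, you essentially reproduce the compactness argument of Proposition~\ref{prop:no escape} inline (smoothing via Lemma~\ref{lem: short time d}, Arzel\`a--Ascoli, $\mu(g^*)=\Lambda\Rightarrow g^*=\sigma^*\tKE$, $\osc\phi_j\to 0$, contradiction with $d\geq 1/2$). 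Both proofs rest on the same core inputs: the uniform Perelman bound of Theorem~\ref{thm:pman} with its explicit $C^3$-dependence, Lemma~\ref{lem: short time d}, $C^2$-continuity of $\mu$ and $d$, monotonicity of $\mu$ with limit $\Lambda$, and the identification of twisted K\"ahler--Einstein metrics as $\mu$-maximizers. The paper's version is more modular --- Proposition~\ref{prop:no escape} also drives the stability result --- and yields the sharper statement $d(g(t))\leq 1$ for $t\geq T$, whereas yours is more self-contained but duplicates an argument the paper has already packaged. Two small points: your deduction of $\phi_j\to 0$ in $C^{4,\alpha}$ from $C^{2,\alpha}$-convergence of the metrics is fine (tracing against $\tKE$ gives $\Delta_{\tKE}\phi_j\to 0$ in $C^{2,\alpha}$, then Schauder), and applying Corollary~\ref{cor:mu max} to the $C^{2,\alpha}$ limit $g^*$ implicitly requires a bootstrap, but the paper's own proof of Proposition~\ref{prop:no escape} has exactly the same feature.
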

\begin{proof}
	Suppose that $g_k\in S$, and
	\[ g_k \to g \text{ in } C^5. \]
	By Perelman's estimate, we can choose a $K >2$ such that
	$\osc u_{g_k(t)} < K$ for all $k, t$ (using that the $g_k$ are in a
	bounded set of metrics in $C^5$). Let $c$ be the constant given
	by Proposition~\ref{prop:no escape} corresponding to $K$. By the properties of
	$\mu$, there exists a $T$ such that
	\[ \mu(g(T)) > \Lambda - c. \]
	Since the tKRF is stable for finite time, and $\mu$ is continuous for 
	a $C^{3}$ family
	of metrics, there exists an $N$ such that 
	\[ \mu(g_k(T)) > \Lambda - c\]
	for all $k > N$. By monotonicity, it follows that
	\[ \mu(g_k(t)) > \Lambda - c\]
	for all $k > N$ and $t \geqslant T$. Since we know that $g_k(t)$
	converges to a twisted KE metric $\tau_k^*\tKE$ for some $\tau_k\in G$,
	it follows that $(\tau_{k}^{-1})^*g_k(t)$ converges to $\tKE$. Now, since 
	$(\tau_{k}^{-1})^*g_k(t)$ converges to $\tKE$
	we can apply Lemma~\ref{lem: d cont} to obtain that $d((\tau_{k}^{-1})^*g_k(t)) = 
	d(g_k(t))$ converges to $d(\tKE)=0$.  In particular,  
	there must be a first time $t_k\geqslant T$ for which 
	\[ d(g_k(t_k)) \leqslant K/2. \]
	By Proposition~\ref{prop:no escape}, at this time we have $d(g_k(t_k)) < 1$, so
	since $K > 2$ and $t_k\geqslant T$ was chosen to be minimal, 
	$t_k=T$. But then as before, we have $d(g_k(t)) < 1$ 
	for all $t > T$ and $k >
	N$. For any fixed $t$ we have $g_k(t)\to g(t)$ in $C^3(\tKE)$ 
	as $k\to\infty$, and so apply Lemma~\ref{lem: d cont} again, we obtain
	\[ d(g(t)) \leqslant 1\]
	for all $t \geqslant T$. Theorem \ref{thm:PSSW} then implies
	that $g(t)$ converges to a twisted KE metric. 
\end{proof}

\end{document}